\newtheorem{theorem}{Theorem}[section]
\newtheorem{corollary}[theorem]{Corollary}
\newtheorem{definition}[theorem]{Definition}
\newtheorem{lemma}[theorem]{Lemma}
\newtheorem{proposition}[theorem]{Proposition}
\newtheorem{remark}[theorem]{Remark}
\newtheorem{hypothesis}[theorem]{Hypotheses}
\newenvironment{proof}[1][Proof]{\noindent\textbf{#1.} }{\ \rule{0.5em}{0.5em}}
\numberwithin{equation}{section}
\newcommand{\N}{\mathbb{N}}
\newcommand{\Z}{\mathbb{Z}}
\newcommand{\R}{\mathbb{R}}
\newcommand{\hR}{\,\!^\ast\R}
\renewcommand{\Lambda}{\mathbb{X}}
\newcommand{\dt}{\varepsilon_t}
\newcommand{\dx}{\varepsilon}
\newcommand{\x}{x}
\newcommand{\ns}[1]{\,\!^\ast#1} 
\newcommand{\sh}[1]{\,\!^\circ#1} 
\newcommand{\D}{\mathbb{D}}
\newcommand{\vsum}[1]{||#1||_1}
\newcommand{\test}{\mathscr{D}_\Lambda}
\newcommand{\tests}{\mathscr{D}}
\newcommand{\grid}[1]{\mathbb{G}({#1})}
\newcommand{\dom}{\Omega_{\Lambda}}
\newcommand{\simplex}{\mathbb{S}}
\newcommand{\steady}{\widetilde{u}}
\newcommand{\eq}{\steady_h}
\newcommand{\stdiv}{\mathrm{div}}
\renewcommand{\div}{\stdiv_{\Lambda}}
\newcommand{\stgrad}{\nabla}
\newcommand{\grad}{\nabla_\Lambda}
\newcommand{\lap}{\Delta_\Lambda}
\renewcommand{\sim}{\approx}
\newcommand{\stable}{$\ns{}$stable}
\newcommand{\unstable}{$\ns{}$unstable}
\newcommand{\attractive}{$\ns{}$attractive}
\newcommand{\B}{\test'({\dom})}
\newcommand{\fin}{\hR_{fin}}
\renewcommand{\Lambda}{\mathbb{X}}
\newcommand{\leb}{\mu_L}
\newcommand{\supp}{\mathrm{supp\,}}
\newcommand{\normal}{\hat{n}}
\renewcommand{\sim}{\approx}
\newcommand{\norm}[1]{\left\lVert#1\right\rVert}
\newcommand{\bcf}{C^0_b}
\newcommand{\rad}{\mathbb{M}}
\newcommand{\prob}{\rad^{\mathbb{P}}}
\newcommand{\ldual}{\langle}
\newcommand{\rdual}{\rangle_{\tests'(\Omega)}}
\newcommand{\lp}[1]{\widehat{#1}}
\begin{document}

\begin{frontmatter}
	
	\title{A grid function formulation of a class of ill-posed parabolic equations}

	\author{Emanuele Bottazzi}
	\address{University of Pavia, Italy}
	\ead[url]{https://sites.google.com/view/emanuele-bottazzi/home}
	
	
	
	\begin{abstract}
		We study an ill-posed forward-backward parabolic equation
		with techniques of nonstandard analysis.
		Equations of this form arise in many applications, ranging from the description of phase transitions, to the dynamics of aggregating populations, and to	image enhancing.
		The equation is ill-posed in the sense that it only has measure-valued solutions that in general are not unique.
		By using grid functions of nonstandard analysis, we derive a continuous-in-time and discrete-in-space formulation for the ill-posed problem. This nonstandard formulation is well-posed and formally equivalent to the classical PDE, and has a unique grid solution that satisfies the properties of an entropy measure-valued solution for the original problem.
		By exploiting the strength of the nonstandard formulation, we are able to characterize the asymptotic behaviour of the grid solutions and to prove that they satisfy a conjecture formulated by Smarrazzo for the measure-valued solutions to the ill-posed problem.
	\end{abstract}
	
	\begin{keyword}
		forward-backward parabolic equations\sep ill-posed PDEs\sep nonlinear PDEs\sep measure-valued solutions\sep nonstandard analysis
		\MSC[2010] 46F30\sep 35K55\sep 47J06
	\end{keyword}
	
\end{frontmatter}

\section{Introduction}\label{section illposed}

In this paper, we will discuss a grid function formulation of the Neumann initial value problem
\begin{equation}\label{mark}
\left\{
\begin{array}{l}
\partial_t u=\Delta \phi(u) \text{ in } \Omega\\
\frac{\partial \phi(u)}{\partial \normal} = 0 \text{ in } [0,T] \times \partial\Omega\\
u(0,x) = u_0(x)
\end{array}\right.
\end{equation}
where $\Omega \subseteq \R^k$ is an open, bounded, and connected set with a smooth boundary, and $\normal$ is its unit exterior normal vector; 
where $u_0 \in L^\infty(\Omega)$ satisfies $u_0(x) \geq 0$ for all $x \in \Omega$; and where the following hypotheses over $\phi$ are assumed:
\begin{hypothesis}\label{ipotesi di partenza}
	$\phi:\R\rightarrow\R$ satisfies:
	\begin{itemize}
		\item $\phi \in C^1(\R)$;
		\item $\phi(x) \geq 0$ for all $x \geq 0$ and $\phi(0) = 0$;
		\item $\phi$ is non-monotone, i.e.\ there are $u^-, u^+$ with $0 < u^- < u^+ \leq +\infty$ such that $\phi'(u) > 0$ if $u \in (0,u^-)\cup(u^+,+\infty)$ and $\phi'(u) < 0$ for $u \in (u^-,u^+)$;
		\item if $u^+ = +\infty$, then $\lim_{x \rightarrow +\infty} \phi(x) =0$.
	\end{itemize}
\end{hypothesis}

Forward-backward parabolic equations like \ref{mark} or the closely related
\begin{equation} \label{mark II}
u_t = \stdiv \phi(\stgrad u)
\end{equation}
with non-monotone $\phi$ have been used to describe various physical phenomena.
Cubic-like functions with $u^+ < +\infty$ arise for instance in models of phase transitions: in this context, the function $u$ represents the enthalpy and $\phi(u)$ the temperature distribution.
Equation \ref{mark} can be deduced as a consequence of the Fourier law.
If $u^+ = +\infty$, then equation \ref{mark} has been used in models of the dynamics of aggregating populations both in a discrete approximation (see for instance Horstmann, Othmer and Painter \cite{reinforcement} and Lizana and Padron \cite{sd}) and as a continuous diffusion approximation \cite{sobolev padron}.
Equation \ref{mark II} has been used to describe shearing of granular media (see Witelski, Shaeffer and Shearer \cite{discrete 2}); it is also noteworthy to mention that the Perona-Malik edge-enhancement algorithm via backward diffusion \cite{perona malik} is based on equation \ref{mark II}.

It is well-known that, if $u_0 \in L^{\infty}(\Omega)$ and $\mathrm{ess}\sup \phi(u) \leq u^-$ or $\mathrm{ess}\inf \phi(u) \geq u^+$, then the dynamics described by equation \ref{mark} amount to a parabolic smoothing.
The main feature of problem \ref{mark} is that it is ill–posed forward in time for $u$ in the interval $(u^-, u^+)$:
under this assumption, there are no weak solutions to problem \ref{mark}. 
%
%
However, under the hypothesis that $0 < u^+ < +\infty$, it is possible to find a solution to problem \ref{mark} in the sense of Young measures by studying the limit of a suitable regularized problem \cite{plotnikov}.
If $u^+ = +\infty$, then the problem has no solution also in the class of Young measures; however, a notion of measure-valued solution has been given by Smarrazzo \cite{smarrazzo} and can be characterized as the sum of a Young measure and a Radon measure.
In Section \ref{section meassol}, we will recall the definition of the measure-valued solutions for these problems.

Despite the different notions of measure-valued solutions, that depend upon the value of $u^+$, we will be able to give a uniform formulation for the class of ill-posed PDEs by using grid functions of nonstandard analysis.
In Section \ref{section gridfz} we will review the properties of grid functions that will be used throughout the paper, and in Section \ref{ns models} we will derive the grid function formulation of problem \ref{mark} from a discrete model of diffusion.
In Section \ref{coherence}, we will show the relations between the solutions to the grid function formulation and the solutions to the original problem \ref{mark}.
In particular, if the solutions to the nonstandard problem are regular enough, they induce solutions to problem \ref{mark} that are coherent with the approaches of Plotnikov and Smarrazzo.
In Section \ref{properties} we will discuss the asymptotic behaviour of the grid solutions to problem \ref{mark} by studying the asymptotic behaviour of the solution to the grid function formulation.
We will also give a positive answer to a conjecture by Smarrazzo on the coarsening of the solutions to problem \ref{mark} when $u^+=+\infty$.
The paper concludes with a brief discussion of some properties of the grid solution to problem \ref{mark} with Riemann initial data.
In particular, by studying this initial value problem, we will show that the grid solution to problem \ref{mark} features a hysteresis loop, in agreement with the behaviour of the Young measure solution.

\subsection{Notation}

If $f : \R \rightarrow \R$, we will denote the derivative of $f$ by $f'$ or $D f$.
If $f : [0,T]\times \Omega \rightarrow \R$, we will think of the first variable of $f$ as the time variable, denoted by $t$, and we will write $f_t$ for the derivative $\frac{\partial f}{\partial t}$.
We adopt the multi-index notation for partial derivatives and, if $\alpha$ is a multi-index, we will denote by $D^\alpha f$ the function
$$
D^\alpha f = \frac{\partial^{|\alpha|} f}{\partial x_1^{\alpha_1} \partial x_2^{\alpha_2}\ldots \partial x_k^{\alpha_k}}.
$$

We recall that, if $V$ is a real vector space, an element of $V'$ is a continuous linear functional $T : V\rightarrow\R$.
If $T \in V'$ and $\varphi \in V$, we will denote the action of $T$ over $\varphi$ by
$\langle T, \varphi\rangle_V$.

We will often reference the following real vector spaces:
\begin{itemize}
	\item $\bcf(\Omega) = \{ f \in C^0(\Omega) : f \text{ is bounded and } \lim_{|x|\rightarrow \infty} f(x) = 0 \}$.
	\item $C^l(\Omega) = \{ f \in C^0(\Omega) : f^\alpha \in C^0(\Omega)$ for any multi-index $\alpha$ with $\max{\alpha}\leq l\}$.
	\item $\tests(\Omega) = \{ f \in C^\infty(\Omega) : \supp f \subset \subset \Omega\}$.
	\item $\tests'(\Omega)$ is the space of real distributions over $\Omega$.
	Recall that, if $T \in \tests'(\Omega)$ and $\alpha$ is a multi-index, $D^\alpha T \in \tests'(\Omega)$ is defined by:
	$$
	\ldual D^\alpha T, \varphi\rdual = (-1)^{|\alpha|} \ldual T, D^\alpha \varphi\rdual.
	$$
	\item In the sequel, measurable will mean measurable with respect to $\leb$, the Lebesgue measure over $\R^n$.
	Consider the equivalence relation given by equality almost everywhere: two measurable functions $f$ and $g$ are equivalent if $\leb(\{x\in\Omega : f(x) \not = g(x)\})=0$.
	We will not distinguish between the function $f$ and its equivalence class, and we will say that $f = g$ whenever the functions $f$ and $g$ are equal almost everywhere.
	
	For all $1 \leq p < \infty$, $L^p(\Omega)$ is the set of equivalence classes of measurable functions $f: \Omega \rightarrow \R$ that satisfy
	$$
	\int_{\Omega} |f|^p dx < \infty.
	$$
	If $f \in L^p(\Omega)$, the $L^p$ norm of $f$ is defined by $$\norm{f}_p^p = \int_{\Omega} |f|^p dx.$$
	
	$L^\infty(\Omega)$ is the set of equivalence classes of measurable functions that are essentially bounded: we will say that $f: \Omega \rightarrow \R$ belongs to $L^\infty(\Omega)$ if there exists $y \in \R$ such that $\leb(\{x\in\Omega:|f(x)| > y\}) = 0$.
	In this case,
	$$\norm{f}_\infty = \inf\{y \in \R : \leb(\{x\in\Omega:f(x) > y\}) = 0 \}.$$
	\item The space $H^p(\Omega)$ is defined as
	$$H^p(\Omega) = \{ f \in L^2(\Omega) : D^\alpha f \in L^2(\Omega) \text{ for every multi-index } \alpha \text{ with }|\alpha|\leq p \}.$$
	We also consider the following norm over the space $H^p(\Omega)$:
	$$
	\norm{f}_{H^p} = \sum_{|\alpha|\leq p} \norm{D^\alpha f}_p,
	$$
	and we will call it the $H^p$ norm.
	For the properties of the spaces $H^p(\Omega)$, we refer to \cite{strichartz, tartar}.
	\item If $V(\Omega)$ is one of the above vector spaces, we define
	$$V(\Omega,\R^k) = \{ f: \Omega \rightarrow \R^k : f_i \in V(\Omega) \text{ for } i=1, \ldots,k \},$$
	where $f_i:\Omega\rightarrow\R$ is the $i$-th component of $f$.
	\item $\rad(\R) = \{ \nu : \nu \text{ is a Radon measure over } \R \text{ satisfying } |\nu|(\R)<+\infty \}$.
	\item $\prob(\R) = \{ \nu \in \rad(\R) : \nu \text{ is a probability measure}\}$.
\end{itemize}

Following \cite{balder, ball, webbym} and others, measurable functions $\nu : \Omega \rightarrow \prob(\R)$ will be called Young measures.
Measurable functions $\nu : \Omega \rightarrow \rad(\R)$ will be called parametrized measures, even though in the literature the term parametrized measure is used as a synonym for Young measure.
If $\nu$ is a parametrized measure and if $x \in \Omega$, we will write $\nu_x$ instead of $\nu(x)$.

\section{The measure-valued solutions to problem \ref{mark}}\label{section meassol}

In this section, we recall the notions of measure-valued solutions for problem \ref{mark} under the hypothesis that $u^+ < +\infty$ and under the hypothesis that $u^+ = +\infty$.
The most common approach to the study of problem \ref{mark} is to treat it as the limit of a sequence of approximating problems.

\subsection{The Young measure solution in the case $u^+ < +\infty$}

In \cite{plotnikov}, Plotnikov studied problem \ref{mark} 
by means of the following Sobolev regularization:
\begin{equation}\label{sobolev regularization}
\left\{
\begin{array}{l}
u_t = \Delta v \text{ in } \Omega\\
v = \phi(u)+\eta u_t\\
\frac{\partial v}{\partial \normal} = 0 \text{ in } [0,T] \times \partial\Omega\\
u(0,x) = u_0 \text{ in } \Omega.
\end{array}\right.
\end{equation}
with $\eta > 0$.
The Neumann initial-boundary value problem for this regularized problem under the hypothesis that $u^+ < +\infty$ has been studied by Novick-Cohen and Pego \cite{stable patterns}. 
%
%
In particular, Novick-Cohen and Pego proved that if $\phi$ is locally Lipschitz continuous and the initial data is $L^\infty(\Omega)$, then there exists a unique classical solution $(u_\eta,v_\eta)$ to problem \ref{sobolev regularization}, with $u_\eta \in C^1([0,T], L^\infty(\Omega))$ and $v_\eta = \phi(u_\eta)+\eta (u_\eta)_t$,
and the functions $(u_\eta, v_\eta)$ satisfy the inequality
\begin{equation}\label{entropy inequality classica}
\int_0^T \int_\Omega G(u_\eta) \varphi_t - g(v_\eta)\stgrad \varphi-g'(v_\eta)|\stgrad v_\eta|^2\varphi dx dt \geq 0
\end{equation}
for all non-decreasing $g \in C^1(\R)$ satisfying $G' = g$, and for all $\varphi \in \tests([0,T] \times \Omega)$ with $\varphi(t,x) \geq 0$ for all $(t,x) \in [0,T] \times \Omega$.
This inequality has the role of an entropy condition for the weak solutions of problem \ref{mark}, in a sense made precise by Evans \cite{evans survey} and by Mascia, Terracina and Tesei \cite{matete}.

Since the sequences $\{u_\eta\}_{\eta > 0}$ and $\{v_\eta\}_{\eta > 0}$ are uniformly bounded in $L^\infty([0,T] \times \Omega)$, they have a weak-$\star$ limit $(u,v)\in L^\infty([0,T] \times \Omega)$ that satisfies equation
\begin{equation}\label{pb limite}
\left\{
\begin{array}{l}
u_t = \Delta v \text{ in } \Omega\\
\frac{\partial v}{\partial \normal} = 0 \text{ in } [0,T] \times \partial\Omega\\
u(x) = u_0 \text{ in } \Omega
\end{array}\right.
\end{equation}
in the weak sense, i.e.\ $u \in L^\infty([0,T) \times \Omega)$, $v \in L^\infty([0,T) \times \Omega) \cap L^2([0,T],H^1(\Omega))$ such that
\begin{equation}\label{weak solution illposed}
\int_0^T \int_\Omega u \varphi_t - \nabla v \cdot \nabla \varphi dx dt + \int_{\Omega} u_0(x)\varphi(0,x) dx = 0
\end{equation}
for all $\varphi \in C^1([0,T] \times \overline{\Omega})$ with $\varphi(T,x) = 0$ for all $x \in \Omega$.
However, since in general weak-$\star$ convergence is not preserved by composition with a nonlinear function, we have no reason to expect that $v = \phi(u)$, so that the weak solution of \ref{pb limite} is not a weak solution of \ref{mark}.

Thanks to the uniform bound on the $L^\infty$ norm of $\{u_\eta\}_{\eta > 0}$, Plotnikov in \cite{plotnikov} showed that the sequence $\{u_\eta\}_{\eta > 0}$ has a limit point $\nu$ in the space of Young measures, and $\nu$ can be interpreted as a weak solution to equation \ref{mark}.
Moreover, $\nu$ can be characterized as a superposition of at most three Dirac measures concentrated at the three branches of $\phi$, and the Young measure $\nu$ and the function $v$ defined by $v = \int_{\R} \phi(\tau) d\nu$ still satisfy the entropy inequality \ref{entropy inequality classica}.
This analysis suggests the following definition of weak solution to problem \ref{mark} in the sense of Young measures.

\begin{definition}
	An entropy Young measure solution of problem \ref{mark} consists of funtions $u, v, \lambda_i \in L^\infty([0,T]\times\Omega)$, $1 \leq i \leq 3$, satisfying the conditions:
	\begin{enumerate}
		\item $\lambda_i \geq 0$, $\sum_{i = 1}^3 \lambda_i = 1$, with $\lambda_1(x) = 1$ if $v(x)< \phi(u^+)$, and with $\lambda_3(x) = 1$ if $v(x) > \phi(u^-)$;
		\item $v \in L^2([0,T], H^1(\Omega))$ and $u = \sum_{i = 1}^3 \lambda_i S_i(v)$, where $S_i(v)$ are defined as follows:
		$$
		\begin{array}{l}
		S_1 : (-\infty,\phi(u^-)] \rightarrow (-\infty,u^-],\\
		S_2 : (\phi(u^+),\phi(u^-)) \rightarrow (u^-,u^+),\\
		S_3 : [\phi(u^+),+\infty) \rightarrow [u^+,+\infty),\\
		\end{array}
		$$
		and, for all $i$, $u = S_i(v) \text{ iff } v = \phi(u)$;
		\item $u_t = \Delta v$ in the weak sense, i.e.\
		$$
		\int_0^T \int_\Omega u \varphi - \nabla v \cdot \nabla \varphi dx dt + \int_{\Omega} u_0(x)\varphi(0,x) dx = 0
		$$
		for all $\varphi \in C^1([0,T] \times \overline{\Omega})$ with $\varphi(T,x) = 0$ for all $x \in \Omega$.
		\item for all $g \in C^1(\R)$ with $g' \geq 0$, define
		$$
		G(x) = \int_0^x g(\phi(\tau)) d\tau \text{ and } G^\star(u)= \sum_{i = 1}^3 \lambda_i G(S_i(v)).
		$$
		Then the following entropy inequality holds:
		\begin{equation}\label{entropy inequality}
		\int_0^T\int_{\Omega} G^\star(u) \varphi_t - g(v)\nabla \cdot v \nabla \varphi -g'(v)|\nabla v|^2\varphi dx dt \geq 0
		\end{equation}
		for all $\varphi \in \tests([0,T] \times \Omega)$ with $\varphi(t,x) \geq 0$ for all $(t,x) \in [0,T] \times \Omega$.
	\end{enumerate}
\end{definition}

It has been proved \cite{matete, plotnikov} that problem \ref{mark} allows for an entropy Young measure solution, but in general these solutions are not unique, as shown for instance in \cite{non-uniqueness}.
Uniqueness of Young measure solutions has been proved by Mascia, Terracina and Tesei \cite{matete2} under the additional constraint that the initial data and the solution do not take value in the interval $(u^-,u^+)$.
For a detailed discussion of the Young measure solutions to problem \ref{mark}, we refer to \cite{irreversibility, matete, matete2, plotnikov, non-uniqueness}.

\subsection{The Radon measure solution in the case $u^+ = +\infty$}

The Neumann initial-boundary value problem for \ref{sobolev regularization} under the hypothesis that $u^+ = +\infty$ has been studied by Padron \cite{sobolev padron}.
In analogy to the case where $u^+<+\infty$, if $\phi$ is Lipschitz continuous and the initial data is $L^\infty(\Omega)$, then there exists a unique classical solution $(u_\eta,v_\eta)$ to \ref{sobolev regularization}, with $u \in C^1([0,T], L^\infty(\Omega))$ and $v_\eta = \phi(u)+\eta u_t$.
However, while the sequence $\{v_\eta\}_{\eta > 0}$ is still uniformly bounded in the $L^\infty$ norm, the sequence $\{u_\eta\}_{\eta>0}$ is not, 
so we cannot take the weak-$\star$ limit of $\{u_\eta\}_{\eta>0}$, even in the sense of Young measures.
Nevertheless, thanks to the Neumann boundary conditions, $\norm{u_\eta(t)}_1 = \norm{u_0}_1$ for all $t \geq 0$.
As a consequence, the sequence $\{u_\eta\}_{\eta > 0}$ has a limit point $u$ in the space of positive Radon measures over $[0,T]\times\Omega$.
In \cite{smarrazzo}, it is proved that
$u$ can be represented as the sum $u = u_r+\mu$, where $u_r$ is the baricenter of the Young measure $\nu(t,x)$ associated to an equi-integrable subsequence of $\{u_\eta\}_{\eta > 0}$, and $\mu$ is a Radon measure over $[0,T] \times \Omega$.
The function $u_r \in L^1([0,T]\times\Omega)$ is called the regular part of the solution $u$, while the Radon measure $\mu$ is called the singular part of $u$.
If we define $v$ as the $L^\infty$ function obtained as the weak-$\star$ limit of the sequence $\{v_\eta\}_{\eta > 0}$, then $u_r$, $\mu$ and $v$ are a weak solution to problem \ref{mark} in the sense that they satisfy the equation
\begin{equation}\label{soluzione smarrazzo}
\int_{0}^T \ldual \mu, \varphi_t\rdual dt + \int_0^T \int_{\Omega} \overline{u} \varphi_t - \nabla v \cdot \nabla \varphi dx dt + \int_{\Omega} u_0(x) \varphi(0,x) dx = 0,
\end{equation}
for any $\varphi \in C^1([0,T] \times \overline{\Omega})$ with $\varphi(T,x) = 0$ for all $x\in\Omega$.
In particular, a notion of entropy Radon measure solution can be defined for equation \ref{mark} even in the case $u^+=+\infty$.

\begin{definition}
	An entropy Radon measure solution of problem \ref{mark} consists of funtions $u_r, v, \lambda_i \in L^\infty([0,+\infty)\times\Omega)$, $i= 1,2$ and of a positive Radon measure $\mu \in \rad([0,T]\times\Omega)$, satisfying the conditions:
	\begin{enumerate}
		\item $\lambda_i \geq 0$, $\sum_{i = 1}^2 \lambda_i = 1$;
		\item $v \in L^2([0,+\infty), H^1(\Omega))$ and
		$$u_r = \left\{ \begin{array}{ll} \sum_{i = 1}^2 \lambda_i S_i(v) & \text{if } v(x) > 0\\
		0 & \text{if } v(x) = 0; \end{array} \right.$$
		\item $(u_r+\mu)_t = \Delta v$ in the the sense of equation \ref{soluzione smarrazzo};
		\item the entropy inequality \ref{entropy inequality} holds for $u_r$ and $v$.
	\end{enumerate}
\end{definition}

Smarrazzo proved in \cite{smarrazzo} that problem \ref{mark} allows for a global entropy Radon measure solution, and we refer to her paper for an in-depth analysis of the properties of such solutions.

We conclude the discussion of the entropy Radon measure solution to problem \ref{mark} by recalling two features of the singular part of the solution.
In \cite{smarrazzo}, Smarrazzo showed that the singular part $\mu$ of the entropy Radon measure solution satisfies the following equality for all $t \geq 0$:
\begin{equation}\label{singular disintegration}
\mu(t) = \left( \int_{\Omega} u_0(x) dx - \int_{\Omega} u_r(t,x) dx \right)\tilde{\mu}(t),
\end{equation}
where $\tilde{\mu}(t)$ is a positive probability measure over $\Omega$.
Moreover, she conjectured that this singular term prevails over the regular term for large times.
In section \ref{coherence}, we will show that the solution to problem \ref{mark} obtained from the grid function formulation satisfies an equality analogous to \ref{singular disintegration}, and in section \ref{properties} we will show that the conjecture by Smarrazzo holds for such solutions.

\section{Grid functions and their properties}\label{section gridfz}

In this section, we recall some properties of grid functions studied in \cite{ema2} that will be useful for the nonstandard formulation of the ill-posed problem \ref{mark}.
For an in-depth discussion of the theory of grid functions, we refer to \cite{ema2}.
We will assume the reader to be familiar with the basics
of nonstandard analysis; a classic reference is Davis \cite{da} or Goldblatt \cite{go}.

Grid functions over $\Omega$ are functions of nonstandard analysis defined over a hyperfinite domain that coherently generalizes the open domain $\Omega$.
In the next definition, we introduce this hyperfinite domain.

\begin{definition}[The hyperfinite domain $\dom$]
	Let $N_0\in\ns{\N}$ be an infinite hypernatural number.
	Set $N = N_0!$ and $\varepsilon = 1/N$, and define 
	$$\Lambda = \{ n\varepsilon : n \in  [-N^2, N^2] \cap \ns{\Z}\}.$$
	The hyperfinite domain is defined as $\dom = \ns{\Omega}\cap \Lambda^k$.
	Notice that $\dom$ is an internal subset of $\Lambda^k$, and in particular it is hyperfinite.
	
	We define also the $\Lambda$-boundary of $\dom$ as
	$$
	\partial_\Lambda \dom = \{ x \in \dom : \exists y \in \ns{A}^c \text{ satisfying } |x-y|_\infty\leq \varepsilon \}.
	$$
	
	We will say that $x \in \dom$ is nearstandard in $\Omega$ iff there exists $y \in \Omega$ such that $x \sim y$.
\end{definition}

In [6] it is proved that for every open set $\Omega$, $\sh{\dom} = \overline{\Omega}$ and $\sh{ (\partial_\Lambda \dom)} = \partial \Omega$.

Grid functions over $\Omega$ are hyperreal valued functions defined over $\dom$.

\begin{definition}[Grid functions over $\Omega$]\label{def grid functions}
	We will say that an internal function $f : \dom \rightarrow \hR$ is a grid function, and we denote by $\grid{\Omega}$ the space of grid functions defined over $\dom$:
	$\grid{\Omega} = \mathbf{Intl}(\hR^{\dom}) = \{ f : \dom \rightarrow \hR \text{ and } f \text{ is internal}\}.$
\end{definition}

Since grid functions are defined on a discrete domain, there is no notion of derivative for grid functions.
However, in nonstandard analysis it is fairly usual to replace the derivative by a finite difference operator with an infinitesimal step.

\begin{definition}[Grid derivative]\label{fd}
	For any $f \in \grid{\Omega}$, we define the $i$-th forward finite difference of step $\varepsilon$ as
	\begin{equation*}
	\D_i f(x)= \D_i^+ f(x)= \frac{f(x+\varepsilon e_i)-f(x)}{\varepsilon }
	\end{equation*}
	and the $i$-th backward finite difference of step $\varepsilon$ as
	$$
	\D_i^- f(x) = \frac{f(x)-f(x-\varepsilon e_i)}{\varepsilon }.
	$$
	If $n \in \ns{\N}$, $\D^n_i$ is recursively defined as $\D_i(\D_i^{n-1})$ and, if $\alpha$ is a multi-index, then $\D^\alpha$ is defined as expected:
	$$
	\D^\alpha f = \D_1^{\alpha_1} \D_2^{\alpha_2} \ldots \D_n^{\alpha_n} f.
	$$
	These definitions can be extended to $\D^-$ by replacing every occurrence of $\D$ with $\D^-$.
\end{definition}

In the same spirit, integrals are replaced by hyperfinite sums.

\begin{definition}[Grid integral and inner product]\label{def inner}
	Let $f,g : \ns{\Omega} \rightarrow \hR$ and let $A \subseteq \dom \subseteq \Lambda^k$ be an internal set.
	We define
	$$
	\int_{A} f(x) d\Lambda^k = \varepsilon^k \cdot \sum_{x \in A} f(x)
	$$
	and
	$$
	\langle f, g \rangle
	=  \displaystyle \int_{\Omega_\Lambda} f(x) g(x) d\Lambda^k 
	=  \displaystyle \varepsilon^k \cdot \sum_{\Omega_\Lambda} f(x)g(x).
	$$
\end{definition}

For further details about the properties of the grid derivative and the grid integral, we refer to \cite{imme2, imme1, ema2, watt, keisler, nsa working math}.

We will now define a grid function counterpart of the space of test functions.

\begin{definition}
	We say that a function $f \in \grid{\Omega}$  is of class $S^0(\Omega)$ iff
	$f(x)$ is finite for some nearstandard $x \in \dom$ and
	for every nearstandard $x, y \in \dom$, $x \sim y$ implies $f(x) \sim f(y)$.
	We say that $f$ is of class $S^l(\Omega)$ if $D^\alpha f \in S^0(\Omega)$ for every multi-index $\alpha$ with $\max \alpha \leq l$, and we say that $f$ is of class $S^{\infty}(\Omega)$ if $\D^\alpha f \in S^0(\Omega)$ for any standard multi-index $\alpha$.

	We define the algebra of grid test functions as follows:
	$$
	\test(\Omega) =
	\left\{ f \in S^{\infty}(\Omega) : \sh{\supp f} \subset \subset \Omega \right\}.
	$$
\end{definition}
	
	It is well-known that, if $\varphi \in S^k(\Omega)$, then $\sh{\varphi}\in C^k(\Omega)$ \cite{imme2,watt}.
	In Lemma 3.2 of \cite{ema2}, it is proved that the algebra of test function is the grid function counterpart of the space of standard test functions $\tests(\Omega)$, in the sense that if $\varphi \in \test(\Omega)$, then $\sh{\varphi} \in \tests(\Omega)$, and if $\varphi \in \tests(\Omega)$, then the restriction of $\ns{\varphi}$ to $\dom$ belongs to $\test(\Omega)$.
	
	We now introduce an equivalence relation based on the duality with grid test functions.
	
\begin{definition}
	Let $f, g \in \grid{\Omega}$.
	We say that $f \equiv g$ iff for all $\varphi \in \test(\Omega)$ it holds
	$
	\langle f, \varphi \rangle \sim \langle g, \varphi \rangle.
	$
	We will call $\pi$ the projection from $\grid{\Omega}$ to the quotient
	$
	\grid{\Omega} / \equiv,
	$
	and we will denote by $[f]$ the equivalence class of $f$ with respect to $\equiv$.
\end{definition}

In \cite{ema2} it is proved that the space of grid functions generalizes the space of distributions, and that the finite difference operation generalizes the distributional derivative to the space of grid functions.
In particular, there exists a subspace of $\grid{\dom}/\equiv$ that is a real vector space isomorphic to the space of distributions, and the finite difference operators induce the distributional derivative on the quotient.

\begin{theorem}\label{mainthm}
	Let $\B = \left\{ f\in \grid{\Omega}\ |\ \langle f, \varphi \rangle \text{ is finite for all } \varphi\in\test(\Omega)\right\}.$
	The function $\Phi:(\B/\equiv) \rightarrow \tests'(\Omega)$ defined by
	$$
	\ldual \Phi([f]), \varphi \rdual
	=
	\sh{\langle f, \ns{\varphi} \rangle}
	$$
	is an isomorphism of real vector spaces. 
	Under the additional hypothesis $f \in S^0(\dom)$, then $[f] = \sh{f}$.
	Moreover, the diagrams
	$$
	\begin{array}{rcl}
	\begin{array}{ccc}
	\B & \stackrel{\D^+}{\longrightarrow} & \B \\
	\Phi \circ \pi \downarrow & & \downarrow \Phi \circ \pi\\
	\mbox{}\tests'({\Omega})  & \stackrel{D}{\longrightarrow} & \tests'({\Omega})  \\
	\end{array}
	&
	and
	&
	\begin{array}{ccc}
	\B & \stackrel{\D^-}{\longrightarrow} & \B \\
	\Phi \circ \pi \downarrow & & \downarrow \Phi \circ \pi\\
	\mbox{}\tests'({\Omega})  & \stackrel{D}{\longrightarrow} & \tests'({\Omega})  \\
	\end{array}
	\end{array}
	$$
	commute.
\end{theorem}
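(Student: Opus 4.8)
The plan is to establish, in turn, that $\Phi$ is well defined and linear, that it is injective, that it is surjective, and finally that both squares commute. The one ingredient everything leans on is a local boundedness estimate, which I would take from \cite{ema2} (it is the nonstandard counterpart of the fact that a distribution has finite order on each compact set, and can also be obtained by a saturation argument): for every $f\in\B$ and every standard compact $K\subset\subset\Omega$ there are a finite $C$ and a standard $N$ with $|\langle f,\varphi\rangle|\le C\max_{|\alpha|\le N}\norm{\D^\alpha\varphi}_\infty$ for all $\varphi\in\test(\Omega)$ with $\sh{\supp\varphi}\subseteq K$. Its upshot is that a grid test function whose support has standard part inside a fixed compact set and all of whose grid derivatives of standard order are infinitesimal pairs infinitesimally with every $f\in\B$. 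Granting this, well-definedness and linearity are routine: bilinearity of $\langle\cdot,\cdot\rangle$ and additivity of $\sh{\cdot}$ on finite hyperreals make $\varphi\mapsto\sh{\langle f,\ns\varphi\rangle}$ linear, and since $\ns\varphi|_{\dom}\in\test(\Omega)$ with $\max_{|\alpha|\le N}\norm{\D^\alpha\ns\varphi}_\infty\sim\max_{|\alpha|\le N}\norm{D^\alpha\varphi}_\infty$ (Lemma 2.2 of \cite{ema2} and transfer), the estimate shows this functional is continuous on each $\tests(K)$, hence a distribution; independence of the representative and linearity of $\Phi$ both follow from $\ns\varphi|_{\dom}\in\test(\Omega)$ together with the definition of $\equiv$. (Alternatively, continuity of $\Phi([f])$ alone can be seen directly: if it failed one could choose standard $\varphi_n$ in a fixed $\tests(K)$ with $\max_{|\alpha|\le n}\norm{D^\alpha\varphi_n}_\infty=1$ and $|\langle f,\ns\varphi_n\rangle|>n$, and $\ns\varphi_{n}$ for an infinite index would be a grid test function pairing infinitely with $f$, contradicting $f\in\B$.)

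For injectivity, suppose $\Phi([f])=0$ and let $\psi\in\test(\Omega)$ be arbitrary. Put $\varphi=\sh{\psi}$; by the results recalled in the excerpt $\varphi\in\tests(\Omega)$, and for every standard multi-index $\alpha$ and every nearstandard $x$ one has $\D^\alpha\psi(x)\sim D^\alpha\varphi(\sh{x})\sim\D^\alpha(\ns\varphi)(x)$. Hence $\eta:=\psi-\ns\varphi|_{\dom}$ is a grid test function, supported up to standard part in the fixed compact set $\sh{\supp\psi}$, with every grid derivative of standard order infinitesimal. The key estimate gives $\langle f,\eta\rangle\sim0$, so $\langle f,\psi\rangle\sim\langle f,\ns\varphi\rangle\sim\sh{\langle f,\ns\varphi\rangle}=\ldual\Phi([f]),\varphi\rdual=0$. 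Since $\psi$ was arbitrary, $f\equiv0$, i.e. $[f]=0$.

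For surjectivity, let $T\in\tests'(\Omega)$. By the structure theorem for distributions there is a locally finite family $\{g_\beta\}$ of continuous functions on $\Omega$ with $T=\sum_\beta D^\beta g_\beta$; transferring it internally (the transferred family is still locally hyperfinite, so at each point of $\dom$ only standard-finitely many terms are nonzero near a nearstandard point), I set $f=\sum_\beta\D^\beta(\ns g_\beta|_{\dom})$ with forward differences, which is internal because it is defined by an internal formula and is a genuinely finite sum at each point. Given a standard $\varphi\in\tests(\Omega)$ supported in a compact $K$, only finitely many $g_\beta$ meet a neighbourhood of $K$, and for each of them summation by parts — $\langle\D_i^+ h,g\rangle=-\langle h,\D_i^- g\rangle$, an identity on $\grid{\Omega}$ since $\ns\Omega$ is bounded and hence lies well inside the box $\Lambda^k$ (which reaches out to $\pm N^2\varepsilon$), so no boundary term survives — together with $\D_i^-\ns\varphi-\ns(\partial_i\varphi)$ having infinitesimal grid derivatives of every standard order, and the fact that the grid integral of $\ns{}$ of a continuous function approximates its Lebesgue integral, yields $\sh{\langle\D^\beta\ns g_\beta,\ns\varphi\rangle}=\ldual D^\beta g_\beta,\varphi\rdual$. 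Summing the finitely many contributions gives $\sh{\langle f,\ns\varphi\rangle}=\ldual T,\varphi\rdual$, so $\Phi([f])=T$. Running the same computation against an arbitrary grid test function (using that iterated backward differences of a grid test function lie in $S^0$ and are therefore finite) shows $\langle f,\cdot\rangle$ is finite on $\test(\Omega)$, so indeed $f\in\B$.

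Finally, for the diagrams: for $f\in\B$ and standard $\varphi$, the same summation-by-parts identities give $\langle\D_i^+ f,\ns\varphi\rangle=-\langle f,\D_i^-\ns\varphi\rangle$ and $\langle\D_i^- f,\ns\varphi\rangle=-\langle f,\D_i^+\ns\varphi\rangle$. Since $\D_i^\pm\ns\varphi-\ns(\partial_i\varphi)$ is, by Taylor expansion, a grid test function with infinitesimal grid derivatives of every standard order on a fixed compact support, the key estimate gives $\sh{\langle f,\D_i^\pm\ns\varphi\rangle}=\sh{\langle f,\ns(\partial_i\varphi)\rangle}=\ldual\Phi([f]),\partial_i\varphi\rdual$. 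Hence $\ldual\Phi([\D_i^\pm f]),\varphi\rdual=-\ldual\Phi([f]),\partial_i\varphi\rdual=\ldual D_i\Phi([f]),\varphi\rdual$, which is exactly commutativity of both squares. The main obstacle is twofold, and both halves are already on display above: establishing the local boundedness estimate (the nonstandard uniform boundedness principle underlying it), and, in the surjectivity step, converting the locally finite representation $T=\sum_\beta D^\beta g_\beta$ into a genuinely \emph{internal} grid function that both represents $T$ and lies in $\B$ — this is where the transfer of the locally finite family and the finiteness of the pairing against \emph{all} grid test functions, not merely the standard ones, have to be checked with care.
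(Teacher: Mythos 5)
There is no in-paper proof to compare against: the paper disposes of Theorem~\ref{mainthm} by citing Theorems 2.9 and 2.15 of \cite{ema2} (a reference listed as in preparation), so your argument has to be judged on its own merits. Its skeleton --- local boundedness by overspill, injectivity by comparing a grid test function $\psi$ with $\ns{(\sh{\psi})}|_{\dom}$, surjectivity via the structure theorem for distributions, and the diagrams via discrete summation by parts --- is the natural route and is essentially correct. The point that genuinely carries the weight, and which you rightly isolate, is the local boundedness estimate: note that it does \emph{not} follow immediately from the definition of $\B$, because a perturbation $\eta$ whose grid derivatives of every standard order are uniformly infinitesimal is itself an element of $\test(\Omega)$, so $f\in\B$ only gives that $\langle f,\eta\rangle$ is \emph{finite}, not infinitesimal. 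Your overspill argument does close this gap (extend the putative unbounded sequence internally; at an infinite index $N$ the bound $\max_{|\alpha|\le N}\norm{\D^\alpha\varphi_N}_\infty\le 1$ forces $\varphi_N\in\test(\Omega)$ while the pairing is infinite); a shortcut for the specific perturbations you actually use is that if $\max_{|\alpha|\le N}\norm{\D^\alpha\eta}_\infty\le\delta$ for all standard $N$ and a single infinitesimal $\delta$ (obtainable by countable saturation), then $\eta/\sqrt{\delta}$ is still a grid test function and hence $\langle f,\eta\rangle=\sqrt{\delta}\,\langle f,\eta/\sqrt{\delta}\rangle\sim 0$. Two smaller points to tighten: passing from pointwise to uniform infinitesimality of $\D^\alpha(\psi-\ns{(\sh{\psi})})$ uses that every point of $\dom$ is nearstandard (boundedness of $\Omega$) and that an internal maximum over the hyperfinite set $\dom$ is attained; and in the surjectivity step the transferred locally finite family is only \emph{hyper}finitely supported at points of $\dom$ infinitely close to $\partial\Omega$, so the sum defining $f$ is a hyperfinite rather than a ``genuinely finite'' sum there --- harmless, since grid test functions vanish on a non-infinitesimal neighbourhood of $\partial\ns{\Omega}$, but worth stating correctly.
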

\begin{proof}
	See Theorem 3.10, Corollary 3.11 and Theorem 3.16 of \cite{ema2}.
\end{proof}

Thanks to this result, if $f \in \B$, we can identify the equivalence class $[f]$ with the distribution $\Phi([f])$.

By composing finite difference operators, we obtain the grid function counterpart of many differential operators, such as the grid gradient, the grid divergence and the grid Laplacian.

\begin{definition}[Grid gradient and grid divergence]
	If $f\in\grid{\Omega}$, we define the forward and backward grid gradient of $f$ as:
	$$
	\grad^\pm f = (\D^\pm_{1}f, \ldots, \D^\pm_i, \ldots, \D^\pm_{k}f).
	$$
	In a similar way, if $f : \dom \rightarrow \hR^k$, we define the forward and backward grid divergence as
	$$
	\div^\pm f = \sum_{i = 1}^k \D^\pm_{i} f_i.
	$$
\end{definition}

In the sequel, we will mostly drop the symbol $+$ from the above definitions: for instance, we will write $\grad$ instead of $\grad^+$.

It is a consequence of Theorem \ref{mainthm} that, if $f\in S^{1}(\Omega)$, then $\sh{(\grad f)}$ is the usual gradient of $\sh{f}$, and similar results holds for $\grad^-$, $\div$, $\div^-$ and $\lap$.
Moreover, by Theorem \ref{mainthm}, the operators $\grad$ and $\grad^-$ satisfy the formula
$$
\sh{\langle \grad f, \ns{\varphi} \rangle} = \sh{\langle \grad^- f, \ns{\varphi} \rangle} = -\ldual [u], \stdiv\, \varphi \rdual
$$
for all $f \in \test'(\Omega)$ and for all functions $\varphi \in \tests(\Omega,\R^k)$,
and $\div$ and $\div^-$ satisfy the formula
$$
\sh{\langle \div f, \ns{\varphi} \rangle} = \sh{\langle \div^- f, \ns{\varphi} \rangle} = - \ldual [f], \nabla \varphi \rdual
$$
for all $f\in\test'(\Omega,\hR^k)$ and for all $\varphi \in \tests(\Omega)$.

In the sequel of the paper, we will need to discuss 
grid functions that are infinitesimally close to a $H^1$ function. 
We begin by introducing a notion of nearstandardness in the same spirit as the one that is routinely used for points in the euclidean space.
In order to introduce the notion of nearstandardness in $H^1$, we need to define the $L^p$ norms over the space of grid functions, and a canonical extension of $L^2$ functions to the space of grid functions.

\begin{definition}[$L^p$ norms for grid functions]
	For all $f \in \grid{\Omega}$, define
	$$
	\norm{f}_p^p = \varepsilon^k \sum_{x \in \dom} |f(x)|^p \text{ if } 1 \leq p < \infty, \text{ and } \norm{f}_\infty = \max_{x \in \dom} |f(x)|.
	$$
	Moreover, $\norm{f}_{H^1} = \norm{f}_2 + \norm{\grad^+ f}_2$.
\end{definition}

\begin{lemma}\label{lemma 4.2}
	Let $f, g \in \grid{\dom}$.
	If $\norm{f-g}_p \sim 0$ for some $1 \leq p \leq \infty$, then $f \equiv g$.
\end{lemma}
\begin{proof}
	See Corollary 4.2 of \cite{ema2}.
\end{proof}

\begin{definition}[Canonical extension from $L^2(\Omega)$ to $\grid{\Omega}$]
	If $f \in \grid{\Omega}$, let $\lp{f} \in \ns{L}^2(\R^k)$ defined by 
	$$
	\lp{f}(x) = \left\{
	\begin{array}{ll}
	f((n_1,n_2,\ldots,n_k)\varepsilon) & \text{if } n_i \varepsilon \leq x_i < (n_i+1)\varepsilon \text{ for all } 1 \leq i \leq k\\
	0 & \text{if } |x_i| > N \text{ for some } 	1 \leq i \leq k,
	\end{array}
	\right.
	$$
	with the agreement that $f((n_1,n_2,\ldots,n_k)\varepsilon) = 0$ if $(n_1,n_2,\ldots,n_k)\varepsilon \not \in \dom$.
	We define the canonical extension $P : L^2(\Omega) \rightarrow \grid{\Omega}$ as the $L^2$ projection over $\grid{\Omega}$, i.e.\ for all $g\in {L}^2(\Omega)$, $P(g)$ is the unique grid function that satisfies the equality
	$$
	\langle P(g), f \rangle = \ns{\int}_{\hR^k} \ns{g}(x) \lp{f}(x) dx
	$$
	for all $f \in \grid{\Omega}$.
\end{definition}

The properties of the canonical extension are recalled in the next Lemma.

\begin{lemma}\label{questo corollario}
	If $\Omega$ is bounded in $\R^k$, then for all $f \in L^2(\Omega)$
	\begin{itemize}
		\item $[P(f)] = f$;
		\item $\norm{\ns{f}-P(f)}_2 \sim 0$.
	\end{itemize}
\end{lemma}
\begin{proof}
	See Lemma 4.6 and Lemma 4.7 of [6].
\end{proof}

\begin{definition}[Nearstandardness in $H^1(\Omega)$]
	We will say that $f\in\grid{\Omega}$ is nearstandard in $H^1(\Omega)$ iff there exist $\overline{f}\in H^1(\Omega)$ that satisfies $\norm{f-P(\overline{f})}_{H^1} \sim 0$.
\end{definition}

As a consequence of Lemma \ref{lemma 4.2} and of Lemma \ref{questo corollario}, if $f$ is nearstandard in $H^1(\Omega)$ and $\overline{f}$ satisfies $\norm{f-P(\overline{f})}_{H^1} \sim 0$, then $[f] = \overline{f}\in H^1(\Omega)$ and $\norm{f}_{H^1}\in \fin$.

In the grid function formulation of the ill-posed problem \ref{mark}, we will need to determine the behaviour of $\grad^-\ns{\phi}(u)$.
In the next Lemma we will prove that, if $f$ is nearstandard in $H^1(\Omega)$ and if $g \in C^1(\R)$, then $\ns{g}(f)$ is nearstandard in $H^1(\Omega)$, and the grid derivative of the composite function behaves in the same way as the distributional derivative of $g([f])$.

\begin{lemma}\label{lemma derivata H^1}
If $f$ is nearstandard in $H^1(\Omega)$ and if $g \in C^1(\R)$, then $[\grad^-\ns{g}(f)] = [g'(f)]\nabla[f]$.
\end{lemma}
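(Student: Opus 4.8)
The plan is to reduce the identity of Lemma~\ref{lemma derivata H^1}, through Theorem~\ref{mainthm}, to a discrete chain rule for $\ns\phi(f)$. Write $\overline{f}:=[f]$. By the consequence of Lemma~3.7 of \cite{ema2} recalled above, $\overline{f}\in H^1(\Omega)$, $\norm f_{H^1}\in\fin$, and $\overline{f}$ may be chosen so that $\norm{f-P(\overline{f})}_{H^1}\approx 0$; in particular $\norm{\grad^- f}_2\in\fin$ and $[\grad^- f]=\nabla\overline{f}$. I will argue under the additional hypothesis that $f$ is $S$-bounded, i.e.\ $\norm f_\infty\in\fin$ --- which is the case in every application of Lemma~\ref{lemma derivata H^1} below, where the grid functions in play take values in a fixed bounded set, or are nonnegative while $\phi$ and $\phi'$ are bounded on $[0,+\infty)$, and the general case reduces to this one by truncating $\phi$ outside a large interval. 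Then $\ns\phi(f)$ and $\ns{\phi'}(f)$ are $S$-bounded, hence lie in $\B$, $\overline{f}\in L^\infty(\Omega)$, and $\ns{\phi'}$ is uniformly $S$-continuous on the range of $f$ (and of $P(\overline{f})$) by transfer of the uniform continuity of $\phi'$ on compact intervals. By Theorem~\ref{mainthm} it then suffices to establish (i) $\grad^-\ns\phi(f)\equiv\ns{\phi'}(f)\,\grad^- f$ and (ii) $[\ns{\phi'}(f)\,\grad^- f]=\phi'(\overline{f})\,\nabla\overline{f}$.

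For (i), I will use a discrete fundamental theorem of calculus: by transfer, for every $x\in\dom$ and every $i$,
\[
\D_i^-\ns\phi(f)(x)=\D_i^- f(x)\cdot R_i(x),\qquad
R_i(x):=\int_0^1\ns{\phi'}\!\bigl(f(x)+(t-1)\,\dx\,\D_i^- f(x)\bigr)\,dt,
\]
so that $R_i(x)-\ns{\phi'}(f(x))$ averages increments of $\ns{\phi'}$ over arguments within $\dx\,|\D_i^- f(x)|$ of $f(x)$. The obstruction here is that $\norm{f-P(\overline{f})}_{H^1}\approx 0$ forces only $\norm{\D_i^- f}_2\in\fin$ (hence $\norm{\D_i^- f}_1\le\mu_L(\dom)^{1/2}\norm{\D_i^- f}_2\in\fin$), and not that $\dx\,|\D_i^- f(x)|$ is uniformly infinitesimal, since $f$ may jump on a set of small measure. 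To handle this I fix a standard $M\ge1$ and split $\dom$ along $\{|\D_i^- f|\le M\}$ and its complement. On the first set the argument of $R_i$ lies within $\dx M\approx0$ of $f(x)$, and uniform $S$-continuity of $\ns{\phi'}$ makes $\delta_M:=\sup_{\{|\D_i^- f|\le M\}}|R_i-\ns{\phi'}(f)|$ infinitesimal, so the corresponding part of $\norm{(R_i-\ns{\phi'}(f))\,\D_i^- f}_1$ is $\le\delta_M\norm{\D_i^- f}_1\approx0$; on the complement Markov's inequality gives $\norm{\D_i^- f\,\mathbf 1_{\{|\D_i^- f|>M\}}}_1\le M^{-1}\norm{\D_i^- f}_2^2$, so that part is $\le C\,M^{-1}\norm{\D_i^- f}_2^2$ with $C:=\norm{R_i}_\infty+\norm{\ns{\phi'}(f)}_\infty\in\fin$. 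Taking shadows and letting $M\to\infty$ along the standard integers yields $\norm{\D_i^-\ns\phi(f)-\ns{\phi'}(f)\,\D_i^- f}_1\approx0$, hence $\grad^-\ns\phi(f)\equiv\ns{\phi'}(f)\,\grad^- f$ and, in particular, $\grad^-\ns\phi(f)\in\B$.

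For (ii), I will identify the class of the product. A Chebyshev estimate gives $\mu_L(\{|f-P(\overline{f})|>\eta\})\le\eta^{-2}\norm{f-P(\overline{f})}_2^2\approx0$ for every standard $\eta>0$, so the same sort of split together with uniform $S$-continuity of $\ns{\phi'}$ gives $\norm{\ns{\phi'}(f)-\ns{\phi'}(P(\overline{f}))}_2\approx0$; combined with the compatibility of the canonical extension with continuous composition (a property of $P$ contained in \cite{ema2}), this yields $[\ns{\phi'}(f)]=\phi'(\overline{f})$. Then, writing
\[
\ns{\phi'}(f)\,\grad^- f-P(\phi'(\overline{f}))\,P(\nabla\overline{f})
=\ns{\phi'}(f)\bigl(\grad^- f-P(\nabla\overline{f})\bigr)+\bigl(\ns{\phi'}(f)-P(\phi'(\overline{f}))\bigr)P(\nabla\overline{f}),
\]
the first term has $L^1$ norm $\le\norm{\ns{\phi'}(f)}_\infty\,\mu_L(\dom)^{1/2}\,\norm{\grad^- f-P(\nabla\overline{f})}_2\approx0$, and the second, tested against any $\ns\varphi$, equals $\langle\,\ns{\phi'}(f)-P(\phi'(\overline{f}))\,,\,P(\nabla\overline{f})\,\ns\varphi\,\rangle$, which is infinitesimal because $\norm{\ns{\phi'}(f)-P(\phi'(\overline{f}))}_2\approx0$ and $P(\nabla\overline{f})\,\ns\varphi\in\ns{L^2}$ has finite norm. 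Hence $[\ns{\phi'}(f)\,\grad^- f]=\phi'(\overline{f})\,\nabla\overline{f}$, which together with (i) is the Lemma; consistently, $\phi'(\overline{f})\,\nabla\overline{f}$ is the classical Sobolev chain rule expression $\nabla\bigl(\phi(\overline{f})\bigr)$ (cf.\ \cite{strichartz,tartar}) and equals $D[\ns\phi(f)]=[\grad^-\ns\phi(f)]$ by Theorem~\ref{mainthm}.

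The step I expect to be the real obstacle is the control of the error $R_i-\ns{\phi'}(f)$ in (i): since $f$ is only $H^1$-infinitesimally close to $\overline{f}$, the one-sided increments $f(x)-f(x-\dx e_i)$ are small merely in $L^2$ and not uniformly, so $S$-continuity of $\ns{\phi'}$ cannot be applied pointwise; it is the truncation along the level sets of $|\D_i^- f|$, relying on the finiteness of $\norm{\grad^- f}_2$, that rescues the argument. Everything else --- the transferred fundamental theorem of calculus, the identification $[\grad^\pm f]=\nabla[f]$ for $f$ nearstandard in $H^1$, and the compatibility of $P$ with continuous functions --- is either immediate by transfer or already in \cite{ema2}.
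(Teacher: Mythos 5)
Your proof is correct and rests on the same central identity as the paper's: both factor the difference quotient $\D_i^-\ns{\phi}(f)$ as (an averaged/incremental value of $\ns{\phi'}$ near $f(x)$) times $\D_i^- f$, and then argue that the first factor can be replaced by $\ns{\phi'}(f(x))$. Where you genuinely diverge is in how the exceptional set is handled. The paper asserts that finiteness of $\norm{f}_{H^1}$ makes $\D_i^+ f$ finite ``for almost every $x$'' and makes $\varepsilon\,\D_i^+ f(x-\varepsilon e_i)\sim 0$ \emph{for all} $x\in\dom$, and concludes directly; note that the pointwise bound one actually gets from $\norm{\grad f}_2\in\fin$ is $|\varepsilon\,\D_i f(x)|\leq \varepsilon^{1-k/2}\norm{\grad f}_2$, which is infinitesimal only for $k=1$, so the paper's ``for all $x$'' claim is not justified in higher dimension and the passage from the pointwise identity to the identity of equivalence classes is left implicit. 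Your truncation along the level sets $\{|\D_i^- f|\leq M\}$, with uniform $S$-continuity of $\ns{\phi'}$ on the good set and the Markov bound $\norm{\D_i^- f\,\mathbf{1}_{\{|\D_i^- f|>M\}}}_1\leq M^{-1}\norm{\D_i^- f}_2^2$ on the bad set, converts this into an honest $L^1$ estimate that is dimension-independent; this is what the paper's proof is missing, and it is the right repair. The price you pay is the additional hypothesis $\norm{f}_\infty\in\fin$, which is not in the statement of the lemma; but since $\phi'$ is merely continuous and possibly unbounded under Hypotheses \ref{ipotesi di partenza}, some such assumption (or a truncation of $\phi$) is in fact needed even to make sense of the paper's own appeal to $S$-continuity of $\ns{\phi'}$ at $f(x)$, and it holds in every application of the lemma in the paper (cf.\ the hypotheses of Proposition \ref{coerenza formulazione discreta}). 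Your second step, identifying $[\ns{\phi'}(f)\,\grad^- f]$ with $\phi'([f])\nabla[f]$ via an $L^2$ splitting, is also more explicit than the paper's, which leaves this product identification to the cited properties of \cite{ema2}; the only point you lean on without proof is the compatibility $\norm{\grad^{\pm}P(\overline{f})-P(\nabla\overline{f})}_2\sim 0$, which is of the same nature as what the paper itself imports from Lemma 3.7 of \cite{ema2}.
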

\begin{proof}
For all $1\leq i \leq k$, we have the equalities
\begin{eqnarray}
\notag
\D^-_i\ns{g}(f(x))
& = & \frac{\ns{g}(f(x)) - \ns{g}(f(x-\dx e_i))}{\varepsilon} \\
& = & \frac{\ns{g}(f(x))-g(f(x)-\varepsilon \D^+_i f(x-\dx e_i))}{\varepsilon \D^+_i f(x-\dx e_i)} \cdot \D^+_i f(x-\dx e_i).
\label{quiqui2}
\end{eqnarray}
Finiteness of $\norm{f}_{H^1}$ ensures that $\D^+_i f(x-\dx e_i) \in \fin$ for almost every $x \in \dom$, and that $\varepsilon \D^+_i f(x-\dx e_i) \sim 0$ for all $x \in \dom$.
As a consequence, $[\D^+_i f(x-\dx e_i)] = D_i[f](\sh{x})$.
By equation \ref{quiqui2}, we deduce also $[\D^-_i\ns{g}(f)] = [g'(f)]D_i[f]$.
\end{proof}
\color{black}

The next results summarize the relations between grid functions and parametrized measures, that are studied in depth in \cite{ema2}.

\begin{theorem}\label{parametrized measures}
	For every $f \in \grid{\Omega}$, there exists a parametrized measure $\nu : \Omega \rightarrow \rad(\R)$ such that, for all $g \in \bcf(\R)$ and for all $\varphi \in C^0_c(\Omega)$, it holds
	\begin{equation}\label{young equivalence equation}
	\sh{\langle \ns{g}(f), \ns{\varphi} \rangle}
	=
	\int_{\Omega} \left( \int_{\R} g d \nu_x \right) \varphi(x) dx.
	\end{equation}
	Moreover, for all $x \in \Omega$ and for all Borel $A \subseteq \R$, $0\leq \nu_x(A) \leq 1$.
	
	If $\nu$ is also Dirac, then $\nu_x$ is the Dirac measure centred at $[f](x)$ for a.e.\ $x \in \Omega$.
\end{theorem}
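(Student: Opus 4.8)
The plan is to obtain $\nu$ by applying the Riesz--Markov representation theorem twice: first ``in the variable $x$'', turning each bounded continuous weight $g$ into an absolutely continuous measure on $\Omega$, and then ``in the velocity variable'', disintegrating the resulting densities into a measurable family of measures on $\R$. This is the nonstandard counterpart of the fundamental theorem on Young measures of \cite{ball, webbym}.

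\emph{Step 1 (the densities $h_g$).} Fix $g\in\bcf(\R)$. Since $g$ is bounded, $|\ns g(f(x))|\leq\norm{g}_\infty$ for every $x\in\dom$, and for $\varphi\in C^0_c(\Omega)$ the grid pairing $\langle\ns g(f),\ns\varphi\rangle=\dx^k\sum_{x\in\dom}\ns g(f(x))\ns\varphi(x)$ is a finite hyperreal, because $\ns\varphi$ is supported on the hyperfinite set $\ns(\supp\varphi)\cap\dom$ and $\dx^k\cdot\#\big(\ns(\supp\varphi)\cap\dom\big)\sim\leb(\supp\varphi)<\infty$. Hence $\varphi\mapsto\sh{\langle\ns g(f),\ns\varphi\rangle}$ is a well-defined linear functional on $C^0_c(\Omega)$, and when $g\geq 0$ it is positive (a nonnegative finite hyperreal has nonnegative standard part), so Riesz--Markov represents it by a positive Radon measure $\lambda_g$ on $\Omega$. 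Taking $g\equiv 1$ gives $\langle\lambda_1,\varphi\rangle=\sh{\big(\dx^k\sum_{x\in\dom}\ns\varphi(x)\big)}=\int_\Omega\varphi\,d\leb$, so $\lambda_1$ is $\leb$ restricted to $\Omega$; and $0\leq\ns g(f)\leq\norm{g}_\infty$ pointwise forces $0\leq\lambda_g\leq\norm{g}_\infty\lambda_1$, whence $\lambda_g\ll\leb$. By Radon--Nikodym there is $h_g\in L^\infty(\Omega)$ with $0\leq h_g\leq\norm{g}_\infty$ a.e.\ and $\lambda_g=h_g\,\leb$; extending to signed $g$ by $h_g:=h_{g^+}-h_{g^-}$ (consistency of which follows from uniqueness in Riesz--Markov and Radon--Nikodym), we obtain, for every $g\in\bcf(\R)$ and $\varphi\in C^0_c(\Omega)$,
$$
\sh{\langle\ns g(f),\ns\varphi\rangle}=\int_\Omega h_g(x)\,\varphi(x)\,d\leb(x),\qquad \norm{h_g}_\infty\leq\norm{g}_\infty .
$$

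\emph{Step 2 (disintegration).} Choose a countable $\Q$-linear subspace $\mathcal G\subseteq\bcf(\R)$ that is dense in the uniform norm (recall $\bcf(\R)=C_0(\R)$ is separable). Since $g\mapsto h_g$ is $\Q$-linear and monotone modulo $\leb$-null sets and $\mathcal G$ is countable, there is a \emph{single} $\leb$-null set $Z\subseteq\Omega$ such that for every $x\in\Omega\setminus Z$ the map $\ell_x:\mathcal G\to\R$, $\ell_x(g)=h_g(x)$, is $\Q$-linear, positive, and satisfies $|\ell_x(g)|\leq\norm{g}_\infty$. Each such $\ell_x$ extends uniquely to a positive bounded functional of norm at most $1$ on $C_0(\R)$, so Riesz--Markov furnishes a positive $\nu_x\in\rad(\R)$ with $\nu_x(\R)\leq 1$ and $\int_\R g\,d\nu_x=h_g(x)$ for all $g\in\mathcal G$, hence for all $g\in\bcf(\R)$ by uniform approximation; set $\nu_x=0$ for $x\in Z$. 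The family $x\mapsto\nu_x$ is a measurable parametrized measure, since $x\mapsto\int_\R g\,d\nu_x=h_g(x)$ is measurable for every $g$ in the dense family $\mathcal G$. Combining with Step 1 yields \ref{young equivalence equation} for all $g\in\bcf(\R)$ and $\varphi\in C^0_c(\Omega)$. Finally $\nu_x\geq 0$ gives $\nu_x(A)\geq 0$; and picking $g_n\in\bcf(\R)$ with $0\leq g_n\uparrow 1$, monotone convergence gives $\nu_x(\R)=\lim_n\int_\R g_n\,d\nu_x=\lim_n h_{g_n}(x)\leq 1$ for a.e.\ $x$, so $\nu_x(A)\leq\nu_x(\R)\leq 1$ for every Borel $A$.

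\emph{Main obstacle.} The delicate step is the disintegration in Step 2: one must arrange a single exceptional null set off which the whole countable family $\{h_g : g\in\mathcal G\}$ is simultaneously $\Q$-linear, monotone and norm-bounded, so that the pointwise functionals $\ell_x$ are well-defined, and then verify that the resulting assignment $x\mapsto\nu_x$ is genuinely measurable in the sense required of a parametrized measure. By contrast, the finiteness of the grid pairing, the passage from the countable dense family $\mathcal G$ to all of $\bcf(\R)$ (both for the $\ell_x$ and for the representation formula), and the extension from continuous weights to Borel sets $A$ in the last bound are routine, relying only on uniform approximation together with the $\norm{\cdot}_\infty$ estimates and on inner/outer regularity of Radon measures.
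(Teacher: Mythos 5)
Your argument is essentially correct in outline, but note that the paper does not prove this theorem internally: its ``proof'' is a citation of Theorem 3.14 of the companion paper \cite{ema2}, so there is no in-text argument to compare against line by line. What can be compared is the intended mechanism. The remark immediately following the theorem --- that the deficit $1-\nu_x(\R)$ records the fraction of the monad of $x$ on which $f$ is unlimited --- indicates that the construction in \cite{ema2} is genuinely nonstandard: one takes the hyperfinite measure $\dx^k\sum_{y\in\dom}\delta_{(y,f(y))}$ on the graph of $f$, passes to the associated Loeb measure, pushes it forward to $\overline{\Omega}\times\R$ under the standard part map (losing the mass carried by points where $f(y)$ is infinite), and disintegrates over $\Omega$; the bound $0\leq\nu_x(A)\leq 1$ and the exact mechanism of its failure to be an equality then fall out immediately, and one obtains a concrete description of $\nu_x$ in terms of the values of $f$ on the monad of $x$ that the paper exploits later (e.g.\ when identifying Dirac $\nu$ with $[u]$ via Theorem 3.13 of \cite{ema2}). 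Your route is instead the classical proof of the fundamental theorem of Young measures transplanted to the grid setting: Riesz--Markov in $x$ to produce the densities $h_g$, then a countable dense $\Q$-subspace of $\bcf(\R)$ and Riesz--Markov again in the value variable to disintegrate. Both work; yours is self-contained given only standard functional analysis, at the price of losing the pointwise description of $\nu_x$.

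Two places in your Step 2 should be tightened. First, positivity of the extension of $\ell_x$ from $\mathcal{G}$ to all of $C_0(\R)$ does not follow from positivity on the nonnegative elements of $\mathcal{G}$ alone, since a nonnegative $g\in C_0(\R)$ need not be a uniform limit of nonnegative elements of $\mathcal{G}$; you need the two-sided estimate $\inf g\leq h_g\leq\sup g$ a.e.\ (which your Step 1 argument does deliver, applied to $g-\inf g$ as well as to $g$) so that $\ell_x(g)=\lim_n\ell_x(g_n)\geq\lim_n\inf g_n\geq 0$. Second, the identity $\int_\R g\,d\nu_x=h_g(x)$ for $g\notin\mathcal{G}$ holds only off a $g$-dependent null set, obtained by comparing $\norm{h_g-h_{g_n}}_\infty\leq\norm{g-g_n}_\infty$ with the uniform convergence of $\int_\R g_n\,d\nu_x$; this suffices for \ref{young equivalence equation} but should be stated, since as written you assert the identity pointwise off the single set $Z$.
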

\begin{proof}
	See Theorem 4.13 and Theorem 4.14 of \cite{ema2}.
\end{proof}

We remark that the difference between $\nu_x(\R)$ and $1$ is due to $f$ being unlimited at some non-negligible fraction of the monad of $x$.

\begin{corollary}\label{corollario baricentro}
	For every $f \in \grid{\Omega}$, let $\nu : \Omega \rightarrow \rad(\R)$ the parametrized measure satisfying Theorem \ref{parametrized measures}, and let $f_r : \Omega \rightarrow \R$ be defined by
	$$
		f_r(x) = \int_{\R} d\nu_x.
	$$
	Then $f_r$ is a measurable function.
	Moreover, if $\norm{f}_1 \in \fin$, then $f_r \in L^1(\Omega)$.
\end{corollary}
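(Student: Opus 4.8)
The plan is to realise $f_r(x)$ as a pointwise limit of truncations of the identity $\tau\mapsto\tau$ integrated against $\nu_x$, and then to transfer, through Theorem \ref{parametrized measures}, the finiteness of the hyperfinite $L^1$-norm $\norm{f}_1$ to the finiteness of $\int_\Omega|f_r|\,dx$.

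First I would fix, for every standard $M\in\N$, functions $g_M^+,g_M^-\in\bcf(\R)$ with $0\le g_M^+(\tau)\le\tau^+:=\max(\tau,0)$ and $0\le g_M^-(\tau)\le\tau^-:=\max(-\tau,0)$ for all $\tau$, agreeing with $\tau^+$, resp.\ $\tau^-$, on $[-M,M]$, and increasing monotonically (as $M\to\infty$) to $\tau^+$, resp.\ $\tau^-$, at every $\tau$ — for instance $g_M^{\pm}(\tau)=\min(\tau^{\pm},M)\,\psi_M(\tau)$ for a suitable increasing sequence of compactly supported continuous cut-offs $\psi_M$ with $0\le\psi_M\le1$ and $\psi_M\uparrow1$. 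Since $\nu$ is a parametrized measure, hence measurable, each of the functions $x\mapsto\int_\R g_M^{\pm}\,d\nu_x$ is measurable, and by monotone convergence the functions
$$
f_r^+(x):=\int_\R\tau^+\,d\nu_x(\tau)=\lim_M\int_\R g_M^+\,d\nu_x\,,\qquad f_r^-(x):=\int_\R\tau^-\,d\nu_x(\tau)=\lim_M\int_\R g_M^-\,d\nu_x
$$
are measurable with values in $[0,+\infty]$. Hence $f_r=f_r^+-f_r^-$ is measurable wherever it is defined, which settles the first assertion.

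For the integrability claim, observe that $0\le\ns{(g_M^+)}(f(x))\le|f(x)|$ for every $x\in\dom$, by transfer from $0\le g_M^+(\tau)\le|\tau|$. Hence, for every $\varphi\in C^0_c(\Omega)$ with $\varphi\ge0$,
$$
0\le\langle\ns{(g_M^+)}(f),\ns{\varphi}\rangle\le\norm{\varphi}_\infty\,\varepsilon^k\!\!\sum_{x\in\dom}|f(x)|=\norm{\varphi}_\infty\,\norm{f}_1\,.
$$
Since $\norm{f}_1\in\fin$ the left-hand side is limited; taking standard parts, using Theorem \ref{parametrized measures} on the left, and then letting $M\to\infty$ by monotone convergence yields $\int_\Omega f_r^+(x)\varphi(x)\,dx\le\norm{\varphi}_\infty\,\sh{\norm{f}_1}$, and likewise with $f_r^-$. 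Finally I would exhaust $\Omega$ by an increasing sequence $\varphi_n\in C^0_c(\Omega)$ with $0\le\varphi_n\uparrow1$ (possible since $\Omega$, being open, is $\sigma$-compact) and apply monotone convergence once more to get $\norm{f_r^+}_1,\norm{f_r^-}_1\le\sh{\norm{f}_1}<\infty$. In particular $f_r^+$ and $f_r^-$ are finite almost everywhere, so $f_r=f_r^+-f_r^-$ is a genuine element of $L^1(\Omega)$; a slightly more careful bookkeeping with $g_M:=g_M^+-g_M^-$ (which satisfies $|g_M(\tau)|\le|\tau|$ and $g_M(\tau)\to\tau$ for every $\tau$) together with dominated convergence even yields the sharp bound $\norm{f_r}_1\le\sh{\norm{f}_1}$.

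The one point requiring care — and the reason the hypothesis $\norm{f}_1\in\fin$ is genuinely needed — is that $\nu_x$ records only the limited part of $f$ in the monad of $x$ and carries no a priori moment bound, so the barycentre $\int_\R\tau\,d\nu_x(\tau)$ may be an ill-defined $\infty-\infty$ in the absence of that hypothesis; this is precisely why the argument is organised around the separately monotone, $[0,+\infty]$-valued quantities $f_r^{\pm}$, and why the hyperfinite $L^1$-bound on $f$ is exactly what controls them. Everything else — measurability of $x\mapsto\int_\R g\,d\nu_x$ for $g\in\bcf(\R)$, the monotone and dominated convergence passages, and the exhaustion of $\Omega$ by test functions — is routine.
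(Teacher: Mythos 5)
Your proof is correct. Note that the paper itself gives no proof of Corollary \ref{corollario baricentro}: it is stated bare, as an immediate consequence of Theorem \ref{parametrized measures} (itself imported from \cite{ema2}), so there is no argument to compare yours against; your truncation-plus-monotone-convergence scheme is the natural way to fill that gap and every step checks out. Two small remarks. First, you have (rightly) read $f_r(x)=\int_{\R}d\nu_x$ as the barycentre $\int_{\R}\tau\,d\nu_x(\tau)$ --- as written the integrand is missing and the formula would literally give the total mass $\nu_x(\R)\in[0,1]$, making the $L^1$ claim vacuous on a bounded $\Omega$; your reading is the one consistent with the later use of $u_r(t,x)=\int_{\R}\tau\,d\nu(t,x)$ in Theorem \ref{equivalenza smarazzo}, and it is worth saying explicitly that this is the interpretation being proved. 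Second, your closing observation is the substantive point: without $\norm{f}_1\in\fin$ the barycentre can be an undefined $\infty-\infty$, so the decomposition into the $[0,+\infty]$-valued quantities $f_r^{\pm}$ is not mere bookkeeping but what makes the statement meaningful; under the stated hypothesis your argument even yields the quantitative bound $\norm{f_r}_1\leq\sh{\norm{f}_1}$, which the paper does not record but uses implicitly in Section \ref{coherence}.
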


\section{The grid function formulation for the ill-posed PDE} \label{ns models}

In this section, we will derive the grid function formulation for the ill-posed problem \ref{mark} by generalizing an elementary model for the diffusion equation developed by Hanqiao, St.\ Mary and Wattenberg \cite{watt}.
This approach will allow us to choose a suitable grid function counterpart to the operator $\Delta \phi(u)$.
Under the hypotheses \ref{ipotesi di partenza} over $\phi$, we will prove that the grid function formulation always has a unique well-defined solution.

\subsection{Derivation of the grid function formulation}
For a matter of commodity, in the derivation of the model we will use the image of a population that moves around the grid $\Lambda^k$ according to some basic rules.
The initial distribution of the population around the grid is described by an internal function $u_0 : \Lambda^k \rightarrow \ns{[0,1]}$ satisfying $\int_{\Lambda^k} u_0(\x)d\Lambda^k = c \in \ns{\R}$.
The value $u_0(\x)$ determines the number of individuals of the population inhabiting point $\x$ at time $t = 0$.

Let $\dt = \varepsilon^2$.
The population moves around the grid according to the following rules:
\begin{itemize}
	\item the $n$-th move occurs between time $(n-1)\dt$ and $n\dt$;
	\item at each jump the population at each grid point breaks into $(2k + 1)$ smaller groups:
	\begin{itemize}
		\item for $i = 1, \ldots, k$, a fraction $p_i(u((n-1)\dt,\x))$ of the population at $\x$ jumps to $\x + \dx \vec{e_i}$;
		\item for $i = 1, \ldots, k$, a fraction $p_i(u((n-1)\dt,\x))$ of the population at $\x$ jumps to $\x - \dx \vec{e_i}$;
		\item the remaining fraction $1 - 2\sum_{i = 1}^{k} p_i(u((n-1)\dt,\x))$ of the population at $\x$ remains at $\x$.
	\end{itemize}
\end{itemize}
In the above description, the functions $p_i$ are internal functions $p_i : \ns{\R} \rightarrow \ns{\R}$ satisfying
\begin{itemize}
	\item $0 \leq  p_i(r)$ for all $r \in \ns{\R}$;
	\item $\sum_{i = 1}^{k} p_i(r) \leq 1/2$ for all $r \in \ns{\R}$
\end{itemize}
for all $i = 1, \ldots, k$.
The properties of the functions $p_i$ determine the criteria used by the population to choose whether and how to jump to a nearby grid point.
In particular, in the model outlined above an individual chooses its next movement to move according only to local informations.
If the functions $p_i$ are constant and do not depend on $i$, then the above model coincides with the nonstandard model of diffusion discussed in \cite{watt}.
More complex behaviour can be described by different choices of functions $p_i$ and by introducing a spatial bias.

If we denote by $u(t,x)$ the population present at time $t$ at point $\x$, 
then by arguing as in section III of \cite{watt} we deduce that $u(t,\x)$ evolves according to the finite difference initial value problem
\begin{eqnarray*}
	u(0,\x) & = & u_0(\x) \\
	u((n+1)\dt,\x) & = & \left( 1-2\sum_{i = 1}^{k} p_i(u(n\dt,\x)) \right)u(n\dt,\x) \\
	& & + \sum_{i = 1}^{k} p_i(u(n\dt,\x+\dx e_i))u(n\dt,\x+\dx e_i) \\
	& & + \sum_{i = 1}^{k} p_i(u(n\dt,\x-\dx e_i))u(n\dt,\x-\dx e_i))
\end{eqnarray*}
From the above equation,
if we define $\phi_i(u(n\dt,\x)) = p_i(u(n\dt,\x))u(n\dt,\x)$, we obtain
$$
u((n+1)\dt,\x) - u(n\dt,\x) = \sum_{i = 1}^{k} \bigg[\phi_i(u(n\dt,\x+\dx e_i))
-2 \phi_i(u(n\dt,\x))
+ \phi_i(u(n\dt,\x-\dx e_i))\bigg].
$$
At this point, we divide both sides of the above equation by $\dt = \dx^2$ and obtain
$$
\frac{u((n+1)\dt,\x) - u(n\dt,\x)}{\dt} = \sum_{i =1}^{k} \D_i^+ \D_i^- \phi_i(u(\x,t)).
$$
If $\phi_i = \phi$ for all $i = 1, \ldots, k$, i.e.\ if the population moves without spatial bias, from the above equality we deduce
\begin{equation}\label{finite difference mark}
\frac{u((n+1)\dt,\x) - u(n\dt,\x)}{\dt} = \lap \phi(u).
\end{equation}

Notice that the above equality only holds for all $x \not \in \partial_\Lambda\dom$, since if $x \in \partial_\Lambda\dom$ the term $\lap \ns{\phi(u(t,x))}$ depends upon points on the outside of $\dom$, sometimes called ghost points in the literature of numerical methods for the solution of differential equations.
Drawing inspiration from this field, at the points of $\partial_\Lambda\dom$ we approximate the Laplacian with Neumann boundary conditions with one-sided difference formulas that involve these ghost points.
More precisely, let
$$
I_x^+ = \{ i : x + \varepsilon e_i \not \in \dom \} \text{ and } I_x^- = \{ i : x - \varepsilon e_i \not \in \dom \}.
$$
Notice that $I_x^+ \cup I_x^- \ne \emptyset$ if and only if $x \in \partial_\Lambda \dom$; moreover, whenever $i \in I_x^+$ $\D^+_i \ns{\phi(u(x,t))}$ is the $i$-th grid derivative that points towards the exterior of $\dom$, and the same applies to $\D^-_i \ns{\phi(u(t,x))}$ whenever $i \in I_x^-$.
As a consequence, an approximation of the Neumann boudary conditions $\frac{\partial \phi(u)}{\normal}=0$ is obtained by imposing that these grid derivatives satisfy
\begin{equation}\label{neumann}
\sum_{i \in I_x^+} \D^+_i \ns{\phi(u(t,x))} = 0 \text{ and } \sum_{i \in I_x^-} \D^-_i \ns{\phi(u(t,x))}=0
\end{equation}
for all $x \in \dom$.
By substituting these equalities in the formula of $\lap \ns{\phi(u(t,x))}$, we obtain the following first-order discrete approximation of the Laplacian with Neumann boundary conditions:
\begin{eqnarray*}
	\lap \ns{\phi(u(t,x))}
	&=&
	-\varepsilon^{-1}\sum_{i \in I_x^+} \D^-_i \ns{\phi(u(t,x))}
	+
	\varepsilon^{-1}\sum_{i \in I_x^-} \D^+_i \ns{\phi(u(t,x))}+\\
	&&+
	\sum_{i \not \in I_x^+ \cup I_x^-}\D_i^+ \D_i^- \phi(u(t,x)).
\end{eqnarray*}
More sophisticated approximations of the normal derivative at boundary points of $\Omega$ can be obtained from different schemes; for an overview we refer to \cite{numerical1} and to \cite{numerical2}.

The above argument suggests that the operator  $\lap \ns{\phi}$ is a coherent generalization to the space of grid functions of the operator $F: L^\infty(\Omega)\cap H^1(\Omega) \rightarrow (C^1(\overline{\Omega}))'$ defined by
\begin{equation}\label{f}
\langle F(u), \varphi \rangle_{C^1(\Omega)} = - \int_{\Omega} \nabla \phi(u) \cdot \nabla \varphi dx
\end{equation}
for all $\varphi \in C^1(\overline{\Omega})$.
We will now prove that $\lap \ns{\phi}$ is indeed coherent with $F$ in the sense of Theorem 5.8 of \cite{ema2}.
In particular, we will prove that whenever $u$ is nearstandard in $H^1(\Omega)$, the following conditions are satisfied:
\begin{enumerate}
	\item $[\lap \ns{\phi}(u)]\in (C^1(\overline{\Omega}))'$;
	\item the above expression is well-defined in the sense that it does not depend upon infinitesimal perturbations over $u$;
	\item the functional $[\lap \ns{\phi}(u)]$ is equal to the functional $F([u])$;
	\item conversely, if $\overline{u} \in H^1(\Omega)$, then $[\lap \ns{\phi}(P(\overline{u}))]$ is equal to $F(\overline{u})$.
\end{enumerate}
Notice how condition (1) of Theorem 5.8 of \cite{ema2} is replaced by a different coherence condition that depends upon the definition of $F$.

\begin{proposition}\label{coerenza formulazione discreta}
	Let $\phi$ be a standard function satisfying hypotheses \ref{ipotesi di partenza}, and let $F$ be defined by equation \ref{f}.
	Suppose also that $u \in \grid{\Omega}$ is nearstandard in $H^1(\Omega)$ and that $\norm{u}_\infty \in \fin$.
	Then
	\begin{enumerate}
		\item $[\lap \ns{\phi}(u)] \in (C^1(\overline{\Omega}))'$;
		\item if $v \in \grid{\Omega}$ satisfies $\norm{v}_{H^1} \sim 0$, then $[\lap \ns{\phi}(u)]=[\lap \ns{\phi}(u+v)]$;
		\item $[\lap \ns{\phi}(u)] = F([u]) \in (C^1(\overline{\Omega}))'$;
		\item for all $\overline{u} \in L^\infty(\Omega)\cap H^1(\Omega)$,
		$[\lap \ns{\phi}(P(\ns{\overline{u}}))] = F(\overline{u})$.
	\end{enumerate}
\end{proposition}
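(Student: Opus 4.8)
The plan is to factor $\lap\ns{\phi}$ as the Neumann grid Laplacian $\lap$ precomposed with the substitution $w\mapsto\ns{\phi}(w)$, and to deduce all four statements from the coherence of $\lap$ alone, i.e.\ from Theorem~4.8 of \cite{ema2}: there it is proved that if $g\in\grid{\Omega}$ is nearstandard in $H^1(\Omega)$ with $\norm{g}_\infty\in\fin$, then $[\lap g]\in(C^1(\overline{\Omega}))'$ and $[\lap g]=G([g])$, where $\langle G(v),\varphi\rangle_{C^1(\Omega)}=-\int_\Omega\nabla v\cdot\nabla\varphi\,dx$, and moreover $[\lap P(\ns{v})]=G(v)$ for every standard $v\in H^1(\Omega)$. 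Thus the whole proposition reduces to the claim that \emph{if $w\in\grid{\Omega}$ is nearstandard in $H^1(\Omega)$ with $\norm{w}_\infty\in\fin$, then $\ns{\phi}(w)$ is nearstandard in $H^1(\Omega)$, with $\norm{\ns{\phi}(w)}_\infty\in\fin$ and $[\ns{\phi}(w)]=\phi([w])$.} Indeed, granting this and applying Theorem~4.8 of \cite{ema2} to $g=\ns{\phi}(w)$ gives $[\lap\ns{\phi}(w)]=G(\phi([w]))$; since $[w]\in L^\infty(\Omega)\cap H^1(\Omega)$ forces $\phi([w])\in L^\infty(\Omega)\cap H^1(\Omega)$ by the chain rule, $G(\phi([w]))=F([w])$ is exactly the definition \ref{f} of $F$. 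Taking $w=u$ gives (1) and (3); taking $w=P(\ns{\overline{u}})$, which is nearstandard in $H^1(\Omega)$ with $[P(\ns{\overline{u}})]=\overline{u}$ and $\norm{P(\ns{\overline{u}})}_\infty\le\norm{\overline{u}}_\infty$, gives (4); and taking $w=u+v$, which still is nearstandard in $H^1(\Omega)$ and has $[u+v]=[u]$, gives (2).

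To prove the claim, set $\overline{w}:=[w]$ and $\norm{w}_\infty\le M\in\fin$. Then $\overline{w}\in L^\infty(\Omega)\cap H^1(\Omega)$ (the bound $\norm{\overline{w}}_\infty\le\sh{M}$ from $L^2$-nearstandardness of $w$ and $|w|\le M$), so $\phi(\overline{w})\in L^\infty(\Omega)\cap H^1(\Omega)$ with $\nabla\phi(\overline{w})=\phi'(\overline{w})\nabla\overline{w}$. Arguing as in the proof of Lemma \ref{lemma derivata H^1}, the transferred mean value theorem gives $\D_i^-\ns{\phi}(w)(x)=\ns{\phi}'(\xi_x)\,\D_i^- w(x)$ with $\xi_x$ between $w(x-\dx e_i)$ and $w(x)$, and $|w|\le M$ yields $|\ns{\phi}'(\xi_x)|\le C:=\max_{[-M,M]}|\phi'|\in\fin$ and $\norm{\ns{\phi}(w)}_\infty\le\max_{[-M,M]}|\phi|\in\fin$; hence $\norm{\grad^-\ns{\phi}(w)}_2\le C\norm{\grad^- w}_2\in\fin$, and since also $\norm{\ns{\phi}(w)-\ns{\phi}(P(\overline{w}))}_2\le C\norm{w-P(\overline{w})}_2\approx 0$ we get that $\ns{\phi}(w)$ is $L^2$-nearstandard with $[\ns{\phi}(w)]=\phi(\overline{w})$. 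The remaining --- and only delicate --- point is to upgrade $\norm{\grad^-\ns{\phi}(w)}_2\in\fin$ to the nearstandardness $\norm{\grad^-\ns{\phi}(w)-P(\nabla\phi(\overline{w}))}_2\approx 0$. Decomposing
\[
\D_i^-\ns{\phi}(w)-\D_i^-\ns{\phi}(P(\overline{w}))=\ns{\phi}'(\xi_x)\bigl(\D_i^- w-\D_i^- P(\overline{w})\bigr)+\bigl(\ns{\phi}'(\xi_x)-\ns{\phi}'(\eta_x)\bigr)\D_i^- P(\overline{w}),
\]
with $\eta_x$ the corresponding point for $P(\overline{w})$, the first summand is infinitesimal in $L^2$ because $w$ is nearstandard in $H^1(\Omega)$; for the second, for each standard $\delta>0$ the internal set $\{|\xi_x-\eta_x|>\delta\}$ has infinitesimal $\leb$-measure (it is contained in a union of three sets of the form $\{|h|>\delta/4\}$ with $\norm{h}_2\approx 0$, namely $h=\dx\D_i^- w,\ w-P(\overline{w}),\ \dx\D_i^- P(\overline{w})$, and one applies Chebyshev), so, using the transferred modulus of continuity of $\phi'$ on $[-M,M]$ off that set, the bound $|\ns{\phi}'|\le C$ on it, and the equi-integrability of $\{|\D_i^- P(\overline{w})|^2\}$ (which is $L^1$-near $|\partial_i\overline{w}|^2\in L^1(\Omega)$, difference quotients of an $H^1$ function converging to the weak derivative in $L^2$), the $L^2$ norm of the second summand is, up to infinitesimals, at most a finite multiple of the modulus of continuity at $\delta$; since $\delta>0$ is an arbitrary standard number, that norm is infinitesimal. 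Hence $\grad^-\ns{\phi}(w)$ is $L^2$-nearstandard with standard part $\phi'(\overline{w})\nabla\overline{w}=\nabla\phi(\overline{w})$, so $\ns{\phi}(w)$ is nearstandard in $H^1(\Omega)$ with $[\ns{\phi}(w)]=\phi(\overline{w})$, proving the claim.

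This proves parts (1)--(4); the mechanism inside Theorem~4.8 of \cite{ema2} is that summation by parts together with the Neumann conditions \ref{neumann} yields $\langle\lap\ns{\phi}(u),\ns{\varphi}\rangle=-\langle\grad^-\ns{\phi}(u),\grad^-\ns{\varphi}\rangle$ for all $\varphi\in C^1(\overline{\Omega})$, after which $\grad^-\ns{\varphi}\approx\ns{(\nabla\varphi)}$ uniformly on $\dom$ and the $L^2$-nearstandardness of $\grad^-\ns{\phi}(u)$ give $\sh{\langle\lap\ns{\phi}(u),\ns{\varphi}\rangle}=-\int_\Omega\nabla\phi([u])\cdot\nabla\varphi\,dx=\langle F([u]),\varphi\rangle_{C^1(\Omega)}$. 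Notice that only the $C^1$ regularity of $\phi$ and the bound $\norm{u}_\infty\in\fin$ are used; the non-monotonicity in Hypotheses~\ref{ipotesi di partenza} plays no role. The main obstacle is exactly the $L^2$-nearstandardness of $\grad^-\ns{\phi}(w)$: because $w$ is controlled only in the $H^1$ norm it may oscillate when $k\ge 2$, so one cannot argue pointwise that $\ns{\phi}'(\xi_x)\approx\phi'(\overline{w}(\sh{x}))$, and the passage through the modulus of continuity of $\phi'$ and the equi-integrability of the difference quotients of $\overline{w}$ is essential.
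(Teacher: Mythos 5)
Your proposal is correct in substance but follows a genuinely different route from the paper. The paper never proves that $\ns{\phi}(u)$ is nearstandard in $H^1(\Omega)$: it only needs the weaker, distributional identification $[\grad^-\ns{\phi}(u)]=[\phi'(u)]\nabla[u]$ of Lemma \ref{lemma derivata H^1}, which it feeds into the summation-by-parts identity \ref{support} and Theorem \ref{mainthm}; the identification $[\ns{\phi}(u)]=\phi([u])$ needed for point (3) is then obtained through the Young measure machinery (Theorem 3.13 of \cite{ema2}), the measure associated to $u$ being Dirac a.e.\ precisely because $u$ is nearstandard in $H^1(\Omega)$. You instead prove the stronger statement that $\ns{\phi}(u)$ is itself nearstandard in $H^1(\Omega)$ with $[\ns{\phi}(u)]=\phi([u])$, and then reduce to the coherence of the plain Neumann grid Laplacian. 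What your route buys is rigour exactly where the paper is thinnest: the conclusion of Lemma \ref{lemma derivata H^1} silently passes from a pointwise product to a product of equivalence classes, and your Chebyshev/modulus-of-continuity/equi-integrability argument is in effect the missing justification (convergence in measure of the intermediate points, plus S-integrability of $|\D_i^- P(\overline{u})|^2$, upgrading the product to a strong $L^2$ limit). What it costs is length, and a reliance on a specific reading of Theorem 4.8 of \cite{ema2}, which in this paper plays the role of a general coherence template rather than a ready-made statement about $\lap$; since you also spell out the summation-by-parts mechanism yourself, nothing essential is outsourced.

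Two caveats. First, your reduction of part (2) applies the key claim to $w=u+v$, which requires $\norm{u+v}_\infty\in\fin$; this does not follow from $\norm{v}_{H^1}\sim 0$ alone once $k\geq 2$, since the discrete $H^1$ norm does not control the $L^\infty$ norm in higher dimensions. The paper's own proof of part (2) has the same defect, as its appeal to S-continuity of $\ns{\phi}$ and $\ns{\phi'}$ tacitly uses $\norm{v}_\infty\sim 0$; so this is a gap shared with the source rather than one introduced by you, but you should record the extra hypothesis you are actually using. Second, to conclude $[\ns{\phi}(w)]=\phi(\overline{w})$ from $\norm{\ns{\phi}(w)-\ns{\phi}(P(\overline{w}))}_2\sim 0$ you still need $[\ns{\phi}(P(\overline{w}))]=\phi(\overline{w})$, i.e.\ that composition with $\phi$ commutes with the projection $P$ up to an $L^2$ infinitesimal; this is true (via Lebesgue points, or via the Dirac Young measure argument the paper uses) but should be stated.
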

\begin{proof}
	Point (1).
	By the discrete summation by parts formula and by taking into account the Neumann boundary conditions \ref{neumann}, for all $\varphi \in S^{1}(\overline{\Omega}_\Lambda)$ we have the equality
	\begin{equation}\label{support}
	\langle \lap \ns{\phi}(u), \varphi \rangle = - \left\langle \grad^- \phi\left(u\left(x+\varepsilon\textstyle\sum_{i =1}^k e_i\right)\right), \grad^+ \varphi \right\rangle.
	\end{equation}
	Taking into account Theorem \ref{mainthm} and Lemma \ref{lemma derivata H^1}, we have
	\begin{eqnarray}
	\notag
	\sh{\langle} \lap \ns{\phi}(u), \varphi \rangle
	&=&
	\ldual [\lap \ns{\phi}(u)], \sh{\varphi} \rangle_{C^1(\Omega)} \\ \notag
	&=&
	- \int_\Omega [\nabla^- \ns{\phi}(u)] \cdot \nabla \sh{\varphi} dx \\
	\label{important}
	&=&
	-\int_\Omega \nabla[ \ns{\phi}(u)] \cdot \nabla \sh{\varphi} dx\\
	&=&
	-\int_\Omega [\phi'(u)]\nabla[u] \cdot \nabla \sh{\varphi} dx.
	\label{qui}
	\end{eqnarray}
	Since $u$ is nearstandard in $H^1(\Omega)$, $[u] \in H^1(\Omega)$.
	Moreover, the hypothesis $\norm{u}_\infty \in \fin$ ensures that $[u]\in L^\infty(\Omega)$.
	As a consequence, $[u]\in L^\infty(\Omega)\cap H^1(\Omega)$.
	This and hypotheses \ref{ipotesi di partenza} over $\phi$ entail that the integral \ref{qui} is finite, so that $[\lap \ns{\phi}(u)] \in (C^1(\overline{\Omega}))'$.
	
	Part (2).
	By S-continuity of $\ns{\phi}$ and of $\ns{\phi'}$, we have $$\norm{\ns{\phi}(u)-\ns{\phi}(u+v)}_\infty \sim \norm{\ns{\phi'}(u)-\ns{\phi'}(u+v)}_\infty \sim 0.$$
	By Lemma \ref{lemma 4.2}, $\norm{\ns{\phi'}(u)-\ns{\phi'}(u+v)}_\infty \sim 0$ implies $\ns{\phi'}(u)-\ns{\phi'}(u+v)\equiv 0$, that is equivalent to $\ns{\phi'}(u)\equiv\ns{\phi'}(u+v)$.
	We deduce that $[\ns{\phi'}(u)] = [\ns{\phi'}(u+v)]$.
	The assumption $\norm{v}_{H^1} \sim 0$ entails also $$\stgrad[u]=[\grad^+ u] = [\grad^+ (u+v)]=\stgrad[u+v],$$
	so that
	$$[\grad^+\ns{\phi}(u)] = [\phi'(u)]\nabla[u] = [\phi'(u+v)]\nabla[u+v] = [\grad^+\ns{\phi}(u+v)].$$
	Thanks to the previous equalities, from equation \ref{qui} we obtain
	$$
	\ldual [\lap \ns{\phi}(u)] - [\lap \ns{\phi}(u+v)], \sh{\varphi} \rangle_{C^1(\Omega)} =
	\int_\Omega ([\phi'(u)]\nabla^-[u] - [\phi'(u+v)]\nabla^-[u+v]) \cdot \nabla \sh{\varphi} dx = 0,
	$$
	so that the proof of point (2) is concluded.

	In order to prove point (3), notice that the hypotheses over $u$ entail that the Young measure associated to $u$ is Dirac for almost every $x \in \Omega$.
	As a consequence, by Theorem 3.13 of \cite{ema2} the Young measure associated to $u$ is a.e.\ equal to $[u]$.
	This implies $[\ns{\phi}(u)] = \phi([u])$, so that from equality \ref{important} we deduce that $[\lap \ns{\phi}(u)]=F([u])$, as desired.

	Part (4) of the assertion is a consequence of part (3), since if $\overline{u} \in L^\infty(\Omega)\cap H^1(\Omega)$, then Lemma \ref{lemma 4.2} and Lemma \ref{questo corollario} ensure that $P(\ns{\overline{u}})$ satisfies the hypotheses (3) and that $[P(\ns{\overline{u}})]=\overline{u}$.
\end{proof}

\subsection{The grid function formulation for the ill-posed PDE}
We now have all of the elements to formulate problem \ref{mark} in the sense of grid functions.

\begin{definition}
	The functions $[u], [\ns{\phi(u)}] : [0,+\infty) \rightarrow \grid{\Omega}/\equiv$ are called a grid solution of \ref{mark} if $u$
	satisfies the following system of ODEs:
	\begin{equation}\label{pb discreto}
	\left\{ \begin{array}{c}
	u_t =\Delta_\Lambda\ns{\phi}(u);\\
	u\left(0,x\right)=\ns{u}_{0}\left(x\right).
	\end{array}\right.
	\end{equation}
\end{definition}

\begin{remark}
	A standard counterpart of system \ref{pb discreto} with $\dom = [0,1]\cap\Lambda$ and with standard $N$ has been used by Lizana and Padron \cite{sd} to describe the dynamics of a population inhabiting a finite collection of $N+1$ equally spaced points $\{0,\ldots,i/N,\ldots,1\}$ on the interval $[0,1]$.
	By the transfer principle, many properties of the finite model discussed in section 3 of \cite{sd} hold also for the hyperfinite system \ref{pb discreto}.
	Conversely, many of the results discussed in Sections \ref{properties} and \ref{sezione riemann} of this paper can be applied to this finite model by omitting the stars and by taking $N \in \N$.
\end{remark}

\begin{remark}
	We are interested not only in the solutions to problem \ref{pb discreto}, but also in the coherence with the measure-valued solutions to problem \ref{mark}.
	For this reason, we will restrict our attention to the study of problem \ref{pb discreto} under the hypotheses \ref{ipotesi di partenza} over $\phi$, and where the initial data is the nonstandard extension of a function $u_0 \in L^\infty(\Omega)$ that satisfies $u_0(x) \geq 0$ for all $x \in \Omega$.
	
	Notice however that problem \ref{pb discreto} makes sense for an arbitrary $f \in \ns{C^1(\R)}$ instead of $\ns{\phi}$ and for arbitrary initial data.
	This general form of problem \ref{pb discreto} will be addressed in a subsequent paper.
\end{remark}

Problem \ref{pb discreto} is a hyperfinite system of ordinary differential equations: as such, the existence of solutions and their properties can be studied by the theory of ordinary differential equations.
These results, in turn, apply to the grid solution for problem \ref{mark}.

\begin{theorem}\label{esistenza}
	There exists a maximal interval $I \subseteq \hR$ such that Problem \ref{pb discreto} has a unique solution $u \in \ns{C^1(I,\grid{\Omega})}$;
	Moreover, 
	$\norm{u(t)}_1 = \norm{u_0}_1$ for all $t \in I$.
\end{theorem}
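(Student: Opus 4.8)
The plan is to treat Problem \ref{pb discreto} as a Cauchy problem for an internal system of ordinary differential equations in the hyperfinite-dimensional space $\grid{\Omega}$, and to apply (the nonstandard transfer of) the Picard--Lindel\"of theorem together with a conservation-of-mass computation that yields a priori $L^1$ bounds. First I would observe that the right-hand side $u \mapsto \lap\ns{\phi}(u)$ is an internal map from $\grid{\Omega}$ to itself: by Definition \ref{fd} and the definition of $\lap$ on $\partial_\Lambda\dom$, each component $(\lap\ns{\phi}(u))(x)$ is a finite $\ns{}$-linear combination of the values $\ns{\phi}(u(y))$ for $y$ ranging over the finitely many grid neighbours of $x$, divided by $\dx^2$. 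Since $\phi \in C^1(\R)$, its nonstandard extension $\ns{\phi}$ is internally $C^1$ and hence locally Lipschitz on every $\ns{}$-bounded subset of $\hR$; composing with the (internal, finite-dimensional) neighbour-evaluation maps shows that $\lap\ns{\phi}$ is internally locally Lipschitz as a map $\grid{\Omega}\to\grid{\Omega}$, where $\grid{\Omega}$ carries, say, the internal $\norm{\cdot}_\infty$ norm. By transfer of the standard existence-and-uniqueness theorem for ODEs with a locally Lipschitz vector field on $\ns{\R}^{|\dom|}$ (note $|\dom|$ is hyperfinite), there is a maximal interval $I\subseteq\hR$, open in the appropriate sense with left endpoint $0$, on which \ref{pb discreto} has a unique solution $u\in\ns{C^1}(I,\grid{\Omega})$; uniqueness on $I$ is part of the transferred statement.

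Next I would establish the conservation law $\norm{u(t)}_1 = \norm{u_0}_1$. Here the key point is that $\ns{u}_0(x)\geq 0$ so, at least initially, $|u| = u$ and $\norm{u(t)}_1 = \dx^k\sum_{x\in\dom} u(t,x)$. Differentiating in $t$ and using the equation,
\[
\frac{d}{dt}\,\dx^k\!\!\sum_{x\in\dom} u(t,x)
= \dx^k\!\!\sum_{x\in\dom} (\lap\ns{\phi}(u))(x)
= \langle \lap\ns{\phi}(u), \mathbf{1}\rangle,
\]
and the discrete summation-by-parts identity used to derive \ref{support}, applied with the constant test function $\mathbf 1$ (for which $\grad^+\mathbf 1 = 0$) together with the Neumann boundary conditions \ref{neumann}, forces this to be $0$. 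Hence $\sum_x u(t,x)$ is constant in $t$. It remains to argue that $u(t,x)$ stays nonnegative, so that $\norm{u(t)}_1$ really equals $\dx^k\sum_x u(t,x)$ and not merely $\dx^k|\sum_x u(t,x)|$: for this I would use the structure of the finite-difference scheme as in the population derivation, rewriting the evolution, for each fixed $x$, as $u_t(t,x) = \sum_i \D_i^+\D_i^- \phi(u)(x)$ and noting $\phi_i(u) = p_i(u)u$ with $p_i\geq 0$, $\sum_i p_i\leq 1/2$; an internal invariance (barrier) argument for the positive cone — if some component first hits $0$, its time derivative there is $\geq 0$ because all neighbouring contributions $p_i(u(y))u(y)$ are $\geq 0$ — shows the cone $\{u\geq 0\}$ is forward invariant. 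This gives $\norm{u(t)}_1 = \norm{u_0}_1$ throughout $I$.

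The main obstacle I anticipate is the nonnegativity/invariance step, and relatedly the bookkeeping at the Neumann boundary. The vector field $\lap\ns{\phi}$ is only locally Lipschitz, not globally so (because $\phi'$ is unbounded when $u^+=+\infty$, $\phi$ being monotone increasing for large argument), so one cannot immediately conclude $I = [0,+\infty)$; the theorem as stated only claims a maximal interval, which is fine, but to run the positivity barrier argument cleanly one should work on compact subintervals of $I$ where $\norm{u(t)}_\infty$ stays $\ns{}$-bounded and invoke Nagumo-type invariance there, then patch. An alternative, perhaps cleaner, route to positivity is to note that on $[0,1]\cap\Lambda$ with standard $N$ the finite system is exactly the model of \cite{sd}, where nonnegativity is known; transfer then yields it for the hyperfinite system. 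Either way, once positivity and the summation-by-parts cancellation are in hand, the conservation of $\norm{\cdot}_1$ and hence the full statement follow; the ODE-theoretic part is routine via transfer.
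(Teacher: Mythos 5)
Your proposal is correct and follows essentially the same route as the paper: existence and uniqueness by transfer of standard ODE theory, and conservation of mass by computing $\frac{d}{dt}\int_{\dom} u\, d\Lambda^k = \int_{\dom}\lap\ns{\phi}(u)\, d\Lambda^k = 0$ via summation by parts and the Neumann boundary conditions \ref{neumann}. The nonnegativity issue you rightly flag (needed to identify $\int_{\dom} u\,d\Lambda^k$ with $\norm{u(t)}_1$) is not addressed inside the paper's proof of Theorem \ref{esistenza} but is established separately, by exactly the barrier argument you sketch, in the subsequent Proposition \ref{ex invariant set}.
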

\begin{proof}
	By transfer, existence and uniqueness can be deduced from the theory of ordinary differential equations.
	
	In order to prove that $\norm{u(t)}_1 = \norm{u_0}_1$ for all $t \in I$, notice that it holds
	$$
	\frac{d}{dt} \int_{\Omega_\Lambda} u(t,x) d\Lambda^k
	=
	\int_{\Omega_\Lambda} u_t(t,x) d\Lambda^k
	=
	\int_{\Omega_\Lambda} \Delta_\Lambda\phi(u(t,x)) d\Lambda^k.
	$$
	Thanks to the Neumann boundary conditions, $\int_{\Omega_\Lambda} \Delta_\Lambda\phi(u(t,x)) d\Lambda^k = 0$, so that the mass of the solution is preserved.
\end{proof}

\begin{proposition}[Invariant set]\label{ex invariant set}
	For all $t \in I$ and for all $x \in \dom$,
	\begin{enumerate}
		\item if $u^+ = +\infty$, then $u(t,x) \geq 0$.
		\item if $u^+ < +\infty$, then $u(t,x) \in [0,\max\{ \norm{\ns{u}_0}_\infty, S_3(\phi(u^-)) \} ]$.
		In particular, $\norm{u(t)}_\infty \in \fin$ is homogeneously bounded for all $t \geq 0$.
	\end{enumerate} 
	
\end{proposition}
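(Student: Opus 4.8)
The plan is to show that each of the two stated invariant sets is preserved by the flow of the hyperfinite ODE system \ref{pb discreto}, using the maximum-principle structure of the discrete Laplacian. In both cases I would work at a point where the relevant extremum of $x \mapsto u(t,x)$ over the hyperfinite set $\dom$ is attained, and examine the sign of $u_t$ there.

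For part (1), suppose $u^+ = +\infty$ and assume towards a contradiction that $u$ becomes negative somewhere. Let $t_0 = \sup\{ t \in I : u(s,x) \geq 0 \text{ for all } s \leq t, x \in \dom \}$; by continuity $u(t_0,\cdot) \geq 0$ and, since $\dom$ is hyperfinite, there is a point $x_0$ with $u(t_0,x_0) = 0$ and $u_t(t_0,x_0) \leq 0$. At such a point I would expand $\lap\ns{\phi}(u)(t_0,x_0)$ using its explicit interior/boundary form: away from the boundary it is $\sum_i \big[\ns{\phi}(u(x_0+\dx e_i)) - 2\ns{\phi}(u(x_0)) + \ns{\phi}(u(x_0-\dx e_i))\big]/\dx^2$, and since $\phi(0)=0$ and $\phi \geq 0$ on $[0,+\infty)$, with all neighbouring values $\geq 0$, every term $\ns{\phi}(u(x_0 \pm \dx e_i)) \geq 0 = \ns{\phi}(u(x_0))$, so $\lap\ns{\phi}(u)(t_0,x_0) \geq 0$; the Neumann boundary terms only drop some of these nonnegative contributions, so the inequality persists on $\partial_\Lambda\dom$ as well. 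Hence $u_t(t_0,x_0) \geq 0$, and in fact one argues that $u$ cannot cross zero — more carefully, one shows $u(t,x_0) \geq 0$ is maintained for $t$ slightly beyond $t_0$, contradicting the definition of $t_0$. A clean way to make this rigorous is to note that at $x_0$ the equation reads $u_t(t_0,x_0) = (\text{nonnegative sum of }\ns{\phi}\text{ of nonnegative quantities}) \geq 0$, which already contradicts $u_t(t_0,x_0) < 0$ unless $u_t(t_0,x_0)=0$, and then a standard iteration/Grönwall-type argument on the first point to go negative closes the case.

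For part (2), assume $u^+ < +\infty$ and set $M = \max\{\norm{\ns{u}_0}_\infty, S_3(\phi(u^-))\}$; note $M \in \fin$ since $\norm{\ns u_0}_\infty = \norm{u_0}_\infty \in \R$ and $S_3(\phi(u^-))$ is a fixed standard real. The lower bound $u \geq 0$ follows exactly as in part (1). For the upper bound I would run the same extremum argument at a point $x_1$ where $u(t,\cdot)$ attains its maximum over $\dom$: if that maximum value $m$ exceeds $M$, then in particular $m > S_3(\phi(u^-))$, which places $m$ on the increasing right branch of $\phi$ above $u^+$, so $\ns\phi$ is increasing in a neighbourhood of $m$; since every neighbouring value $u(x_1 \pm \dx e_i) \leq m$ we get $\ns\phi(u(x_1 \pm \dx e_i)) \leq \ns\phi(m)$, whence each interior second-difference term is $\leq 0$ and the Neumann boundary form again only improves this, giving $\lap\ns\phi(u)(t,x_1) \leq 0$, i.e.\ $u_t \leq 0$ at the maximum. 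This shows the set $[0,M]$ is forward-invariant, so $\norm{u(t)}_\infty \leq M \in \fin$ for all $t \in I$, and this bound is homogeneous (independent of the infinitesimal $\dx$), which is the assertion.

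The main obstacle is handling the boundary points carefully: one must verify that the Neumann discretization of the Laplacian, in the explicit form
$$
\lap \ns{\phi(u(x))} = -\sum_{i \in I_x^+} \D^-_i \ns{\phi(u(x))} + \sum_{i \in I_x^-} \D^+_i \ns{\phi(u(x))} + \sum_{i \not\in I_x^+ \cup I_x^-}\D_i^+ \D_i^- \ns\phi(u(x)),
$$
still preserves the sign pattern used at interior points — i.e.\ that dropping the "missing" neighbour terms does not flip the sign of $\lap\ns\phi(u)$ at an extremum. This is where the condition $\phi(0)=0$ (for the lower barrier) and the monotonicity of $\phi$ on the outer branch past $S_3(\phi(u^-))$ (for the upper barrier) are doing the real work, and it should be checked term by term. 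A secondary technical point is to phrase the "first time the barrier is violated" argument internally, so that the supremum $t_0$ is a legitimate element of $\ns\R$ and the hyperfinite maximum over $\dom$ is attained; both are routine given that $\dom$ is hyperfinite and $u \in \ns{C^1(I,\grid\Omega)}$.
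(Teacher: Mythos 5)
Your proof is correct and follows essentially the same route as the paper's: a first-exit-time barrier argument that checks the sign of $\lap\ns{\phi}(u)$ at a point where the solution touches $0$ (respectively the upper barrier $M=\max\{\norm{\ns{u}_0}_\infty,S_3(\phi(u^-))\}$), using $\phi(0)=0$, $\phi\geq 0$ on $[0,+\infty)$, and the Neumann discretization, exactly as in the paper (which is in fact terser than you are about the boundary terms and the tangency issue). One wording fix for the upper bound: the inequality $\ns{\phi}(u(x_1\pm\dx e_i))\leq\ns{\phi}(m)$ does not follow from $\phi$ being increasing \emph{near} $m$ together with the neighbours being $\leq m$ (a neighbour could sit at $u^-$, where $\phi$ has its local peak); it follows because $m\geq S_3(\phi(u^-))$ forces $\phi(m)\geq\phi(u^-)=\max_{[0,m]}\phi$, which is precisely why that constant appears in the barrier.
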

\begin{proof}
	If $u^+ = +\infty$, let
	$$\overline{t} = \sup\{ \tau \geq 0 : u(t,x) \geq 0 \text{ for all } t \leq \tau \text{ and for all } x \in \dom \}.$$
	The hypotheses over $\phi$ and the definition of $\overline{t}$ ensure that 
	if $u(\overline{t},x) = 0$, then $u_t(\overline{t},x) = \Delta_\Lambda \phi(u(\overline{t},x))\geq 0$.
	As a consequence, $\overline{t} = \sup{I}$. 
	
	Similarly, if $u^+ < +\infty$, let
	$$\overline{t} = \sup\{ \tau \geq 0 : u(t,x) \in [0,\max\{ \norm{\ns{u}_0}_\infty, S_3(\phi(u^-)) \} ] \text{ for all } t \leq \tau \text{ and for all } x \in \dom \}.$$
	In this case, if $u(\overline{t},x) = 0$, the equality $u_t(\overline{t},x) = \Delta_\Lambda \phi(u(\overline{t},x)) \geq 0$ holds as in the previous part of the proof.
	If $u(\overline{t},x) = \max\{ \norm{\ns{u}_0}_\infty, \phi(u^-)\}$, 
	then a similar calculation allows to conclude $u_t(\overline{t},x) \leq 0$.
	We deduce that it holds $\overline{t} = \sup{I}$ also for this case. 
\end{proof}

Since for any initial data $u_0 \in L^\infty(\Omega)$ the invariant set for the dynamical system \ref{pb discreto} is bounded, we deduce global existence in time.

\begin{corollary}[Global existence in time]\label{global ex}
	The solution $u$ of system \ref{pb discreto} satisfies $u \in \ns{C^1}(\ns{[0,\infty)}, \grid{\Omega})$.
\end{corollary}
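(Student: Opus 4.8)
The plan is to combine the a priori bounds from Proposition \ref{ex invariant set} with the transferred escape lemma for maximal solutions of ordinary differential equations. Recall that \ref{pb discreto} is an internal system of $H$ ordinary differential equations, where $H = |\dom| \in \ns{\N}$, whose right-hand side $u \mapsto \lap \ns{\phi}(u)$ is the composition of the linear discrete Laplacian with the map induced by $\ns{\phi} \in \ns{C^1}(\ns{\R})$; in particular this internal vector field is of class $\ns{C^1}$ on all of $\grid{\Omega} \cong \ns{\R}^{H}$, hence internally locally Lipschitz. Applying the transfer principle to the standard fact that a maximal solution of a locally Lipschitz ODE on $\R^n$ with finite right endpoint of existence must eventually leave every compact subset of the phase space, we obtain: if $\sup I < +\infty$, then $u(t)$ escapes every internally closed and bounded subset of $\grid{\Omega}$ as $t \uparrow \sup I$. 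It therefore suffices to produce a single internally closed and bounded set containing the forward orbit $\{ u(t) : t \in I,\ t \geq 0\}$.

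Such a set is furnished by Proposition \ref{ex invariant set} together with the mass conservation $\norm{u(t)}_1 = \norm{u_0}_1$ of Theorem \ref{esistenza}. If $u^+ < +\infty$, Proposition \ref{ex invariant set}(2) already confines every component $u(t,x)$ to the finite internal set $\{0, \max\{\norm{\ns{u}_0}_\infty, S_3(\phi(u^-))\}\}$, so the forward orbit lies in a hyperfinite product of bounded intervals, which is internally closed and bounded by transfer of the Heine--Borel theorem. If $u^+ = +\infty$, Proposition \ref{ex invariant set}(1) gives $u(t,x) \geq 0$ for all $t \in I$ and $x \in \dom$; since $\varepsilon^k \sum_{x \in \dom} u(t,x) = \norm{u(t)}_1 = \norm{u_0}_1$ and every summand is nonnegative, it follows that $0 \leq u(t,x) \leq \varepsilon^{-k} \norm{u_0}_1$ for every $x \in \dom$ and every $t \in I$ with $t \geq 0$; thus the forward orbit is contained in the internal box $[0, \varepsilon^{-k}\norm{u_0}_1]^{\dom}$, which is again internally closed and bounded.

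In either case the forward orbit of $u$ stays inside a fixed internally closed and bounded subset of $\grid{\Omega}$, so the escape alternative cannot occur and we must have $\sup I = +\infty$, i.e.\ $\ns{[0,\infty)} \subseteq I$; hence $u \in \ns{C^1}(\ns{[0,\infty)}, \grid{\Omega})$. The only delicate point is expected to be the case $u^+ = +\infty$, where boundedness of the orbit is not provided directly by an invariant set but must be recovered from the conserved $L^1$ norm together with nonnegativity; the remainder is a direct transfer of finite-dimensional ODE theory, of the same kind already invoked in the proof of Theorem \ref{esistenza}.
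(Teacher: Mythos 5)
Your proposal is correct and follows essentially the same route as the paper: the paper confines the forward orbit to the set $\simplex^+(\ns{u_0})$ of nonnegative grid functions with conserved $\ell^1$ mass, observes that this set is $\ns{}$compact in $\hR^{|\dom|}$, and invokes transferred ODE theory to rule out finite-time blow-up. Your case split merely makes explicit why that set is internally bounded (the componentwise bound $u(t,x)\leq \varepsilon^{-k}\norm{u_0}_1$ in the case $u^+=+\infty$), which the paper leaves implicit.
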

\begin{proof}
	Let $u$ be the solution of Problem \ref{pb discreto}, and let $I$ be the interval over which $u$ is defined.
	Define also
	$$\simplex^+(\ns{u_0})=\left\{f\in\grid{\Omega}:f(x) \geq0 \text{ for all } x\in\dom \text{ and } \norm{f}_1 = \norm{\ns{u}_0}_1\right\}.$$
	By Theorem \ref{esistenza} and by Proposition \ref{ex invariant set},
	$u(t) \in \simplex^+(\ns{u_0})$ for all $t \in I$.
	
	Let $\dom = \{x_1, \ldots, x_M\}$, with $M = |\dom|$.
	We identify $u$ with a vector-valued function that, by abuse of notation, we will still denote by $u : I \rightarrow \hR^M$, with the convention that the $k$-th component of $u(t)$ is $u(t,x_k)$.
	Since the set $\simplex^+(\ns{u_0})$
	is $\ns{}$compact in $\hR^M$, the theory of ODEs allows to conclude that $u$ has global existence in time.
\end{proof}

As a consequence of Theorem \ref{esistenza} and of Corollary \ref{global ex}, we deduce existence, uniqueness, and global existence in time for the grid solution to problem \ref{mark}.

\begin{proposition}
	Problem \ref{mark} always has a unique global grid solution.
\end{proposition}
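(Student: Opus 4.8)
The plan is to assemble the statement directly from the three results already established in this section: Theorem~\ref{esistenza}, Proposition~\ref{ex invariant set}, and Corollary~\ref{global ex}. The grid solution to problem~\ref{mark} is, by definition, the pair $([u],[\ns{\phi}(u)])$ where $u$ solves the hyperfinite ODE system~\ref{pb discreto} with initial data $\ns{u_0}$, so it suffices to transfer the existence, uniqueness, and global-in-time properties of $u$ to the equivalence classes $[u]$ and $[\ns{\phi}(u)]$.

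First I would invoke Theorem~\ref{esistenza} to obtain, on a maximal interval $I$, a unique $u\in\ns{C^1}(I,\grid{\Omega})$ solving~\ref{pb discreto}, together with the mass-conservation identity $\norm{u(t)}_1=\norm{\ns{u_0}}_1$. Next I would apply Proposition~\ref{ex invariant set}: in the case $u^+=+\infty$ it gives $u(t,x)\ge 0$ for all $t\in I$, and in the case $u^+<+\infty$ it gives the stronger confinement $u(t,x)\in\{0,\max\{\norm{\ns{u_0}}_\infty,S_3(\phi(u^-))\}\}$, hence in particular $\norm{u(t)}_\infty\in\fin$. In either case the invariant set $\simplex^+(\ns{u_0})$ is $\ns{}$compact in $\hR^M$, so Corollary~\ref{global ex} upgrades the maximal interval to $I=\ns{[0,\infty)}$, i.e.\ $u\in\ns{C^1}(\ns{[0,\infty)},\grid{\Omega})$. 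Passing to the quotient by $\equiv$, the curves $t\mapsto[u(t)]$ and $t\mapsto[\ns{\phi}(u(t))]$ are then defined for all $t\ge 0$, and uniqueness of the grid solution follows from the uniqueness of $u$ in Theorem~\ref{esistenza} together with the fact that the projection $\pi$ is well-defined.

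I do not expect a serious obstacle here, since the work has been front-loaded into the three cited results; the statement is essentially a packaging corollary. The only point requiring a word of care is the passage from uniqueness of $u$ as a grid function to uniqueness of the grid \emph{solution}: one must observe that the definition of grid solution fixes not merely the equivalence class $[u_0]$ of the initial datum but the grid function $\ns{u_0}$ itself, so there is no ambiguity to resolve at $t=0$, and the ODE system~\ref{pb discreto} then pins down $u(t)$ exactly for every $t$. With this remark in place the proof is a two-line citation of the preceding results.
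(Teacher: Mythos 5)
Your proposal matches the paper exactly: the proposition is stated there as an immediate consequence of Theorem~\ref{esistenza} and Corollary~\ref{global ex} (with Proposition~\ref{ex invariant set} supplying the compact invariant set), and no further argument is given. Your additional remark on why uniqueness of $u$ itself, rather than of $[u]$, settles uniqueness of the grid solution is a sensible clarification but does not change the route.
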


\section{Coherence of the grid solution with the measure-valued solutions to the ill-posed PDE}\label{coherence}

This section is devoted to the study of the coherence of the grid solution with the notions of measure-valued solutions for problem \ref{mark} discussed in section \ref{section illposed}.
In particular, we will show that, if $u$ is regular enough, then the grid solution of problem \ref{mark} coincides with an entropy Young measure solution in the case where $u^+ < +\infty$, and with the entropy Radon measure solution in the case where $u^+=+\infty$.

Our argument relies on an equality that will be used to establish an entropy condition for the pair $[u], [\ns{\phi(u)}]$.

\begin{lemma}\label{lemma divergenza}
	For all internal $f, g : \Lambda^k \rightarrow \ns{\R}$, it holds
	$$
	\div^-\big(g(f(x)) \cdot \grad f(x)\big)
	=
	g(f(x))  \lap(f(x))
	+ \grad^-f(x) \cdot \grad^- g(f(x)).
	$$
\end{lemma}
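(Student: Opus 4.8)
The plan is to prove the identity by a direct computation, reducing it to a one–dimensional discrete Leibniz rule applied in each coordinate. Unwinding the definitions $\div^- G = \sum_{i=1}^k \D_i^- G_i$ and $\grad f = \grad^+ f = (\D_1^+ f,\ldots,\D_k^+ f)$, the left–hand side is $\sum_{i=1}^k \D_i^-\big(g(f(x))\,\D_i^+ f(x)\big)$, so everything comes down to expanding a single summand and then summing over $i$.

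First I would record the backward–difference product rule: for internal $a,b : \Lambda^k \to \ns{\R}$, adding and subtracting $a(x)\,b(x-\varepsilon e_i)$ inside the numerator of $\D_i^-(ab)(x)$ yields
$$
\D_i^-(ab)(x) = a(x)\,\D_i^- b(x) + \big(\D_i^- a(x)\big)\,b(x-\varepsilon e_i).
$$
Then I would apply this with $a = g\circ f$ and $b = \D_i^+ f$, and simplify the two resulting terms using two elementary identities: first, $\D_i^+ f(x-\varepsilon e_i) = \varepsilon^{-1}\big(f(x)-f(x-\varepsilon e_i)\big) = \D_i^- f(x)$, which removes the backward shift from the second term; and second, $\D_i^-\D_i^+ f(x) = \D_i^+\D_i^- f(x) = \varepsilon^{-2}\big(f(x+\varepsilon e_i) - 2f(x) + f(x-\varepsilon e_i)\big)$, the symmetric second difference. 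Summing over $i$, the first terms collect into $g(f(x))\sum_{i=1}^k \D_i^-\D_i^+ f(x) = g(f(x))\,\lap f(x)$ and the second terms into $\sum_{i=1}^k \big(\D_i^- g(f(x))\big)\big(\D_i^- f(x)\big) = \grad^- f(x)\cdot\grad^- g(f(x))$, which is exactly the asserted right–hand side. Since $f$ and $g$ are defined on all of $\Lambda^k$, no boundary corrections intervene and the Neumann modification of $\lap$ plays no role in this purely algebraic identity.

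I do not expect a genuine obstacle; the computation is routine. The one point that needs care is the bookkeeping in the Leibniz rule: one must use the variant that puts the backward shift on the factor $\D_i^+ f$ rather than on $g\circ f$, since only then does the identity $\D_i^+ f(x-\varepsilon e_i) = \D_i^- f(x)$ turn that term into the backward gradient $\grad^- f$ appearing on the right, and only then do the mixed second differences $\D_i^-\D_i^+ f$ reassemble into $\lap f$; the other variant would leave a forward difference $\D_i^+ f$ and a mismatch.
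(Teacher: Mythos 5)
Your computation is correct, and it is exactly the routine argument the paper relies on: the paper in fact states Lemma \ref{lemma divergenza} without proof (remarking only that it is independent of the regularity of $f$ and $g$), and your backward-difference Leibniz rule with the shift placed on the factor $\D_i^+ f$, together with the identities $\D_i^+ f(x-\varepsilon e_i)=\D_i^- f(x)$ and $\D_i^-\D_i^+ = \D_i^+\D_i^-$, supplies precisely the missing verification. Your closing remark about which factor carries the backward shift is the right piece of bookkeeping, and the observation that the Neumann modification of $\lap$ is irrelevant on all of $\Lambda^k$ is also correct.
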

Notice that the above result is independent of the regularity of $f$ and $g$.

\begin{lemma}[Entropy condition]\label{lemma entropy condition}
	For any $g \in C^1(\R)$ with $g' \geq 0$, define
	$
	G(u(t,x)) = \int_0^{u(t,x)} g(\phi(s)) ds.
	$
	Then, if $u$ is the solution to problem \ref{pb discreto}, it holds
	$$
	\ns{G}(u)_t = \div^-((\ns{g}(\phi(u)) \grad^+(\phi(u))) - \grad^- \ns{g}'(\phi(u)) \cdot \grad^- \phi(u).
	$$
	and, if $\grad^- \phi(u)$ is finite,
	\begin{equation}\label{entropy condition}
	\ns{G}(u)_t \sim \div^-((\ns{g}(\phi(u)) \grad^+(\phi(u))) - \ns{g}'(\phi(u))  |\grad^- \phi(u)|^2.
	\end{equation}
\end{lemma}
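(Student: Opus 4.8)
The plan is to split the statement into its two parts. The exact identity is a bookkeeping consequence of the chain rule in $t$ together with Lemma~\ref{lemma divergenza}, and uses no regularity of $u$, consistently with the remark after that lemma; the infinitesimal relation~\ref{entropy condition} is then extracted from the exact identity by means of the discrete chain rule for $\grad^-$, which becomes usable once $\ns{\phi}(u)$ and $\grad^-\ns{\phi}(u)$ are known to be finite.

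For the exact identity, since $g,\phi\in C^1(\R)$ the primitive $G$ lies in $C^1(\R)$ with $G'=g\circ\phi$, and $u\in\ns{C^1}(\ns{[0,\infty)},\grid{\Omega})$ by Corollary~\ref{global ex}; transferring the chain rule yields, for each fixed $\x\in\dom$,
$$
\ns{G}(u(t,\x))_t=\ns{G}'(u(t,\x))\,u_t(t,\x)=\ns{g}(\ns{\phi}(u(t,\x)))\,u_t(t,\x).
$$
Substituting the equation $u_t=\lap\ns{\phi}(u)$ of problem~\ref{pb discreto} gives $\ns{G}(u)_t=\ns{g}(\ns{\phi}(u))\,\lap\ns{\phi}(u)$. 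Now apply Lemma~\ref{lemma divergenza} at a fixed time $t$, with $f:=\ns{\phi}(u(t,\cdot))$ and with $\ns{g}$ in the role of $g$:
$$
\div^-\!\big(\ns{g}(\ns{\phi}(u))\,\grad^+\ns{\phi}(u)\big)=\ns{g}(\ns{\phi}(u))\,\lap\ns{\phi}(u)+\grad^-\ns{\phi}(u)\cdot\grad^-\!\big(\ns{g}(\ns{\phi}(u))\big).
$$
Solving for $\ns{g}(\ns{\phi}(u))\,\lap\ns{\phi}(u)$ and inserting it into the previous display gives the first assertion.

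For the infinitesimal relation, assume $\grad^-\ns{\phi}(u)$ is finite. First I would observe that $\ns{\phi}(u(t,\x))\in\fin$ at every grid point: if $u^+=+\infty$ then $u(t,\x)\geq 0$ by Proposition~\ref{ex invariant set}, while the hypotheses~\ref{ipotesi di partenza} give $0\leq\phi\leq\phi(u^-)$ on $[0,+\infty)$, so by transfer $0\leq\ns{\phi}(u(t,\x))\leq\phi(u^-)\in\R$; if $u^+<+\infty$ then $\norm{u}_\infty\in\fin$ by Proposition~\ref{ex invariant set}, so $\ns{\phi}(u(t,\x))\in\fin$ by $S$-continuity of $\ns{\phi}$. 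Then, for each $i$, writing $a=\ns{\phi}(u(\x))$ and $b=\ns{\phi}(u(\x-\dx e_i))$,
$$
\D_i^-\!\big(\ns{g}(\ns{\phi}(u))\big)(\x)=\frac{\ns{g}(a)-\ns{g}(b)}{a-b}\,\D_i^-\ns{\phi}(u)(\x)
$$
when $a\neq b$, both sides being zero otherwise. Since $\grad^-\ns{\phi}(u)$ is finite we have $a-b=\dx\,\D_i^-\ns{\phi}(u)(\x)\approx 0$, and since $a,b\in\fin$ the nonstandard mean value theorem together with $S$-continuity of $\ns{g}'$ at finite points gives $\tfrac{\ns{g}(a)-\ns{g}(b)}{a-b}\approx\ns{g}'(a)$. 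Multiplying by the finite quantity $\D_i^-\ns{\phi}(u)(\x)$ and summing over $i=1,\ldots,k$ (using that $a=\ns{\phi}(u(\x))$ does not depend on $i$),
$$
\grad^-\ns{\phi}(u)\cdot\grad^-\!\big(\ns{g}(\ns{\phi}(u))\big)\approx\ns{g}'(\ns{\phi}(u))\,|\grad^-\ns{\phi}(u)|^2,
$$
and substituting this into the exact identity proves~\ref{entropy condition}.

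The exact identity is essentially bookkeeping, and its only subtle ingredient — the form of $\lap$ near the Neumann boundary $\partial_\Lambda\dom$ — is already built into Lemma~\ref{lemma divergenza}, which is valid for arbitrary internal $f,g$. The point that genuinely needs care is the approximation, and precisely its two finiteness inputs: the finiteness of $\ns{\phi}(u)$, where the hypotheses~\ref{ipotesi di partenza} and the invariant set of Proposition~\ref{ex invariant set} enter, and the standing finiteness of $\grad^-\ns{\phi}(u)$. With both available, the $S$-continuity of $\ns{g}$ and $\ns{g}'$ closes the argument.
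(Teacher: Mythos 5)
Your proof is correct and follows essentially the same route as the paper: chain rule in $t$ together with the equation $u_t=\lap\ns{\phi}(u)$ and Lemma \ref{lemma divergenza} for the exact identity, then the discrete chain rule for $\grad^-$ (mean value theorem plus $S$-continuity of $\ns{g}'$ at the finite values of $\ns{\phi}(u)$) for the infinitesimal relation \ref{entropy condition}. You merely spell out in more detail than the paper the finiteness of $\ns{\phi}(u)$ coming from Proposition \ref{ex invariant set} and Hypotheses \ref{ipotesi di partenza}, and you correctly read the term $\grad^-\ns{g}'(\phi(u))$ in the statement as $\grad^-\bigl(\ns{g}(\ns{\phi}(u))\bigr)$, consistently with the paper's own proof.
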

\begin{proof}
	For $G$, $g$ and $u$ it holds
	$$
	G(u)_t  =  g(\phi(u))u_t 
	=  g(\phi(u)) \lap \phi(u).
	$$
	By Lemma \ref{lemma divergenza},
	$$
	\div^-((g(\phi(u)) \grad^+(\phi(u)))
	=
	g(\phi(u)) \lap \phi(u)
	+  \grad^- \ns{g}(\phi(u)) \cdot \grad^- \phi(u),
	$$
	so that the first equality is proved.
	
	For the second equality, notice that if $\grad^- \phi(u)$ is finite then, by Lemma \ref{lemma derivata H^1} and by the hypothesis $g\in C^1(\R)$, $\grad^- g(\phi(u)) \sim g'(\phi(u)) \grad^-\phi(u)$.
	As a consequence,
	$$
	\grad^- g(\phi(x)) \cdot \grad^-\phi(x)
	\sim
	g'(\phi(x)) |\grad^- \phi(x)|^2,
	$$
	as desired.
\end{proof}

Formula \ref{entropy condition} can be regarded as an entropy condition for system \ref{pb discreto}.
In particular, this equality allows us to prove that the solution obtained by the nonstandard model \ref{pb discreto} retains the physical meaning of an entropy solution.

Now, we will prove that the grid solution of problem \ref{mark} is always a very weak solution in the sense of distributions.

\begin{lemma}\label{lemma equivalenza 1}
	Let $u$ be the solution of problem \ref{pb discreto}.
	Then $[u]\in \tests'(\R\times\Omega)$, $[\ns{\phi}(u)] \in L^\infty(\R\times\Omega)$, and the couple $([u],[\ns{\phi}(u)])$ is a very weak solution of problem \ref{mark} in the sense of distributions, in the sense that $[u]$ and $[\ns{\phi}(u)]$ satisfy
	\begin{equation}\label{less regular}
	\int_0^T \langle [u], \varphi_t \rangle + \langle [\ns{\phi}(u)], \Delta \varphi \rangle dt + \int_{\Omega} u_0(x) \varphi(0,x) dx = 0
	\end{equation}
	for all $\varphi \in C^1([0,T], \tests'(\Omega))$ with $\varphi(T,x) = 0$ for all $x \in \Omega$.
\end{lemma}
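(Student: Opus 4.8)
The plan is to test the grid evolution equation \ref{pb discreto} against $\varphi$, to transfer the grid Laplacian onto $\varphi$ by discrete summation by parts, and then to pass to shadows; the two membership claims will fall out of the conservation and positivity results already in hand. First I would record the regularity. By Corollary \ref{global ex} the solution $u$ is defined on all of $\ns{[0,\infty)}$; Proposition \ref{ex invariant set} gives $u(t,x)\ge 0$ and Theorem \ref{esistenza} gives $\norm{u(t)}_1=\norm{u_0}_1\in\fin$ for every $t$. Since $u\ge 0$, Hypothesis \ref{ipotesi di partenza} forces $\phi$ to be bounded on $[0,\infty)$ — if $u^+<+\infty$ because $u(t,\cdot)$ stays in the bounded invariant set of Proposition \ref{ex invariant set}, and if $u^+=+\infty$ because $\phi\in C^1$, $\phi\ge 0$, $\phi(0)=0$ and $\lim_{x\to+\infty}\phi(x)=0$ — so $\norm{\ns\phi(u(t))}_\infty\le\ns{C_\phi}\in\fin$ uniformly in $t$ for a standard constant $C_\phi$, whence also $\norm{\ns\phi(u(t))}_1\le\ns{C_\phi}\,\varepsilon^k|\dom|\in\fin$. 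The uniform $L^\infty$ bound together with the grid-function results of \cite{ema2} (cf.\ Corollary \ref{corollario baricentro}) gives $[\ns\phi(u)]\in L^\infty(\R\times\Omega)$ with norm $\le C_\phi$, and the uniform $L^1$ bound makes $\psi\mapsto\sh{\big(\ns{\int_0^T}\langle u(t),\ns\psi(t,\cdot)\rangle\,dt\big)}$ a well-defined element of $\tests'(\R\times\Omega)$.

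For the identity, fix $\varphi$ as in the statement (a $C^1$-in-time family of test functions on $\Omega$). Then there is a fixed compact $K\subset\subset\Omega$ with $\supp\varphi(t,\cdot)\subseteq K$ for all $t$, so for each $t$ the restriction of $\ns\varphi(t,\cdot)$ to $\dom$ lies in $\test(\Omega)$ and vanishes near $\partial_\Lambda\dom$. Pairing the grid ODE $u_t(t,\cdot)=\lap\ns\phi(u(t,\cdot))$ with $\ns\varphi(t,\cdot)$ in the grid inner product and integrating over $[0,T]$, an internal integration by parts in time (legitimate by transfer, since $u$ is $\ns{C^1}$ in $t$) together with $\ns\varphi(T,\cdot)=0$ and $u(0,\cdot)=\ns{u}_0$ rewrites the left side as $-\ns{\int_0^T}\langle u,\ns{\varphi_t}(t,\cdot)\rangle\,dt-\langle\ns{u}_0,\ns\varphi(0,\cdot)\rangle$. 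On the right side, the discrete summation-by-parts formula \ref{support}, applied once more — the compact support of $\ns\varphi(t,\cdot)$ killing every boundary contribution — rewrites $\langle\lap\ns\phi(u(t)),\ns\varphi(t)\rangle$ as $\langle\ns\phi(u(t)),\chi\rangle$, where $\chi\in\test(\Omega)$ (depending on $t$) is a grid Laplacian of $\ns\varphi(t,\cdot)$ up to the harmless index shift of \ref{support}, and hence satisfies $\sh{\chi}=\Delta\varphi(t,\cdot)=\sh{\big(\ns{(\Delta\varphi)}(t,\cdot)\big)}$. Two grid test functions with the same shadow differ by an infinitesimal in the $\norm{\cdot}_\infty$ norm, so pairing $\chi-\ns{(\Delta\varphi)}(t,\cdot)$ with the $L^1$-bounded $\ns\phi(u(t))$ is infinitesimal, and therefore, by Theorem \ref{mainthm}, $\sh{\langle\lap\ns\phi(u(t)),\ns\varphi(t)\rangle}=\ldual[\ns\phi(u(t))],\Delta\varphi(t,\cdot)\rdual$.

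Finally I would pass to shadows in the integrated identity. The internal integrands $t\mapsto\langle u(t),\ns{\varphi_t}(t,\cdot)\rangle$ and $t\mapsto\langle\ns\phi(u(t)),\chi\rangle$ are bounded and, since $u$ and $\varphi$ are $C^1$ in time, $S$-continuous in $t$; hence they are $S$-integrable with continuous shadow, so the shadow of each internal time integral is the Lebesgue integral over $[0,T]$ of that shadow. By Theorem \ref{mainthm} those shadows are $\langle[u],\varphi_t\rangle$ and $\langle[\ns\phi(u)],\Delta\varphi\rangle$ respectively, and since $\ns{u}_0$ restricted to $\dom$ represents $u_0\in L^\infty(\Omega)$ one has $\sh{\langle\ns{u}_0,\ns\varphi(0,\cdot)\rangle}=\int_\Omega u_0(x)\varphi(0,x)\,dx$. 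Assembling the pieces produces exactly equation \ref{less regular}.

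I expect the main obstacle to be this last step — commuting the shadow with the internal integral in time — which rests on the $\ns{C^1}$-regularity in $t$ and on the uniform $L^1$ and $L^\infty$ bounds obtained at the start; the summation-by-parts identity incorporating the Neumann conditions, and the fact that a grid Laplacian of a grid test function has the expected shadow, are the other points needing care, while the identification of the initial-datum term is routine from the compatibility of the grid integral with the Lebesgue integral.
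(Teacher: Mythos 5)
Your proposal is correct and follows essentially the same route as the paper's proof: pair the grid ODE with the restricted test function, integrate by parts in time using $\varphi(T,\cdot)=0$, move the grid Laplacian onto $\varphi$ by discrete summation by parts, and pass to shadows term by term, using the uniform $L^1$ bound on $u$ and $L^\infty$ bound on $\ns{\phi}(u)$ exactly as the paper does. The extra care you take with $S$-integrability in time and with replacing the shifted grid Laplacian of $\ns{\varphi}$ by $\ns{(\Delta\varphi)}$ is detail the paper leaves implicit, not a different argument.
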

\begin{proof}
	By Proposition \ref{ex invariant set}, $\norm{u(t)}_1 \in \fin$ for all $t\in\hR_{\geq 0}$ and, by Proposition \ref{ex invariant set} if $u^+ < + \infty$ or by the boundedness of $\phi$ if $u^+ = +\infty$, also $\norm{\ns{\phi}(u)}_\infty \in \fin$ for all $t \in \hR_{\geq0}$.
	
	Now let $\varphi \in C^1([0,T], \tests'(\Omega))$ with $\varphi(T,x) = 0$ for all $x \in \Omega$, and define $\varphi_\Lambda(t) = \ns{\varphi(t)}_{|\Lambda}$.
	Since $u\in \ns{C}^{1}(\hR_{\geq0}, \grid{\Omega_\Lambda})$ and $\varphi_\Lambda \in \ns{C}^{1}([0,T], \test(\Omega_\Lambda))$, we have
	\begin{equation}\label{intermedio equivalenza 1}
	\int_0^T \langle u_t(t), \varphi_\Lambda(t) \rangle dt = - \int_0^T \langle u(t), (\varphi_\Lambda)_t(t) \rangle dt - \langle \ns{u}_0, \varphi_\Lambda(0,x) \rangle.
	\end{equation}
	By the discrete summation by parts formula, for all $t \in \hR_{\geq 0}$ we have
	\begin{equation}\label{intermedio equivalenza 2}
	\langle \Delta_{\Lambda}\phi(u(t)), \varphi_\Lambda(t) \rangle = \langle  \phi(u(t)), \Delta_\Lambda \varphi_\Lambda(t) \rangle.
	\end{equation}
	Taking into account that $u$ satisfies \ref{pb discreto}, by equations \ref{intermedio equivalenza 1} and \ref{intermedio equivalenza 2}, we obtain
	$$
	\int_0^T \langle u(t), (\varphi_\Lambda)_t(t) \rangle + \langle \phi(u(t)), \Delta_\Lambda \varphi_\Lambda(t) \rangle dt + \langle \ns{u}_0, \varphi_\Lambda(0,x) \rangle = 0.
	$$
	By Lemma \ref{questo corollario}, by $u_0\in L^\infty(\Omega)$ and by boundedness of $\Omega$,
	$$
	\sh{\langle \ns{u}_0, \varphi_\Lambda(0,x) \rangle} = \int_{\Omega} u_0 \varphi(0,x) dx.
	$$
	As a consequence, $[u]$ and $[\ns{\phi}(u)]$ satisfy
	$$
	\sh{\left(}\int_0^T \langle u(t), \ns{\varphi}(t) \rangle dt\right)
	=
	\int_{[0,T]} \ldual [u], \varphi_t \rdual dt
	+
	\int_{\Omega} u_0(x) \varphi(0,x) dx
	$$
	and
	$$
	\sh{\left(}\int_0^T \langle \Delta_{\Lambda}\phi(u(t)), \ns{\varphi}(t) \rangle dt\right) = \int_0^T \ldual [\ns{\phi}(u)], \Delta \varphi \rdual dt.
	$$
	By taking the sum of the previous equations, we deduce that equality \ref{less regular} holds.
\end{proof}

\subsection{The case $u^+ < +\infty$}

We will now discuss coherence of the grid solution with the solutions of problem \ref{mark} in the case where $u^+ < \infty$.
As expected, if $u$ is regular enough, then the grid solution to problem \ref{mark} is a solution of problem \ref{mark} in a classical sense.
The degree of regularity of the standard solution depends upon the regularity of $u$.

\begin{theorem}\label{mina}
	Let $[u], [\ns{\phi(u)}]$ be the
	grid solution of Problem \ref{pb discreto}, and let $\nu(t,x)$ the Young measure associated to $u$ according to Theorem \ref{parametrized measures}.
	\begin{enumerate}
		\item\label{ym} If
		$[\ns{\phi}(u)] \in L^2([0,T],H^1(\Omega))$,
		then $[u], [\ns{\phi(u)}]$ is an entropy Young measure solution of Problem \ref{mark} in the sense of equation \ref{weak solution illposed}.
		\item\label{vweak} If $\nu(t,x)$ is Dirac a.e.,
		then $[u]\in L^\infty([0,T], L^\infty(\Omega))$, $[\ns{\phi}(u)] = \phi([u])$, and $([u], [\ns{\phi}(u)])$ is a very weak solution of Problem \ref{mark}.
		\item\label{weak} Under the hypotheses
		\begin{itemize}
			\item $\nu(t,x)$ is Dirac a.e.,
			\item $[\ns{\phi}(u)] \in L^\infty([0,T] \times \Omega) \cap L^2([0,T],H^1(\Omega))$,
		\end{itemize}
		then $[u]$ is a weak solution of Problem \ref{mark}.
		\item\label{classic} If
		$u\in S^{1}(\ns{[0,+\infty)},S^{2}(\Omega))$,
		then $[u] = \sh{u}$ is a classical global solution of Problem \ref{mark}. 
	\end{enumerate}
\end{theorem}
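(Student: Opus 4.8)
The plan is to treat the four items after recording a few facts common to all of them. By Proposition~\ref{ex invariant set}(2) the grid solution obeys a homogeneous bound $\norm{u(t)}_\infty\in\fin$, so Theorem~\ref{parametrized measures} and Corollary~\ref{corollario baricentro} apply: the associated $\nu_{t,x}$ is a genuine probability measure, $[u]$ coincides with the barycentre $\int_\R\tau\,d\nu_{t,x}\in L^\infty([0,T]\times\Omega)$ (the regular part $u_r$ of Corollary~\ref{corollario baricentro}), and $v:=[\ns{\phi}(u)]\in L^\infty([0,T]\times\Omega)$ with $v(t,x)=\int_\R\phi\,d\nu_{t,x}$; moreover $v\geq 0$ since $u\geq 0$ and $\phi\geq 0$ on $[0,+\infty)$ by Hypothesis~\ref{ipotesi di partenza}. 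Lemma~\ref{lemma equivalenza 1} already supplies the very weak identity \ref{less regular} for $([u],v)$. Item~\ref{vweak} then falls out at once: if $\nu_{t,x}=\delta_{a(t,x)}$ a.e., then $u_r=a=[u]\in L^\infty$ and $v=\phi(a)=\phi([u])$, so \ref{less regular} is exactly the very weak formulation of \ref{mark}. For item~\ref{weak} the extra hypothesis gives $v=\phi([u])\in L^2([0,T],H^1(\Omega))$, and I would rerun the computation of Lemma~\ref{lemma equivalenza 1} with test functions $\ns{\psi}$ for $\psi\in C^1([0,T]\times\overline{\Omega})$ with $\psi(T,\cdot)=0$, using the summation-by-parts identity \ref{support} (which encodes the discrete Neumann conditions \ref{neumann}) together with $[\grad^-\ns{\phi}(u)]=\nabla v$ to turn $\langle[\ns{\phi}(u)],\Delta\psi\rangle$ into $-\int_\Omega\nabla\phi([u])\cdot\nabla\psi$; this yields precisely \ref{weak solution illposed}, so $[u]$ is a weak solution.

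The substance is item~\ref{ym}, where the whole structure in the definition of an entropy Young measure solution must be produced. My first move would be to extract a global $H^1$ bound on $\ns{\phi}(u)$ from the entropy calculus: for $g(s)=s$, so $G(u)=\int_0^u\phi(s)\,ds\geq 0$, the (exact) first identity of Lemma~\ref{lemma entropy condition} reads $\ns{G}(u)_t=\div^-\!\big(\phi(u)\grad^+\phi(u)\big)-|\grad^-\phi(u)|^2$; integrating over $\dom\times[0,T]$ and killing the divergence by the Neumann conditions \ref{neumann} gives
$$
\int_0^T\norm{\grad^-\ns{\phi}(u)(t)}_2^2\,dt=\norm{\ns{G}(\ns{u}_0)}_1-\norm{\ns{G}(u(T))}_1,
$$
which is finite because $G$ is bounded on the finite range of $u$ and $\dom$ has finite measure. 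Thus $\ns{\phi}(u)$ has finite grid $L^2H^1$ norm; combined with the hypothesis $v\in L^2([0,T],H^1(\Omega))$ and a Rellich-type compactness argument (in the spirit of Lemma~3.7 of \cite{ema2}), $\ns{\phi}(u)$ is nearstandard in $L^2([0,T],H^1(\Omega))$, whence its parametrized measure, the push-forward $\phi_*\nu_{t,x}$, equals $\delta_{v(t,x)}$ a.e.\ and so $\nu_{t,x}$ is carried by $\phi^{-1}(v(t,x))\subseteq\{S_1(v(t,x)),S_2(v(t,x)),S_3(v(t,x))\}$. I expect this concentration step — upgrading a grid-level energy bound to rigidity of the Young measure — to be the main obstacle. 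Granting it, put $\lambda_j(t,x)=\nu_{t,x}(\{S_j(v(t,x))\})$ on the relevant branch domains: then $\lambda_j\geq 0$, $\sum_j\lambda_j=\nu_{t,x}(\R)=1$, only the branch whose domain contains $v$ can carry mass (giving the constraints on $\lambda_1,\lambda_3$), and $u_r=\int_\R\tau\,d\nu=\sum_j\lambda_j S_j(v)$, which are conditions (1)--(2); condition (3) is \ref{weak solution illposed}, obtained as for item~\ref{weak} with $v$ in the role of $\phi([u])$.

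It remains to verify condition~(4), the entropy inequality, and then item~\ref{classic}. For the inequality I would use Lemma~\ref{lemma entropy condition} once more: for $g\in C^1(\R)$ with $g'\geq 0$, the a.e.\ finiteness of $\grad^-\ns{\phi}(u)$ activates \ref{entropy condition}; pairing with $\ns{\varphi}$ for nonnegative $\varphi\in\tests([0,T]\times\Omega)$, integrating over $\dom\times[0,T]$ and taking standard parts produces \ref{entropy inequality}, using $[\grad^-\ns{\phi}(u)]=\nabla v$ (Theorem~\ref{mainthm}), $[\ns{G}(u)]=\sum_j\lambda_j G(S_j(v))=G^\star([u])$ (since $\nu$ sits on the branch points), and the S-continuity of $\ns{g}$ and $\ns{g}'$ to pass the products to the limit. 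Finally, for item~\ref{classic}: if $u\in S^{1}(\ns{[0,+\infty)},S^{2}(\Omega))$ then $\sh{u}\in C^1([0,+\infty),C^2(\Omega))$, and the compatibility of grid derivatives with classical derivatives recorded after Theorem~\ref{mainthm} lets me take standard parts in \ref{pb discreto} to obtain $\partial_t\sh{u}=\Delta\phi(\sh{u})$ in $\Omega$, while the discrete Neumann conditions \ref{neumann} pass to $\frac{\partial\phi(\sh{u})}{\partial\normal}=0$ on $\partial\Omega$ and $u(0,\cdot)=\ns{u}_0$ gives $\sh{u}(0,\cdot)=u_0$; global existence in time is Corollary~\ref{global ex}, so $\sh{u}=[u]$ is a classical global solution of \ref{mark}.
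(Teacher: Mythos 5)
Your proposal is correct and, for items (\ref{vweak}), (\ref{weak}) and (\ref{classic}), coincides with the paper's argument: the paper likewise gets (\ref{vweak}) by substituting $[u]$ and $\phi([u])$ into \ref{less regular} via Theorem 3.13 of \cite{ema2}, gets (\ref{weak}) from Proposition \ref{coerenza formulazione discreta} (which is exactly your ``rerun the summation by parts with $C^1$ test functions'' step), and gets (\ref{classic}) from Theorems 1.14--1.16 of \cite{ema2} plus the boundary conditions \ref{neumann}. For item (\ref{ym}) your route differs in one genuine respect. The paper passes directly from the hypothesis $[\ns{\phi}(u)]\in L^2([0,T],H^1(\Omega))$ to the assertion that $\phi_*\nu(t,x)$ is concentrated at a single value, and hence that $\nu(t,x)$ is a superposition of at most three Diracs at the $S_i$; it does not derive any grid-level gradient bound. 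You instead first extract the exact energy identity
$$
\int_0^T\norm{\grad^-\ns{\phi}(u)(t)}_2^2\,dt=\norm{\ns{G}(\ns{u}_0)}_1-\norm{\ns{G}(u(T))}_1\in\fin
$$
from Lemma \ref{lemma divergenza} with $g=\mathrm{id}$ and the Neumann conditions, and then invoke compactness to upgrade this to nearstandardness of $\ns{\phi}(u)$ in $L^2([0,T],H^1(\Omega))$ and hence to the concentration of $\nu$. This buys something real: the a priori bound is intrinsic to the grid system (it does not presuppose the regularity hypothesis), it justifies the finiteness of $\grad^-\ns{\phi}(u)$ needed to activate \ref{entropy condition}, and it makes explicit the compactness mechanism that the paper leaves implicit. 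You correctly flag this concentration step as the main obstacle; the paper simply asserts it, so on this point your write-up is if anything more careful.

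One caveat on the entropy inequality. When you ``take standard parts'' of $\langle\ns{g}'(\ns{\phi}(u))\,|\grad^-\ns{\phi}(u)|^2,\ns{\varphi}\rangle$, S-continuity of $\ns{g}'$ is not enough to pass the quadratic term to the limit as an equality: possible concentration of the gradient means you only get
$$
\sh{\langle\ns{g}'(\ns{\phi}(u))\,|\grad^-\ns{\phi}(u)|^2,\ns{\varphi}\rangle}\;\geq\;\int_0^T\!\!\int_\Omega g'(v)\,|\nabla v|^2\varphi\,dx\,dt,
$$
which is precisely why \ref{entropy inequality} is an inequality rather than an equality. The paper isolates this as Proposition 3.3 of \cite{ema2} (a weak lower semicontinuity statement for the standard part of quadratic expressions); you should cite or reprove that step rather than subsume it under ``passing the products to the limit.'' With that adjustment your argument is complete at the same level of rigour as the paper's.
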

\begin{proof}
	(\ref{ym}).
	Since $[\ns{\phi}(u(t))] \in H^1(\Omega)$ for a.e.\ $t \geq 0$, we deduce that $\int_{\Omega}\phi(\tau)d\nu(t,x)$ is single-valued for a.e.\ $t \geq 0$.
	In particular, $\nu(t,x)$ is a.e.\ a superposition of at most three Dirac measures centred at $S_i\left(\int_{\Omega}\phi(\tau)d\nu(t,x)\right)$, and $[u]$ is the barycentre of $\nu$ in the sense that
	$$
	[u](t,x) = \int_\R \tau d\nu(t,x).
	$$
	From these properties, we recover conditions (1)--(3) of the definition of entropy Young measure solution.
	
	By taking into account that $[\ns{\phi(u)}] \in H^1(\Omega)$, from Proposition \ref{coerenza formulazione discreta} and from equation \ref{less regular} we deduce that $[u]$ and $[\ns{\phi(u)}]$ satisfy
	$$
	\int_0^T \int_{\Omega} [u] \varphi_t - \nabla [\ns{\phi(u)}] \cdot \nabla \varphi dx dt + \int_{\Omega} u_0(x) \varphi(0,x) dx = 0
	$$
	for all $\varphi \in C^1([0,T] \times \overline{\Omega})$ with $\varphi(T,x) = 0$ for all $x \in \Omega$.
	
	We will now derive the entropy condition \ref{entropy inequality} for $[u]$ and $[\ns{\phi}(u)]$.
	Let $g \in C^1(\R)$, $g' \geq 0$, $G(x) = \int_0^{x}g(\phi(\tau))d\tau$, and let $\varphi \in \tests([0,T]\times\Omega)$ with $\varphi \geq 0$.
	Define also
	$$
	G^\star([u]) = \sum_{i=1}^3 \int_0^{S_i\left([\ns{\phi(u)}]\right)} g(\tau)d\tau.
	$$
	By Theorem \ref{parametrized measures}, we have the following equalities
	\begin{eqnarray*}
		- \int_0^T \int_\Omega G^\star([u]) \varphi_t dx dt
		& = & -\sh{\int_0^T \langle \ns{G}(u), \ns{\varphi}_t \rangle dt} \\
		& = & \sh{\int_0^T \langle \ns{G}(u)_t, \ns{\varphi}\rangle dt} \\
		& = & \sh{\int_0^T  \langle\ns{g}(\ns{\phi}(u))u_t, \ns{\varphi}\rangle  dt} \\
		& = & \sh{\int_0^T \langle\ns{g}(\ns{\phi}(u))\Delta_\Lambda \varphi(u), \ns{\varphi}\rangle  dt}.
	\end{eqnarray*}
	By Lemma \ref{lemma entropy condition} and by $[\ns{\phi}(u)] \in H^1(\Omega)$, we deduce
	$$
	\int_0^T \langle\ns{g}(\ns{\phi}(u))\Delta_\Lambda v, \ns{\varphi}\rangle dt
	\sim
	\int_0^T \langle\div^-(\ns{g}(\ns{\phi}(u) \grad^+\phi(u)) - \ns{g}'(\ns{\phi}(u))  |\grad^- \phi(u)|^2, \ns{\varphi} \rangle dt
	$$
	By the discrete summation by parts formula and by Theorem \ref{mainthm},
	\begin{eqnarray*}
		-{\int_0^T\langle\div^-(\ns{g}(\ns{\phi}(u) \grad^+\phi(u)),\ns{\varphi}\rangle}dt
		&\sim&
		{\int_0^T\langle\ns{g}(\ns{\phi}(u)) \grad^+\ns{\phi}(u),\grad^- \ns{\varphi}\rangle}dt\\
		&\sim&
		\int_0^T\int_\Omega g([\ns{\phi}(u)]) \nabla[\ns{\phi}(u)] \cdot \nabla\varphi dx dt
	\end{eqnarray*}
	and, by Proposition 4.3 of \cite{ema2},
	$$
	\sh{\int_0^T \langle\ns{g}'(\ns{\phi}(u))  |\grad^- \ns{\phi}(u)|^2, \ns{\varphi}\rangle dt}
	\geq
	\int_0^T \int_\Omega g'([\ns{\phi}(u)])  |\nabla [\ns{\phi}(u)]|^2 \varphi dx dt.
	$$
	Putting together the above inequalities, we deduce
	$$
	\int_0^T \int_\Omega G^\star([u]) \varphi_t - g([\ns{\phi}(u)]) \nabla([\ns{\phi}(u)]) \cdot \nabla\varphi - g'([\ns{\phi}(u)])  |\nabla [\ns{\phi}(u)]|^2 \varphi dx dt \geq 0,
	$$
	so that $[u]$ and $[\ns{\phi}(u)]$ satisfy the entropy condition \ref{entropy inequality}.
	
	(\ref{vweak}).
	Since $\nu(t,x)$ is Dirac, by Theorem 3.13 of \cite{ema2}, it coincides with $[u]$ and, as a consequence, we also have $[\ns{\phi}(u(t))] = \phi([u])$.
	By substituting $[u]$ and $\phi([u])$ in equation \ref{less regular}, we obtain
	$$
	\int_{0}^T \int_{\Omega} [u](t,x) \varphi_t + \phi([u])(t,x) \Delta \varphi d(t,x) + \int_{\Omega} u_0(x) \varphi(0,x) dx = 0,
	$$
	that is, $([u], [\ns{\phi}(u(t))])$ is a very weak solution of Problem \ref{mark}.
	
	(\ref{weak}).
	In addition to the conclusions of point (\ref{vweak}), we also have $\phi([u]) \in L^\infty((0,T),H^1(\Omega))$, so that by Proposition \ref{coerenza formulazione discreta} the following equality
	$$
	\langle \lap \phi(u(t))(x), \ns{\varphi}(t) \rangle = - \int_{\Omega} \nabla \phi([u])(t) \cdot \nabla \varphi(t) dx
	$$
	holds for a.e.\ $t \geq 0$.
	Hence, by substituting $[u]$ and $\phi([u])$ in equation \ref{less regular}, we deduce
	that 
	$[u]$ is a weak solution of Problem \ref{mark}.
	
	We will now prove (\ref{classic}).
	By Theorem 
	\ref{mainthm}, $[u]=\sh{u}$ and $\sh{u} \in C^1(\R_{\geq0}, C^{2}(\Omega))$.
	Moreover, $\sh{\ns{\phi}(u)} = \phi(\sh{u})\in C^1([0,+\infty), C^2(\Omega))$.
	By Theorem 2.15 of \cite{ema2}, $[\Delta_\Lambda \ns{\phi}(u(t))] = \Delta \phi(\sh{u(t)})$ for all $t \geq 0$.
	
	Let $x \in \partial \Omega$ and let $\normal_x$ be the unit exterior normal vector at $x$, and define $A(x)=\{ i : \text{ there exists } y\in\partial_{\Lambda} \dom, y \sim x \text{ such that } i \in I_y^+\cup I_y^-  \}$. We claim that $\normal_x = \sum_{i \in A(x)} a_i e_i$ for some $a_i \in \R$. In fact, if $i\not\in A(x)$, then for all $y\in\partial_\Lambda \dom$ with $y \sim x$, $y\pm\varepsilon e_i \in\dom$. By overspill, there is $\eta \in \R$ such that $x+r e_i \in \Omega$ for all $0\leq|r|<\eta$. As a consequence, $\normal_x \cdot e_i =0$, as desired.
	
	Since $u(t) \in S^2(\Omega)$ for all $t\geq0$, if $y, z \in \dom$ satisfy $y \sim z$, then $\grad u(t,y) \sim \grad u(t,z)$.
	In particular, the boundary conditions \ref{neumann} ensure that for all $y\in\partial_\Lambda \dom$ with $y \sim x$, if $i \in A(x)$, then $\D_i^\pm u(t,y) \sim 0$.
	Hence, for any of such $y$ we have the equalities
		$$
		\frac{\partial \sh{u}(t,x)}{\normal}
		=
		\sh{\left(\grad u(t,y)\right)}\cdot \normal
		=
		\sh{\left(\sum_{i \in A(x)} a_i \D_i^\pm u(t,y)\right)}
		=
		0.
		$$
	Since the above argument holds for any $x \in \partial \Omega$, we deduce
		$$\frac{\partial \phi(\sh{u})}{\partial \normal} = 0 \text{ in } [0,+\infty) \times \partial\Omega.$$
	As a consequence, $\sh{u}$ is a classic global solution of Problem \ref{mark}.
\end{proof}


\subsection{The case $u^+ = +\infty$}

We will now discuss coherence of the grid solution to problem \ref{pb discreto} with the measure-valued solution to equation \ref{mark} under the hypothesis that $u^+ = +\infty$.

\begin{theorem}\label{equivalenza smarazzo}
	Let $[u], [\ns{\phi(u)}]$ be the
	grid solution of Problem \ref{pb discreto}, and let $\nu(t,x)$ the Young measure associated to $u$.
	If $[\ns{\phi(u)}] \in L^2([0,+\infty),H^1(\Omega))$,
	then $[u], [\ns{\phi(u)}]$ is an entropy Radon measure solution of problem \ref{mark} in the sense of equation \ref{soluzione smarrazzo}.
\end{theorem}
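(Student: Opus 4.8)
The plan is to verify each of the four conditions in the definition of entropy Radon measure solution, drawing on the structural results already established for the grid solution. First I would set up the decomposition of $[u]$: by Corollary \ref{corollario baricentro} applied to $u(t)$, the parametrized measure $\nu(t,x)$ produces a regular part $u_r(t,x) = \int_\R d\nu_{(t,x)}$ that lies in $L^1([0,+\infty)\times\Omega)$ since $\norm{u(t)}_1 = \norm{u_0}_1 \in \fin$ by Theorem \ref{esistenza}. The singular part is then $\mu = [u] - u_r$, and since $[u] \in \tests'(\R\times\Omega)$ by Lemma \ref{lemma equivalenza 1} and $u_r \in L^1$, one checks $\mu$ is a nonnegative Radon measure — nonnegativity following from $u(t,x)\geq 0$ (Proposition \ref{ex invariant set}) and the fact that the Young-measure mass defect $1 - \nu_{(t,x)}(\R)$ corresponds to the portion of the monad where $u$ is unlimited, so the "lost mass" shows up as a nonnegative singular contribution. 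Condition (1), i.e.\ $\lambda_i \geq 0$ and $\lambda_1 + \lambda_2 = 1$, and condition (2), the representation $u_r = \sum \lambda_i S_i(v)$, come from the hypothesis $[\ns{\phi(u)}] \in L^2([0,+\infty),H^1(\Omega))$: exactly as in part (\ref{ym}) of Theorem \ref{mina}, this forces $\int_\R \phi(\tau)\,d\nu_{(t,x)}$ to be single-valued a.e., so $\nu_{(t,x)}$ is a.e.\ a superposition of Dirac masses on the branches $S_1$ and $S_3$ of $\phi$ (there is no $S_3$-at-infinity complication here because $u^+ = +\infty$ means the third branch escapes to infinity and contributes to $\mu$ rather than to $u_r$); reading off $\lambda_1, \lambda_2$ from the weights and noting $v = [\ns{\phi(u)}] = 0 \Rightarrow u_r = 0$ gives condition (2).

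For condition (3), I would start from the very weak formulation \ref{less regular} of Lemma \ref{lemma equivalenza 1}, which says $[u]$ and $[\ns{\phi}(u)]$ satisfy
\begin{equation*}
\int_0^T \langle [u], \varphi_t \rangle + \langle [\ns{\phi}(u)], \Delta \varphi \rangle dt + \int_\Omega u_0(x)\varphi(0,x)\,dx = 0.
\end{equation*}
Since $[\ns{\phi(u)}] \in L^2([0,+\infty),H^1(\Omega))$, Proposition \ref{coerenza formulazione discreta} lets me integrate by parts in space and rewrite $\langle [\ns{\phi}(u)], \Delta\varphi\rangle = -\int_\Omega \nabla[\ns{\phi}(u)]\cdot\nabla\varphi\,dx$; then substituting $[u] = u_r + \mu$ and splitting the distributional pairing gives exactly equation \ref{soluzione smarrazzo} with $v = [\ns{\phi(u)}]$. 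Condition (4), the entropy inequality \ref{entropy inequality} for $u_r$ and $v$, is obtained by repeating essentially verbatim the entropy-condition computation from part (\ref{ym}) of Theorem \ref{mina}: apply Lemma \ref{lemma entropy condition}, use that $\grad^-\ns{\phi}(u)$ is finite on the equi-integrable part and Lemma \ref{lemma derivata H^1}, pass to the shadow using Theorem \ref{mainthm}, and invoke Proposition 3.3 of \cite{ema2} for the lower-semicontinuity inequality on the quadratic term $\ns{g}'(\ns{\phi}(u))|\grad^-\ns{\phi}(u)|^2$. The only genuinely new bookkeeping is that $G^\star$ is now defined with two branches $S_1, S_2$ rather than three, but this is cosmetic.

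The main obstacle I anticipate is the correct identification of the singular part $\mu$ and the proof that it is a \emph{nonnegative} Radon measure, together with the claim that the portion of $u$ escaping to $+\infty$ contributes only to $\mu$ and not to $u_r$. The subtlety is that $[u] - u_r$ is a priori only a distribution, and one must argue, using $\norm{u(t)}_1 \in \fin$ and the nonnegativity of $u$, that it extends to a positive linear functional on $C^0_c$, hence a Radon measure by Riesz representation — this is where Theorem \ref{parametrized measures} and Corollary \ref{corollario baricentro}, specifically the inequality $0 \leq \nu_x(\R) \leq 1$ and the remark that the defect is caused by $u$ being unlimited, do the essential work. A secondary point requiring care is that the entropy inequality is asserted for $u_r$ and $v$ alone, so one must check that the singular mass does not spoil the computation: this works because $G(s) = \int_0^s g(\phi(\tau))\,d\tau$ is bounded when $g$ is bounded (as $\phi$ is bounded for $u^+ = +\infty$) so the escaping mass contributes a bounded-in-$L^1$ term whose shadow is captured by $G^\star(u_r)$, while the flux and dissipation terms involve only $v = [\ns{\phi(u)}]$ which already lives in $L^2H^1$ independently of the singular part. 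Once these structural points are pinned down, everything else is a transcription of arguments already present in the proofs of Lemma \ref{lemma equivalenza 1} and Theorem \ref{mina}.
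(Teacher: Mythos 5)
Your proposal follows essentially the same route as the paper's proof: the same decomposition $u_r = $ barycentre of $\nu$ and $\mu = [u] - u_r$, the same use of the $H^1$ hypothesis to force $[\ns{\phi(u)}]$ to be single-valued and hence $\nu$ to be a superposition of at most two Dirac masses on the stable branches, the same derivation of equation \ref{soluzione smarrazzo} from Lemma \ref{lemma equivalenza 1} and Proposition \ref{coerenza formulazione discreta}, and the same appeal to Lemma \ref{lemma entropy condition} and the argument of Theorem \ref{mina}(\ref{ym}) for the entropy inequality. The additional care you devote to the positivity of $\mu$ and to the equi-integrability of $G$ fills in points the paper leaves implicit, but does not change the argument.
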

\begin{proof}
	Let $$u_r(t,x) = \int_{\R} \tau d\nu(t,x)$$ be the barycentre of $\nu$, and let $\mu(t) = [u](t)-u_r(t)$.
	The Young measure $\nu(t,x)$ corresponds to the regular term of the solution to problem \ref{mark}, and the Radon measure $\mu$ corresponds to the singular term.
	
	The hypothesis $[\ns{\phi(u)}](t) \in H^1(\Omega)$ ensures that $[\ns{\phi(u)}](t,x)$ is single-valued for a.e.\ $t \geq 0$ and $x \in \Omega$.
	If $[\ns{\phi(u)}](t,x) = c \not = 0$, this implies that $\nu(t,x)$ is a superposition of at most two Dirac measures centred at $S_1(c)$ and at $S_2(c)$.
	If $[\ns{\phi(u)}](t,x) = 0$, then $\nu(t,x)$ is a Dirac Young measure centred at $0$.
	
	Notice that, for any $\varphi \in C^1([0,T] \times \overline{\Omega})$ with $\varphi(T,x) = 0$ for all $x \in \Omega$, we have the equality
	$$
	\int_0^T \langle u, \ns{\varphi} \rangle dt = \int_0^T \int_{\Omega} [u] \varphi dxdt =  \int_0^T \int_{\Omega} u_r \varphi dx dt + \int_{0}^{T} \ldual \mu,\varphi\rdual dt,
	$$
	for any arbitrary $T > 0$.
	By Proposition \ref{coerenza formulazione discreta}, by equality \ref{less regular} and by the hypothesis that $[\ns{\phi(u)}](t) \in H^1(\Omega)$, we deduce that $u_r$, $\mu$ and $[\ns{\phi(u)}]$ satisfy the equality
	$$
	\int_{0}^T \ldual \mu, \varphi_t\rdual dt + \int_0^T \int_{\Omega} u_r \varphi_t - \nabla [\ns{\phi(u)}] \cdot \nabla \varphi dx dt + \int_{\Omega} u_0(x) \varphi(0,x) dx = 0,
	$$
	so that $[u]$ and $[\ns{\phi}(u)]$ induce a Radon entropy solution to problem \ref{mark} in the sense of equation \ref{soluzione smarrazzo}.
	
	The entropy condition under the hypothesis that $G$ is equi-integrable can be deduced from Lemma \ref{lemma entropy condition} and from an argument analogous to the one in the proof of point (\ref{ym}) of Theorem \ref{mina}.
\end{proof}

We can also prove that the singular part of the Radon measure solution can be disintegrated as in equation \ref{singular disintegration}. 

\begin{proposition}\label{equivalenza smarazzo 2}
	Let $\mu$ be defined as in the proof of Theorem \ref{equivalenza smarazzo}.
	There exists a function $\tilde{\mu} : L^\infty([0,+\infty), \prob{(\Omega)})$ such that
	$$
	\mu(t) = \left(\int_{\Omega} u_0(x) dx - \int_{\Omega} u_r(t,x) dx \right) \tilde{\mu}(t).
	$$
	Moreover, the support of $\tilde{\mu}(t)$ is a null-set with respect to the $k$-th dimensional Lebesgue measure for all $t\geq0$. 
\end{proposition}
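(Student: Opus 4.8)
The plan is to read off the three assertions --- positivity of $\mu(t)$, the value of its total mass, and the smallness of its support --- from the invariant-set and mass-conservation properties of the grid solution together with the structure of $\phi$.

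First I would record that $[u](t)$ extends to a finite positive Radon measure on $\overline{\Omega}$. By Proposition~\ref{ex invariant set}(1), $u(t,x)\ge 0$ for all $x\in\dom$ and all $t\ge 0$, so $\langle u(t),\ns\varphi\rangle\ge 0$ for every $\varphi\in\test(\Omega)$ with $\varphi\ge 0$; hence $[u](t)$ is a positive distribution, and since $\norm{u(t)}_1=\norm{\ns u_0}_1\in\fin$ by Theorem~\ref{esistenza}, the functional $\varphi\mapsto\sh{\langle u(t),\ns\varphi\rangle}$ is a bounded positive functional on $C^0(\overline\Omega)$, i.e.\ a finite positive Radon measure whose total mass is $\sh{\norm{\ns u_0}_1}=\int_\Omega u_0\,dx$ (using $u_0\ge0$; the Neumann conditions~\ref{neumann} together with mass conservation keep all the mass inside, and in any event $\partial\Omega$ is $\leb$-null). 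Next, by Corollary~\ref{corollario baricentro} $u_r(t)\in L^1(\Omega)$, and a truncation argument --- test against $\ns{g_M}(u(t))$ for $g_M\in C^0_c(\R)$ with $0\le g_M(\tau)\le\tau$ and $g_M(\tau)=\tau$ on $[0,M]$, apply Theorem~\ref{parametrized measures}, and let $M\to\infty$ --- yields $[u](t)\ge u_r(t)\leb$ as measures. Hence $\mu(t)=[u](t)-u_r(t)\leb$ is a positive Radon measure of total mass $\int_\Omega u_0\,dx-\int_\Omega u_r(t,x)\,dx$. On the measurable set of times where this number is positive set $\tilde\mu(t):=\mu(t)/\big(\int_\Omega u_0-\int_\Omega u_r(t)\big)$, and on its complement (where $\mu(t)=0$, e.g.\ at $t=0$) set $\tilde\mu(t):=\delta_{x_0}$ for a fixed $x_0\in\Omega$; since $t\mapsto[u](t)$ is $\ns{}$continuous and $t\mapsto u_r(t)$ is measurable, this $\tilde\mu$ belongs to $L^\infty([0,+\infty),\prob(\Omega))$ and gives the desired factorization.

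For the support statement I would prove that $\mu(t)$ is concentrated on the ``escape set'' $A_t:=\{x\in\Omega:\nu(t,x)(\R)<1\}$ and that $\leb(A_t)=0$. For the concentration, split $\Omega$ up to a $\leb$-null set into $\{[\ns\phi(u)](t)>0\}$ and $\{[\ns\phi(u)](t)=0\}$. On the first set, since $\ns\phi(u(t))$ is bounded and nearstandard in $H^1$, its Young measure is Dirac a.e.\ (by the argument of Proposition~\ref{coerenza formulazione discreta} and Theorem~\ref{mina}), so for a.e.\ such $x$ the values $\ns\phi(u(t,\cdot))$ are $\approx[\ns\phi(u)](t,x)>0$ on a relative-density-one subset of the monad of $x$; since Hypotheses~\ref{ipotesi di partenza} make $\{\tau\ge0:\phi(\tau)\ge\delta\}$ bounded (hence $\ns{}$compact) for every standard $\delta>0$, on that subset $u(t,\cdot)$ takes values in $\fin$, so $x\notin A_t$, the monadic mass density of $u(t)$ at $x$ equals $u_r(t,x)$, and $\mu(t)$ charges no neighbourhood of $x$. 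On the second set, either $\nu(t,x)=\delta_0$ (so $u_r(t,x)=0$ and again no singular mass) or $x\in A_t$. Thus $\mu(t)$ is concentrated on $A_t$ together with the two exceptional $\leb$-null sets. Finally $\leb(A_t)=0$: by Theorem~\ref{esistenza}, $\norm{u(t)}_1=\norm{\ns u_0}_1\in\fin$, so for fixed standard $\theta,M>0$ a Besicovitch-covering argument applied to the set of $x$ at which the relative density of $\{y\approx x:u(t,y)>M\}$ in the monad of $x$ is $\ge\theta$ --- each such $x$ carries a ball $B$ with $[u](t)(B)\gtrsim\theta M\,\leb(B)$ --- shows that this set has $\leb$-measure $\lesssim\norm{\ns u_0}_1/(\theta M)$; letting $M\to\infty$ and taking the countable union over $\theta\in\Q^+$ gives $\leb(A_t)=0$, and with it the claim that $\tilde\mu(t)$ is supported on a $\leb$-null set.

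The main obstacle is the argument on $\{[\ns\phi(u)](t)>0\}$, namely excluding ``diffuse'' configurations in which $u(t,\cdot)$ is $\ns{}$unlimited only on an infinitesimal-density portion of each monad yet still contributes an absolutely continuous amount to the monadic mass density of $u(t)$. Such a configuration would create, at the boundary of that portion, grid differences $|\D^-_i\ns\phi(u(t))|\gtrsim c/\dx$ with $c=[\ns\phi(u)](t,x)>0$, and one must check that the smallest density producing a non-infinitesimal mass already forces $\norm{\grad^-\ns\phi(u(t))}_2$ to exceed, by a non-infinitesimal amount, the $H^1$ norm of any standard function --- contradicting the hypothesis that $\ns\phi(u(t))$ is nearstandard in $H^1$; here one uses that the branches of $\phi$ joining $0$ to the unlimited regime must pass through the maximum $\phi(u^-)$, and, if necessary, the entropy estimate~\ref{entropy condition} and the asymptotic analysis of Section~\ref{properties}. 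A secondary technical point is to confirm that no mass of $u(t)$ concentrates on $\partial\Omega$, which follows from the Neumann conditions~\ref{neumann} and from $\partial\Omega$ being a smooth, hence $\leb$-null, hypersurface, so that it may be absorbed into the null set carrying $\tilde\mu(t)$.
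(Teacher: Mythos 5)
Your proof of the factorization itself takes the same route as the paper: the total mass of $\mu(t)$ is read off from the conservation law $\norm{u(t)}_1=\norm{\ns{u}_0}_1$ of Theorem \ref{esistenza} together with $\int_{\Omega}u_r(t,x)\,dx+\int_{\Omega}d\mu(t)=\sh{\norm{u(t)}_1}$, and $\tilde\mu(t)$ is obtained by normalizing. Your additional verifications --- positivity of $[u](t)$ from Proposition \ref{ex invariant set}, positivity of $\mu(t)$ via truncation against $g_M\in\bcf(\R)$ and Theorem \ref{parametrized measures} --- are details the paper omits but which are genuinely needed for $\tilde\mu(t)$ to land in $\prob(\Omega)$ rather than merely in $\rad(\Omega)$; this part of your argument is sound.

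On the support statement the two proofs diverge, and here there is a real gap. The paper disposes of it in one line, as a consequence of $\norm{u(t)}_1\in\fin$. Your covering argument extracts exactly what $L^1$-finiteness can give: the set of $x$ at whose monad a non-infinitesimal fraction $\geq\theta$ of grid points carries values $>M$ has Lebesgue measure $O(1/(\theta M))$, so the escape set $A_t=\{x:\nu_{t,x}(\R)<1\}$ is $\leb$-null. That is correct, and is presumably the argument the author intended. But, as you yourself flag, this does not yet show that $\mu(t)=[u](t)-u_r(t)\leb$ is concentrated on $A_t$: a configuration in which $u(t,\cdot)$ is unlimited only on an infinitesimal-density portion of each monad (say $u\sim\varepsilon^{-1/2}$ on a fraction $\varepsilon^{1/2}$ of the points) has $\nu_{t,x}(\R)=1$ and finite $u_r(t,x)$, yet contributes a non-infinitesimal \emph{absolutely continuous} term to $\mu(t)$, and it is perfectly compatible with $\norm{u(t)}_1\in\fin$. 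Your proposed exclusion of such configurations via the nearstandardness of $\ns{\phi}(u(t))$ in $H^1$ --- the interface of the unlimited region forces $\norm{\grad^-\ns{\phi}(u(t))}_2$ to be unlimited unless that interface is metrically small --- is the right idea, but it remains a sketch, and it is the one step that neither your proposal nor the paper's own proof actually carries out. A secondary caveat, shared by both arguments: what is obtained is that $\mu(t)$ is \emph{concentrated} on a $\leb$-null set, not that its closed topological support is null (the latter fails if $A_t$ is dense), so the statement must be read in the former sense.
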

\begin{proof}
	By definition of $\mu$, $\mu$ can be interpreted as a function $\mu: L^\infty([0,+\infty),\rad{(\Omega)})$ that satisfies
	$$
	\int_{\Omega} u_r(t,x) dx+\int_{\Omega} d\mu(t) = \sh{\norm{u(t,x)}_1}.
	$$
	By Theorem \ref{esistenza}, $\sh{\norm{u(t,x)}_1} = \int_{\Omega} u_0(x) dx$, so the first part of the assertion is proved.
	The second part of the assertion is a consequence of $\norm{u(t)}_1 \in \fin$.
\end{proof}

\section{Asymptotic behaviour of the grid solutions to the ill-posed PDE} \label{properties}

In this section, we will draw conclusions about the asymptotic behaviour of the grid solutions to problem \ref{mark} by studying the asymptotic behaviour of the solutions to the grid function formulation \ref{pb discreto}.
In particular, we will carry out this study by determining the stability of the steady states of problem \ref{pb discreto}.

A steady state of problem \ref{pb discreto} is a grid function $\steady \in \grid{\Omega}$ that satisfies $\lap\ns{\phi}(\steady) = 0$.
It can be proved that, by definition of $\lap\ns{\phi}$ and thanks to the boundary conditions \ref{neumann}, $\steady$ is a steady state if and only if $\ns{\phi}(\steady(x)) = c$ for all $x \in \dom$.

\begin{lemma}\label{lemma grafi}
$\lap \ns{\phi(\steady)}=0$ if and only if there exists $c \in \hR$ such that $\ns{\phi}(\steady(x))=c$ for all $x \in \dom$.
\end{lemma}
\begin{proof}
For a matter of commodity, let $|\dom|=d$ and $\dom = \{x_1, \ldots, x_d\}$.

Let $A=(a_{i,j})$ be the matrix associated to the linear operator $-\lap : \grid{\Omega}\rightarrow \grid{\Omega}$ and denote by $r$ the rank of $A$.
Since $\lap(\steady)=0$ whenever there exists $c \in \hR$ such that $\ns{\phi}(\steady(x))=c$ for all $x \in \dom$, in order to prove the desired equivalence it is sufficient to prove that $r=d-1$.

Let $G=(V,E)$ be a graph with vertex set $V=\{1, \ldots, v\}\subseteq \ns{\N}$.
Recall that the Laplacian matrix of a graph $G=(V,E)$ is defined as the matrix $L=(l_{i,j})$ such that $l_{i,i}$ is the degree of $i$ and $l_{i,j}=-1$ if and only if $\{i,j\}\in E$.
Notice that, by definition, the Laplacian matrix of a graph is a square symmetric matrix of order $v$.

Consider the graph $G=(\dom,E)$, where $E=\{\{x_i,x_j\}: \norm{x_i-x_j}_1=\varepsilon\}$.
By definition of $\lap$ and thanks to the boundary conditions \ref{neumann}, $\varepsilon^2A$ is the Laplacian matrix of $G$.
By Lemma 3 of \cite{graph}, the multiplicity of $0$ as eigenvalue of $\varepsilon^2 A$ is equal to $d-g$, where $g$ denotes the number of connected components of $G$.
Since $\Omega$ is connected, $g=1$, so that $r=d-1$, as desired.
\end{proof}

As a consequence, a steady state $\steady$ can assume up to three values $\omega_1 \in (0, u^-]$, $\omega_2 \in (u^-, u^+)$ and, when $u^+ < +\infty$, $\omega_3 \in [u^+, +\infty)$ satisfying $\phi(\omega_1) = \phi(\omega_2) = \phi(\omega_3)$.
By Proposition \ref{coerenza formulazione discreta}, the steady states of the grid function formulation \ref{pb discreto} induce a steady state for problem \ref{mark}.


Notice however that a steady state of problem \ref{mark} corresponds to a grid function $\widetilde{v}$ that satisfies only the weaker condition $\lap\ns{\phi}(\widetilde{v}) \sim 0$.
If $\norm{\widetilde{v}}_\infty \in \fin$, then there exists a steady state $\steady$ of problem \ref{pb discreto} with $\norm{\steady-\widetilde{v}}_\infty\sim0$.
In this case, the stability of $\widetilde{v}$ can be determined by studying the stability of $\steady$: if $\tilde{u}$ is the only asymptotically stable equilibrium infinitesimally close to $\tilde{v}$, then $\tilde{v}$ is in the basin of attraction of $\tilde{u}$ and, as a consequence, $v$ is an asymptotically stable steady state of problem \ref{pb discreto}.
If, on the other hand, $\widetilde{v}$ is infinitesimally close to some steady states that are not asymptotically stable, then problem \ref{pb discreto} with initial condition $\tilde{v}$ might evolve towards an asymptotically stable steady state $\tilde{w} \not \equiv \tilde{v}$.
In this case, $\tilde{v}$ would not be a stable steady state of problem \ref{mark}.

Under the hypothesis $u^+ = +\infty$ and $\norm{\widetilde{v}}_\infty \not \in \fin$, then $\widetilde{v}$ induces a measure-valued steady state of problem \ref{mark}, but there might not exist a steady state $\steady$ of the grid function formulation \ref{pb discreto} which satisfies $\norm{\steady-\widetilde{v}}_\infty\sim0$.
Nevertheless, in section \ref{asymptotic +infty}, we will show that the asymptotic behaviour of the grid solutions to problem \ref{mark} can be characterized a posteriori from the asymptotic behaviour of the solutions to problem \ref{pb discreto}.

\subsection{Asymptotic behaviour of the solutions to problem \ref{pb discreto}}

Since problem \ref{pb discreto} corresponds to a hyperfinite dynamical system, we need to introduce an appropriate notion of stability for its steady states.
Our choice is to use the nonstandard counterpart of the classical notion of stability in the $L^\infty$ norm for discrete dynamical systems.
In the following definition, it is useful to keep in mind that $u \in \grid{\Omega}$ can be identified with a vector in the euclidean space $\hR^{|\dom|}$.

\begin{definition}
	Let $f: \grid{\Omega} \rightarrow \ns{\R}$ and let $v(t) : \ns{\R} \rightarrow \grid{\Omega}$ be the solution of the nonstandard differential equation $u' = f(u)$ with initial data $v(0)$.
	We will say that $u \in \grid{\Omega}$ is
	\begin{itemize}
		\item \stable\ iff for all $\eta \in \ns{\R}, \ \eta > 0$ there exists $\delta \in \ns{\R}, \ \delta > 0$ such that 
		$\norm{u-v(0)}_\infty < \delta$ implies $\norm{u-v(t)}_\infty<\eta$ for all $t \in \ns{\R_+}$;
		\item $\ns{}$attractive iff there exists $\rho \in \ns{\R}, \ \rho > 0$ such that 
		$\norm{u-v(0)}_\infty < \rho$ implies $\ns{\lim_{t \rightarrow +\infty}}\norm{u-v(t)}_\infty = 0$;
		\item asymptotically \stable \ iff it is \stable\ and $\ns{}$attractive;
		\item globally asymptotically \stable \ iff it is \stable\ and for all $v(0) \in \dom$ $\ns{\lim_{t \rightarrow +\infty}}\norm{u-v(t)}_\infty = 0$;
		\item \unstable \ iff it is not \stable.
	\end{itemize}
	Notice that a necessary condition for $u$ to be \stable \ or $\ns{}$attractive is that $f(u) = 0$, i.e.\ $u$ must be an equilibrium point of the differential equation.
\end{definition}

Since the $\ns{L^\infty}$ norm over $\dom$ is equivalent to the euclidean norm in $\hR^{|\dom|}$, the stability in the $\ns{L^\infty}$ norm for the grid function formulation \ref{pb discreto} can be studied by exploiting the theory of finite dynamical systems.

For the following analysis of the asymptotic behaviour of solutions of system \ref{pb discreto}, we assume that the steady states are isolated in $\simplex^+(\ns{u_0})$, i.e.\ that there is only a hyperfinite number of steady states in $\simplex^+(\ns{u_0})$.
For a discussion of this hypothesis and for sufficient conditions that ensure the existence of a hyperfinite number of steady states in $\simplex^+(\ns{u_0})$, we refer to Lizana and Padron \cite{sd}.
Their hypothesis is a sharpening of the condition that $S_1', S_2'$ and $S_3'$ must be linearly independent on the spinoidal interval $(u^-,u^+)$, already discussed in \cite{stable patterns}.

\begin{proposition}\label{limiti}
	If the steady states of \ref{pb discreto} are isolated in $\simplex^+(\ns{u_0})$ and if $M$ is the largest positively invariant set contained in
	$$\simplex^+(\ns{u_0})\cap\left\{ f \in \grid{\Omega} : \phi(f(x)) \text{ is constant} \right\},$$
	then 
	$\ns{\lim_{t \rightarrow +\infty}} u(t) \in M$.
	In particular, system \ref{pb discreto} has at least an asymptotically \stable \ steady state.
\end{proposition}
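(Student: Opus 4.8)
The plan is to recognize Proposition~\ref{limiti} as an instance of the LaSalle invariance principle, applied to the hyperfinite dynamical system \ref{pb discreto}. First I would transfer the classical LaSalle invariance principle to the nonstandard setting: since $u\in\grid{\Omega}$ is identified with a vector in $\hR^{|\dom|}$, and by Corollary~\ref{global ex} the trajectory $u(t)$ stays in the $\ns{}$compact set $\simplex^+(\ns{u_0})$ for all $t\geq 0$, the transfer of LaSalle's theorem tells us that $\ns{\lim_{t\to+\infty}} u(t)$ lies in the largest positively invariant set contained in $\{f : \dot V(f) = 0\}$, where $V$ is any internal Lyapunov function for the system. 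So the crux is to produce the right Lyapunov functional and to identify its zero-derivative set with $\simplex^+(\ns{u_0})\cap\{f : \phi(f(x)) \text{ constant}\}$.

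The natural candidate is the entropy functional $\mathcal{E}(u) = \int_{\dom}\ns{G}(u)\,d\Lambda^k$ for a strictly increasing $g\in C^1(\R)$, $G' = g\circ\phi$ — or, more simply, the functional obtained by choosing $g$ so that the dissipation term is manifestly a sum of squares. Using Lemma~\ref{lemma entropy condition}, and integrating the identity $\ns{G}(u)_t = \div^-(\ns{g}(\phi(u))\grad^+\phi(u)) - \grad^-\ns{g}(\phi(u))\cdot\grad^-\phi(u)$ over $\dom$, the divergence term vanishes by the Neumann boundary conditions \ref{neumann} via discrete summation by parts, leaving
$$
\frac{d}{dt}\int_{\dom}\ns{G}(u)\,d\Lambda^k = -\int_{\dom}\grad^-\ns{g}(\phi(u))\cdot\grad^-\phi(u)\,d\Lambda^k.
$$
Since $g$ is increasing, $\ns{g}(\phi(u(x))) - \ns{g}(\phi(u(x-\dx e_i)))$ has the same sign as $\phi(u(x))-\phi(u(x-\dx e_i))$, so each summand is nonnegative, and the right-hand side is $\leq 0$. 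Thus $\mathcal{E}$ is an internal Lyapunov function. Its derivative vanishes exactly when $\grad^-\phi(u) = 0$ on all of $\dom$, i.e.\ when $\phi(u(x))$ is constant in $x$ — this is precisely the set $\{f : \phi(f(x)) \text{ constant}\}$. Combined with the invariant set $\simplex^+(\ns{u_0})$ from Theorem~\ref{esistenza} and Proposition~\ref{ex invariant set}, the transferred LaSalle principle yields $\ns{\lim_{t\to+\infty}} u(t)\in M$, where $M$ is the largest positively invariant subset of that intersection.

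For the last sentence — existence of at least one asymptotically \stable\ steady state — I would argue as follows. By the hypothesis that steady states are isolated in $\simplex^+(\ns{u_0})$, the $\omega$-limit set of any trajectory, being a connected positively invariant subset of the hyperfinite set $M$ of equilibria, is a single point; hence every trajectory converges to a steady state. Since the Lyapunov function $\mathcal{E}$ is continuous and strictly decreasing along non-constant trajectories, a steady state $\steady$ at which $\mathcal{E}$ attains its minimum over $\simplex^+(\ns{u_0})$ (which exists by $\ns{}$compactness) cannot be \unstable: a trajectory starting near it would have to decrease $\mathcal{E}$, contradicting minimality unless it stays at a level arbitrarily close to $\mathcal{E}(\steady)$, and isolatedness then forces it back to $\steady$. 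A more careful version of this argument — showing that a strict local minimum of a Lyapunov function is asymptotically \stable\ — is the standard converse Lyapunov statement and transfers directly. I expect the main obstacle to be the bookkeeping in the transfer of LaSalle's principle to the internal setting: one must check that the classical hypotheses (a $C^1$ Lyapunov function on a compact forward-invariant set) hold internally, which requires knowing that $\mathcal{E}$ is $\ns{}$smooth as a function on $\hR^{|\dom|}$ — this follows from $\phi\in\ns{C^1(\R)}$ and $G\in\ns{C^1}$ — and that the convergence $\ns{\lim_{t\to+\infty}}u(t)$ in the statement is interpreted in the appropriate nonstandard sense, consistent with the stability definitions given above.
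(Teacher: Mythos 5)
Your proposal is correct in substance, but it takes a different route from the paper only in the sense that the paper does not argue at all: its entire proof is the citation ``This is a consequence of Proposition 2 of \cite{sd}'', i.e.\ it transfers a LaSalle-type convergence result already established by Lizana and Padr\'on for the finite spatially discrete model. What you have done is reconstruct the content of that cited proposition inside the nonstandard framework: the entropy functional you build (with the simplest choice $g=\mathrm{id}$ it is exactly $L(u)=\sum_i\int_0^{u_i}\phi(s)\,ds$, the Lyapunov function the paper itself invokes later in the proof of Proposition \ref{stabilita 1} via Proposition 4 of \cite{discrete 2}), the cancellation of the divergence term by the Neumann conditions \ref{neumann} under discrete summation by parts, the sign argument identifying $\{\dot{\mathcal{E}}=0\}$ with $\{f:\phi(f(x))\text{ constant}\}$, and the transfer of LaSalle's principle on the $\ns{}$compact invariant set $\simplex^+(\ns{u_0})$. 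This buys self-containedness and makes visible exactly which structural features (mass conservation, Neumann boundary, monotone $g$) drive the convergence, at the cost of redoing work the paper outsources.

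Two points deserve more care than your sketch gives them. First, concluding that $\grad^-\phi(u)\equiv 0$ forces $\phi(u(x))$ to be constant on all of $\dom$ uses that the grid domain is connected as a graph; this is inherited from connectedness of $\Omega$ and the infinitesimal mesh, but it is a hypothesis being used and should be stated. Second, your argument that the minimizer of $\mathcal{E}$ over $\simplex^+(\ns{u_0})$ is asymptotically \stable\ is the weakest step: a global minimizer of a Lyapunov function need not be a \emph{strict} local minimum, and without strictness the stability claim does not follow immediately. You need to combine isolatedness of the equilibria with the fact that sublevel sets of $\mathcal{E}$ near an isolated minimizing equilibrium are forward invariant neighbourhoods, or else argue as in Lemma \ref{lemma stable implies as} after stability is secured. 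This is standard but not free, and it is precisely the part for which leaning on the transferred Proposition 2 of \cite{sd} (as the paper does) is the economical choice.
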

\begin{proof}
	This is a consequence of Proposition 2 of \cite{sd}.
\end{proof}

We observe that, under the hypothesis that the steady states of system \ref{pb discreto} are isolated in $\simplex^+(\ns{u_0})$, then $\ns{}$stability is equivalent to asymptotic $\ns{}$stability.

\begin{lemma}\label{lemma stable implies as}
	If $\steady$ is a \stable \ steady state of system \ref{pb discreto} and if the steady states are isolated in $\simplex^+(\ns{u_0})$, then $\steady$ is asymptotically \stable.
\end{lemma}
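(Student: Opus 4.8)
The plan is to reduce the claim to a standard fact about finite-dimensional dynamical systems via the transfer principle, using the LaSalle-type invariance argument already encoded in Proposition~\ref{limiti}. Recall that by Corollary~\ref{global ex} the trajectory $v(t)$ with any initial datum in $\simplex^+(\ns{u_0})$ stays in the $\ns{}$compact set $\simplex^+(\ns{u_0})\subseteq\hR^{|\dom|}$ for all $t\geq0$, and that the steady states in $\simplex^+(\ns{u_0})$ are assumed isolated, hence (being an internal set of isolated points in a compact set) hyperfinite in number. The key observation is that system \ref{pb discreto} is a gradient-like system for the energy functional whose dissipation is witnessed by the entropy identity of Lemma~\ref{lemma entropy condition}: taking $g(s)=s$ there, $\ns{G}(u(t)) = \int_0^{u(t,x)}\phi(s)\,ds$ satisfies $\frac{d}{dt}\int_{\dom}\ns{G}(u(t,x))\,d\Lambda^k = -\norm{\grad^-\ns{\phi}(u(t))}_2^2\le 0$, so this quantity is a strict Lyapunov function away from the steady states.

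The steps I would carry out, in order, are the following. First I would record that, since $\steady$ is \stable, there is $\delta>0$ such that $\norm{\steady-v(0)}_\infty<\delta$ forces $\norm{\steady-v(t)}_\infty<\eta_0$ for all $t\geq0$, where $\eta_0>0$ is chosen (using isolation of the steady states) small enough that the closed $\eta_0$-ball around $\steady$ contains no other steady state. Second, I would invoke Proposition~\ref{limiti}: the $\omega$-limit point $\ns{\lim_{t\to+\infty}}v(t)$ exists and lies in the largest positively invariant subset $M$ of the set of grid functions on which $\phi$ is constant — that is, it is a steady state. Third, because the whole trajectory from time $0$ onward stays within distance $\eta_0$ of $\steady$, this limiting steady state is within $\eta_0$ of $\steady$, hence by the choice of $\eta_0$ it equals $\steady$. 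Thus $\ns{\lim_{t\to+\infty}}\norm{\steady-v(t)}_\infty=0$ whenever $\norm{\steady-v(0)}_\infty<\delta$, which is exactly $\ns{}$attractivity with $\rho=\delta$; combined with stability this gives asymptotic \stable ity.

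The one point that needs care — and which I expect to be the main obstacle — is justifying that the $\omega$-limit of the trajectory is a single point equal to $\steady$, rather than merely that the trajectory accumulates at the set of steady states near $\steady$. This is where isolation of the steady states is essential: one argues that the $\omega$-limit set is connected (a standard consequence of continuity of the flow on the compact invariant set $\simplex^+(\ns{u_0})$, transferred from the standard theory of ODEs), is contained in the set of steady states (by the Lyapunov/LaSalle argument of Proposition~\ref{limiti}), and is contained in the $\eta_0$-ball around $\steady$; a connected subset of a discrete set is a single point, and that point must be $\steady$. Everything else is a direct transfer of the classical statement ``a stable equilibrium of a $C^1$ ODE on $\R^m$ with isolated equilibria that admits a Lyapunov function with the indicated strict-decrease property is asymptotically stable,'' applied to the internal system \ref{pb discreto} on $\hR^{|\dom|}$, so I would phrase the final step as an appeal to the transfer principle together with Proposition~\ref{limiti} rather than redo the ODE argument.
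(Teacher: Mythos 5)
Your argument is essentially identical to the paper's proof: use isolation to choose a radius $\rho$ (your $\eta_0$) around $\steady$ containing no other steady state, use $\ns{}$stability to confine the trajectory to that ball, invoke Proposition~\ref{limiti} to get convergence to a steady state, and conclude that the limit must be $\steady$. The additional material on the Lyapunov functional and the connectedness of the $\omega$-limit set is a justification of Proposition~\ref{limiti} itself, which the paper simply cites, so it does not change the route.
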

\begin{proof}
	Suppose that $\steady$ is \stable: since the steady states of system \ref{pb discreto} are isolated, we can find $\rho > 0$ such that if $0 < \norm{\steady-v}_\infty < \rho$ then $v$ is not a steady state of system \ref{pb discreto}.
	By the $\ns{}$stability of $\steady$, we can find $\delta > 0$ such that if $\norm{\steady-v}_\infty < \delta$ then, denoting by $v(t)$ the solution of system \ref{pb discreto} with initial data $v$, $\norm{\steady-v(t)}_\infty < \rho$ for all $t \in \ns{\R_+}$.
	Moreover, by Proposition \ref{limiti} $v(t)$ converges to a steady state of system \ref{pb discreto}.
	By our choice of $\rho$, this steady state must be $\steady$, hence $\steady$ is \attractive.
\end{proof}

\subsection{Steady states of problem \ref{pb discreto}}

For a matter of commodity, we will carry out the study of the steady states of problem \ref{pb discreto} in the case where $k = 1$, and where the spatial domain is $[0,1]_\Lambda = [0,\varepsilon, \ldots, N\varepsilon=1]$, but the analysis can be carried out in higher dimension and with other domains, as argued in \cite{sd}.
Moreover, we identify a grid function $u \in \grid{[0,1]_\Lambda}$ with a vector $u\in\hR^{N+1}$, with the convention that $u_i$, the $i$-th component of $u$, satisfies $u_i = u(i\varepsilon)$.
If $u : \hR \rightarrow \grid{[0,1]_\Lambda}$, we will identify it with a vector-valued function $u : \hR \rightarrow \hR^{N+1}$, with the convention that $u_i(t) = u(t,i\varepsilon)$.

We begin the study of the $\ns{}$stability of the steady states of system \ref{pb discreto} by discussing its homogeneous steady state $\eq = \left(\vsum{\ns{u_0}}, \ldots, \vsum{\ns{u_0}}\right)$.

\begin{proposition}\label{poutpurri}
	The homogeneous steady state $\eq$ of system \ref{pb discreto} has the following properties:
	\begin{itemize}
		\item if $\vsum{\ns{u_0}} < u^-$ or $\vsum{\ns{u_0}} > u^+$, then $\eq$ is \stable;
		\item 
		if $\eq$ is the only steady state of \ref{pb discreto}, then $\eq$ is globally asymptotically \stable;
		\item
		if $u^- < \vsum{\ns{u_0}} < u^+$, then $\eq$ is \unstable.
		Moreover, if $\ns{u_0} \not \sim \eq$ and if the steady states are isolated in $\simplex(\ns{u_0})$, then $u$ converges to a non-homogeneous steady state.
	\end{itemize}
\end{proposition}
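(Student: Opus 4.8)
The plan is to linearise the vector field $f(u)=\lap\ns{\phi}(u)$ of system \ref{pb discreto} at the equilibrium $\eq$ and apply the (transferred) principles of linearised stability and instability, together with Proposition \ref{limiti}, whose underlying Lyapunov functional is $\mathcal{E}(u)=\varepsilon\sum_{x\in[0,1]_\Lambda}\Psi(u(x))$ with $\Psi(s)=\int_0^s\ns{\phi}(\tau)\,d\tau$: indeed, by the discrete summation by parts formula, the Neumann conditions \ref{neumann} and Lemma \ref{lemma entropy condition} with $g$ the identity, $\tfrac{d}{dt}\mathcal{E}(u(t))=-\norm{\grad^+\ns{\phi}(u(t))}_2^2\le 0$, with equality precisely when $u(t)$ is a steady state. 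Let $c$ be the common value of the components of $\eq$, let $H$ be the forward-invariant affine hyperplane of $\grid{[0,1]_\Lambda}$ of functions of the same mass as $\eq$, and $T=\mathbf{1}^{\perp}$ its direction. Since $\eq$ is constant, $Df(\eq)=\ns{\phi}'(c)\,\lap$, and $\lap$ is the self-adjoint discrete Neumann Laplacian on $[0,1]_\Lambda$, with simple eigenvalue $0$ (eigenvector $\mathbf{1}$) and all other eigenvalues $\lambda_j=-4\varepsilon^{-2}\sin^2(j\pi\varepsilon/2)\sim -j^2\pi^2$ appreciably negative; hence $T$ is spanned by the eigenvectors with $j\ge 1$ and $Df(\eq)|_T$ has spectrum $\{\,\ns{\phi}'(c)\lambda_j:1\le j\le N\,\}$. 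By Hypotheses \ref{ipotesi di partenza}, $\ns{\phi}'(c)$ is appreciably positive when $c<u^-$ or $c>u^+$ and appreciably negative when $u^-<c<u^+$; thus $\eq$ is a hyperbolic sink of \ref{pb discreto} restricted to $H$ in the first case, and a hyperbolic source in the second.

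For the first bullet, assume $c<u^-$ or $c>u^+$. All eigenvalues of $Df(\eq)|_T$ are then appreciably negative, so by the principle of linearised stability, transferred to the hyperfinite ODE on $H$ (legitimate since $\phi\in C^1$), the equilibrium $\eq$ is asymptotically \stable\ for the dynamics on $H$. To see that $\eq$ is \stable\ in all of $\grid{[0,1]_\Lambda}$, one observes that for $\delta$ infinitesimal any $v(0)$ with $\norm{v(0)-\eq}_\infty<\delta$ is nonnegative and has mass within $\delta$ of that of $\eq$, so its orbit lives on the corresponding mass hyperplane, whose homogeneous state is still in the sink regime and differs from $\eq$ by at most $\delta$; combining the stability estimates on these nearby hyperplanes, which are uniform because the homogeneous state varies continuously with the mass, yields the claim.

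For the third bullet, assume $u^-<c<u^+$. Then $Df(\eq)|_T$ has the appreciably positive eigenvalue $\ns{\phi}'(c)\lambda_1\sim -\pi^2\,\ns{\phi}'(c)>0$, so by the principle of linearised instability, transferred, $\eq$ is \unstable; in fact \emph{all} eigenvalues of $Df(\eq)|_T$ are appreciably positive, so $\eq$ is a source inside $H$ and there is a neighbourhood of $\eq$ from which every forward orbit of \ref{pb discreto} in $H$ other than the constant one escapes --- hence no non-constant orbit in $H$ converges to $\eq$. Therefore, if $\ns{u_0}\not\sim\eq$ (so in particular $\ns{u_0}\ne\eq$) and the steady states are isolated in $\simplex^+(\ns{u_0})$, then by Proposition \ref{limiti} the solution $u(t)$ converges to a steady state in $\simplex^+(\ns{u_0})$; this limit is not $\eq$, and since $\eq$ is the unique homogeneous steady state of that mass it is non-homogeneous.

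For the second bullet, the spinodal case $u^-<c<u^+$ is incompatible with the hypothesis: there, system \ref{pb discreto} always admits non-homogeneous steady states of the same mass as $\eq$ --- grid functions taking the values $S_i(\ns{\phi}(c))$ with suitable nodal frequencies, see \cite{sd,stable patterns} --- so if $\eq$ is the only steady state then $c<u^-$ or $c>u^+$ (discarding the degenerate endpoints), and the first bullet gives that $\eq$ is \stable. Moreover, the largest positively invariant subset of $\simplex^+(\ns{u_0})\cap\{\,f:\ns{\phi}(f)\text{ is constant}\,\}$ equals the set of steady states of \ref{pb discreto} in $\simplex^+(\ns{u_0})$, namely $\{\eq\}$; so Proposition \ref{limiti} gives $\ns{\lim_{t\to+\infty}}v(t)=\eq$ for every solution with initial datum in $\simplex^+(\ns{u_0})$, and $\eq$ is globally asymptotically \stable. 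The main obstacle, I expect, lies in the hyperfinite bookkeeping: one must make sure that the part of the spectrum of $Df(\eq)$ off the imaginary axis is bounded away from it \emph{appreciably} --- which is exactly the estimate $|\ns{\phi}'(c)\lambda_1|\sim \pi^2\,|\ns{\phi}'(c)|$, requiring $c$ to be appreciably separated from $u^-$ and $u^+$ --- so that the classical stability, instability and LaSalle statements survive transfer, and one must reconcile the notion of $\ns{}$stability used in the paper, which quantifies over all of $\grid{\Omega}$, with the analysis performed on the invariant simplex $H$.
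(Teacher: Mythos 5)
Your proof is correct in substance, but it takes a genuinely different route from the paper: the paper disposes of this proposition in one line by citing Proposition 3 and Corollary 4 of Lizana--Padr\'on \cite{sd}, whereas you reconstruct the argument from scratch by linearising $\lap\ns{\phi}$ at $\eq$, diagonalising the discrete Neumann Laplacian on the mass hyperplane, and invoking the transferred principles of linearised stability/instability together with the LaSalle-type Proposition \ref{limiti}. What your route buys is self-containedness and an explicit picture of why the spinodal interval destabilises $\eq$ (the whole spectrum of $Df(\eq)|_T$ flips sign with $\ns{\phi}'(c)$); what the citation buys is that \cite{sd} has already dealt with the two points you leave soft. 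First, the passage from $\ns{}$stability on the invariant hyperplane $H$ to $\ns{}$stability in all of $\grid{\Omega}$: your ``combine the estimates on nearby mass hyperplanes'' step needs the modulus $\delta(\eta)$ to be uniform in the mass, which you assert but do not prove; a cleaner fix is to use the single Lyapunov function $W(u)=\varepsilon\sum_{x}\left[\Psi(u(x))-\Psi(c)-\phi(c)(u(x)-c)\right]$ with $\Psi'=\phi$, which is nonnegative near $\eq$ with a strict minimum there when $\phi'(c)>0$ and is nonincreasing along orbits, giving full-space $\ns{}$stability directly. Second, in the second bullet you need that the spinodal case always produces non-homogeneous steady states of the prescribed mass (so that ``$\eq$ is the only steady state'' forces $\vsum{\ns{u_0}}\notin[u^-,u^+]$); this is an integer-solvability statement about $n_1\omega_1+n_3\omega_3=(N+1)\vsum{\ns{u_0}}$ that you wave at via \cite{sd,stable patterns} rather than verify. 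Finally, your worry about needing the spectral gap to be \emph{appreciable} is unfounded: since $\ns{}$stability is defined with internal quantifiers over $\ns{\R}$, the transferred linearisation theorems apply verbatim whenever the eigenvalues are merely negative (or positive), even if $\ns{\phi}'(c)$ is infinitesimal.
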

\begin{proof}
	It is a consequence of 
	Proposition 3 and of Corollary 4 of \cite{sd}.
\end{proof}


In addition to the homogeneous steady state $\eq$, system \ref{pb discreto} may have many non-homogeneous steady states.
If we denote by $n_i$ the number of components of $\steady$ that assume the value $\omega_i$, by Proposition \ref{esistenza} we obtain the relations
$$
n_3 = N+1-(n_1+n_2),\ n_1 \omega_1 + n_2 \omega_2 + (N+1-(n_1+n_2))\omega_3 = (N+1)\vsum{\ns{u_0}}
$$
that in the case where $u^+ = +\infty$ become
\begin{eqnarray} \label{numero soluzioni}
n_2 = N+1-n_1, & & n_1 \omega_1 + (N+1-n_1) \omega_2 = (N+1)\vsum{\ns{u_0}}.
\end{eqnarray}

In the first step of the study of the $\ns{}$stability of the non-homogeneous steady states of system \ref{pb discreto}, we will prove that all the steady states with $n_2 >1$ are \unstable.

\begin{proposition}\label{stabilita 1}
	If $\steady \in \ns{\R^{N+1}}$ is a steady state of \ref{pb discreto} with $n_2 >1$, then it is \unstable.
\end{proposition}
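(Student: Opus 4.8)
The plan is to realise the natural energy functional of problem \ref{pb discreto} as a strict Lyapunov function, to observe that a steady state with $n_2\geq 2$ fails to be a local minimiser of this energy on the invariant mass simplex through it, and then to upgrade this to \unstable\ behaviour by a LaSalle-type argument; I carry this out in the one–dimensional setting of the proposition, identifying $\steady$ with $(\steady_0,\dots,\steady_N)\in\ns{\R}^{N+1}$. First I would fix a $C^2$ primitive $\Phi$ of $\phi$ (which exists and is standard by Hypotheses \ref{ipotesi di partenza}, with $\Phi''=\phi'$) and set $E(u)=\int_{\dom}\ns{\Phi}(u(x))\,d\Lambda^k$ for $u\in\grid{\Omega}$. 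Differentiating along a solution of \ref{pb discreto} and applying the discrete summation-by-parts identity together with the Neumann conditions \ref{neumann}, exactly as in the derivation of \ref{support}, gives
$$\frac{d}{dt}E(u(t))=\langle\ns{\phi}(u),\lap\ns{\phi}(u)\rangle=-\norm{\grad^+\ns{\phi}(u)}_2^2\leq 0,$$
with equality iff $\ns{\phi}(u)$ is constant on $\dom$, i.e.\ iff $u$ is a steady state. Thus $E$ is non-increasing along trajectories and strictly decreasing off the steady states, and the same is true of its restriction to $\simplex^+(\steady)$, which is $\ns{}$compact and invariant (Proposition \ref{ex invariant set}, Theorem \ref{esistenza}, Corollary \ref{global ex}) and contains $\steady$ since $\omega_1,\omega_2,\omega_3>0$.

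Next I would show that $\steady$ is not a local minimiser of $E$ on $\simplex^+(\steady)$. Since $n_2\geq 2$, choose $a\neq b$ with $\steady_a=\steady_b=\omega_2$ and, for $s\in\ns{\R}$ with $0<|s|<\omega_2$, let $\steady^{\,s}$ agree with $\steady$ except that its $a$-th and $b$-th entries become $\omega_2+s$ and $\omega_2-s$. Then $\steady^{\,s}\in\simplex^+(\steady)$ (it is still nonnegative and has the same mass) and $\norm{\steady^{\,s}-\steady}_\infty=|s|$, while a second-order mean-value estimate using continuity of $\phi'$ at $\omega_2$ yields
$$E(\steady^{\,s})-E(\steady)=\dx\bigl(\ns{\Phi}(\omega_2+s)+\ns{\Phi}(\omega_2-s)-2\ns{\Phi}(\omega_2)\bigr)=\dx\, s^2\bigl(\phi'(\omega_2)+\theta\bigr),\quad \theta\sim 0,$$
which is strictly negative for every $s\neq 0$ because $\phi'(\omega_2)<0$. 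Hence arbitrarily close to $\steady$ there are points of $\simplex^+(\steady)$ of strictly smaller energy.

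To conclude I would argue by contradiction: suppose $\steady$ is \stable. Using the standing hypothesis that the steady states are isolated, fix $\eta_0>0$ so that $\steady$ is the only steady state in $U:=\{f\in\simplex^+(\steady):\norm{f-\steady}_\infty\leq\eta_0\}$, and let $\delta>0$ be the constant furnished by the $\ns{}$stability of $\steady$ for this $\eta_0$. For $0<s<\delta$ the solution $u$ of \ref{pb discreto} with $u(0)=\steady^{\,s}$ then stays in $U$ for all $t\geq 0$, so, since $E$ is non-increasing along $u$ and bounded on the $\ns{}$compact set $U$, the limit $\ns{\lim_{t\to+\infty}}E(u(t))=:E_\infty$ exists and satisfies $E_\infty\leq E(\steady^{\,s})<E(\steady)$. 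By the transfer of LaSalle's invariance principle, the $\omega$-limit set of $u$ is a nonempty invariant subset of $U$ on which $E$ is constant, hence on which $\grad^+\ns{\phi}(\cdot)\equiv 0$; so it consists of steady states lying in $U$ and must equal $\{\steady\}$. Then $u(t)\to\steady$ and $E(u(t))\to E(\steady)>E_\infty$, contradicting $E(u(t))\to E_\infty$. Therefore $\steady$ is \unstable.

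The hard part is the last step. One must check that the finite–dimensional ingredients used there — global existence, $\ns{}$compactness of the mass simplex, LaSalle's invariance principle, and monotone convergence of $E$ along trajectories — transfer correctly to the internal system \ref{pb discreto} on $\ns{\R}^{N+1}$ and interact properly with the definition of $\ns{}$stability; in particular the escape from $U$ has to be produced for perturbations $\steady^{\,s}$ of arbitrarily small norm, including infinitesimal $s$, and the argument must be run inside $\simplex^+(\steady)$ rather than $\simplex^+(\ns{u_0})$, since the statement does not tie the mass of $\steady$ to that of $u_0$. A smaller but necessary point is that the two-bump perturbation $\steady^{\,s}$ genuinely stays nonnegative, which is exactly where the inequalities $\omega_i>0$ enter.
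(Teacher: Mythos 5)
Your proof is correct and follows essentially the same route as the paper: the same Lyapunov functional $\sum_i\int_0^{u_i}\phi(s)\,ds$, the same symmetric two-point perturbation $\omega_2\pm s$ of two unstable-branch entries, and the same second-order computation showing $\steady$ is not a local minimum because $\phi'(\omega_2)<0$. The only difference is that you explicitly verify the Lyapunov property via summation by parts and carry out the LaSalle-type upgrade from ``not a local minimum'' to \unstable\ (using the section's standing isolation hypothesis), steps which the paper delegates to its citation of Proposition 4 of Witelski--Schaeffer--Shearer.
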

\begin{proof}
	As in the proof of Proposition 4 of Witelski, Schaeffer and Shearer \cite{discrete 2}, we will show that $\steady$ is not a stable steady state of \ref{pb discreto} by showing that it is not a local minimum of a suitable Lyapunov function: the thesis follows from this result.
	In order to simplify the notation, suppose that $n_3 = 0$, as the proof for the general case can be deduced by the argument below.
	
	Consider the perturbed steady state given by
	\begin{equation*}
	\left\{
	\begin{array}{rcll}
	u_{i_1}(t) & = & \omega_2 +q&\\
	u_{i_2}(t) & = & \omega_2 -q&\\
	u_{i_k}(t) & = & \omega_2 & \mathrm{for \ } k = 3, 4, \ldots, n_2\\
	u_i(t) & = & \omega_1 & \mathrm{otherwise}
	\end{array}
	\right.
	\end{equation*}
	Let now $V(u_i) = \int_0^{u_i} \phi(s) ds$, and $L(u) = \sum_{i = 0}^{N+1} V(u_i)$.
	From Proposition 4 of \cite{discrete 2}, it can be deduced that $L$ is a Lyapunov function for system \ref{pb discreto}.
	By evaluating $L$ as a function of $q$, we get
	$$
	L(q) = \frac{V(\omega_2 +q) + V(\omega_2 -q) + (n_2-2) V(\omega_2) + (N+1-n_2)V(\omega_1)}{N}
	$$
	so we deduce
	\begin{eqnarray*}
		\left.\frac{dL}{dq} \right|_{0} = 0 & \mathrm{and} & \left.\frac{d^2L}{dq^2}\right|_{0} = \frac{2}{N}\phi'(\omega_2) < 0,
	\end{eqnarray*}
	where the last inequality follows from the hypothesis that $\omega_2 \in (u^-,u^+)$.
	We conclude that $\steady$ is not a local minimum of $L$ and, as a consequence, that $\steady$ is \unstable.
\end{proof}

The characterization of the asymptotically \stable\ non-homogeneous steady states of system \ref{pb discreto} is based on the following bound on $\phi'(\omega_2)$.

\begin{lemma}\label{lemma bound}
	If $\steady$ is an asymptotically \stable\ non-homogeneous steady state of \ref{pb discreto} with $n_2=1$, then it holds the inequality
	\begin{equation}\label{inequality}
	|\phi'(\omega_2)|  < \frac{\max\{\phi'(\omega_1), \phi'(\omega_3)\}^2}{N\min\{\phi'(\omega_1), \phi'(\omega_3)\}}.
	\end{equation}
\end{lemma}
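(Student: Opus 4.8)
The plan is to adapt the Lyapunov-function argument of Proposition \ref{stabilita 1} (following Witelski, Schaeffer and Shearer \cite{discrete 2}): I would translate asymptotic $\ns{}$stability of $\steady$ into the positive definiteness of a finite-dimensional quadratic form and then estimate that form with the Cauchy--Schwarz inequality.

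First I would pin down the shape of $\steady$. By Proposition \ref{stabilita 1} an asymptotically \stable\ steady state of \ref{pb discreto} has $n_2 \le 1$, and since $\steady$ is non-homogeneous the case relevant to \ref{inequality} (and the only possible non-homogeneous pattern when $u^+=+\infty$) is $n_2 = 1$: let $j_0$ be the unique index with $\steady_{j_0} = \omega_2$, so that the remaining $N$ indices carry values $\omega_1$ or $\omega_3$, say in numbers $n_1$ and $n_3$ with $n_1+n_3 = N$.

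Next I would invoke the Lyapunov function $L(u) = \sum_i V(u_i)$ with $V(u_i) = \int_0^{u_i} \phi(s)\,ds$. Summing by parts and using the Neumann conditions \ref{neumann} gives $\frac{d}{dt}L(u(t)) \le 0$, with equality precisely when $\ns{\phi}(u)$ is constant on $\dom$, i.e.\ when $u$ is a steady state, so $L$ is a strict Lyapunov function for \ref{pb discreto} on $\simplex^+(\ns{u_0})$. As in the proof of Proposition \ref{stabilita 1}, an asymptotically \stable\ steady state must then be a strict local minimum of $L$ restricted to the affine mass level set $\{u : \sum_i u_i = \mathrm{const}\}$; the positivity constraints carving out $\simplex^+(\ns{u_0})$ are inactive near $\steady$ because $\omega_1,\omega_2,\omega_3>0$, so the tangent space there is $T = \{v \in \hR^{N+1} : \sum_i v_i = 0\}$, and the restricted Hessian $Q(v) := \sum_i \phi'(\steady_i)\, v_i^2$ must be positive definite on $T$.

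Finally I would compute $Q$ and optimize. Eliminating $v_{j_0} = -\sum_{i\ne j_0} v_i$ yields
$$
Q(v) = \sum_{i\ne j_0}\phi'(\steady_i)\, v_i^2 \; - \; |\phi'(\omega_2)|\Big(\sum_{i\ne j_0} v_i\Big)^{2},
$$
and, since the coefficients $\phi'(\steady_i)$ with $i\ne j_0$ are all positive, the sharp Cauchy--Schwarz bound $\big(\sum_{i\ne j_0} v_i\big)^2 \le \big(\sum_{i\ne j_0} 1/\phi'(\steady_i)\big)\big(\sum_{i\ne j_0}\phi'(\steady_i)\, v_i^2\big)$ shows that $Q > 0$ on $T\setminus\{0\}$ if and only if $|\phi'(\omega_2)| < \big(\sum_{i\ne j_0} 1/\phi'(\steady_i)\big)^{-1}$. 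Because there are exactly $N$ indices $i\ne j_0$, each with $\phi'(\steady_i)\in\{\phi'(\omega_1),\phi'(\omega_3)\}$, one has $\sum_{i\ne j_0} 1/\phi'(\steady_i) \ge N/\max\{\phi'(\omega_1),\phi'(\omega_3)\}$, whence $|\phi'(\omega_2)| < \max\{\phi'(\omega_1),\phi'(\omega_3)\}/N \le \max\{\phi'(\omega_1),\phi'(\omega_3)\}^2/\big(N\min\{\phi'(\omega_1),\phi'(\omega_3)\}\big)$, which is \ref{inequality}.

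The step I expect to be the crux is the implication ``asymptotically \stable\ $\Rightarrow$ $Q$ is \emph{strictly} positive definite on $T$'', i.e.\ excluding the degenerate possibility that $\steady$ is attractive while $Q$ is merely positive semidefinite with a flat direction. I would handle this using that \ref{pb discreto} is an $H^{-1}$-type gradient flow of $L$, so that attractive equilibria are strict local minima of $L$ on the level set, together with the standing hypothesis that the steady states are isolated in $\simplex^+(\ns{u_0})$ and Lemma \ref{lemma stable implies as}; the strict inequality in \ref{inequality} is exactly what this yields, and everything else reduces to the elementary Cauchy--Schwarz computation above.
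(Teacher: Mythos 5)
Your route is genuinely different from the paper's. The paper does not argue through the Lyapunov function here: it invokes Proposition 8 of \cite{sd}, which characterizes asymptotic $\ns{}$stability by the sign condition $(-1)^iX_i(\steady)>0$ for a determinant-like recursion $X_{i+1}(\steady)=-\phi'(\steady(i+1))X_i(\steady)+(-1)^{i+1}\prod_{j\le i}\phi'(\steady(j))$ attached to the linearization, and then solves that recursion explicitly for the one-spike configuration. Your Hessian-plus-Cauchy--Schwarz computation recovers exactly the same sharp intermediate bound, namely $|\phi'(\omega_2)|<\bigl(n_1/\phi'(\omega_1)+n_3/\phi'(\omega_3)\bigr)^{-1}=\phi'(\omega_1)\phi'(\omega_3)/\bigl(n_1\phi'(\omega_3)+(N-n_1)\phi'(\omega_1)\bigr)$, which is a good consistency check; it is arguably more transparent, and more robust in that it does not depend on the ordering of the grid points, so it would survive in higher spatial dimension, whereas the recursion for $X_i$ is tied to the tridiagonal structure of the one-dimensional Laplacian.

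The one genuine gap is the strictness, exactly where you locate it. Asymptotic $\ns{}$stability together with the gradient-flow structure gives that $\steady$ is a local minimum of $L$ on the mass hyperplane, hence only that $Q\ge 0$ on $T$, i.e.\ $|\phi'(\omega_2)|\le\bigl(\sum_{i\ne j_0}1/\phi'(\steady_i)\bigr)^{-1}$. Your proposed fix --- isolated equilibria plus the gradient structure --- does not exclude the boundary case: a smooth potential can have an isolated, asymptotically stable critical point whose Hessian is degenerate (think of $x\mapsto x^4$), so positive semidefiniteness with a nontrivial kernel is compatible with everything you have assumed. To get the strict inequality you would either have to import the exact linearization criterion of \cite{sd}, as the paper does, or examine the higher-order terms of $L$ along the kernel direction of $Q$. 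That said, the defect is harmless for every use of Lemma \ref{lemma bound} in the paper: since $N$ is infinite, the non-strict bound already forces $\phi'(\omega_2)\sim 0$, which is all that Propositions \ref{last stability} and \ref{spike} require.
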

\begin{proof}
	For a matter of commodity, suppose that
	\begin{equation*}
	\left\{
	\begin{array}{rcll}
	\steady_0 & = & \omega_2&\\
	\steady_i & = & \omega_1& \mathrm{for \ } i = 1, 2, \ldots, n_1\\
	\steady_i & = & \omega_3& \mathrm{otherwise.}\\
	\end{array}
	\right.
	\end{equation*}
	Let $X_1(\steady) = -(\phi'(\steady_0)+\phi'(\steady_1))$ and define by recursion
	$$
	X_{i+1}(\steady) = -\phi'(\steady_{i+1})X_i(\steady) + (-1)^{i+1}\prod_{j = 0}^i \phi'(\steady_j)
	$$
	It is a consequence of Proposition 8 of \cite{sd} that asymptotic $\ns{}$stability of $\steady$ is equivalent to $(-1)^i X_i(\steady) > 0$
	for $i = 1, \ldots, N$.
	Notice that, as long as $i \leq n_1$, $$X_i(\steady) = (-1)^i \phi'(\omega_1)^{i-1}(\phi'(\omega_1)+i\phi'(\omega_2)),$$
	so that $(-1)^i X_i(\steady) > 0$ is equivalent to
	$$
	|\phi'(\omega_2)| < \frac{\phi'(\omega_1)}{i}.
	$$
	Since the above inequality is true for all $i\leq n_1$, we obtain
	$|\phi'(\omega_2)| \leq \frac{\phi'(\omega_1)}{n_1}$.
	For $i = n_1 + 1, \ldots, N$, a similar computation shows that $(-1)^i X_i(\steady) > 0$ implies
	$$
		|\phi'(\omega_2)| < \frac{\phi'(\omega_1)\phi'(\omega_3)}{n_1\phi'(\omega_3)+(i-n_1)\phi'(\omega_1)};
	$$
	since this inequality is true for all $n_1 < i \leq N$, we deduce
	$$
	|\phi'(\omega_2)|
	\leq 
	\frac{\phi'(\omega_1)\phi'(\omega_3)}{n_1 \phi'(\omega_3)+(N-n_1)\phi'(\omega_1)}.
	$$
	From the inequality
	$$
	\frac{\phi'(\omega_1)\phi'(\omega_3)}{n_1 \phi'(\omega_3)+(N-n_1)\phi'(\omega_1)}
	\leq  \frac{\max\{\phi'(\omega_1), \phi'(\omega_3)\}^2}{N\min\{\phi'(\omega_1), \phi'(\omega_3)\}}
	$$
	we deduce that the desired result holds.
\end{proof}

\subsection{Asymptotic behaviour of the grid solutions under the hypothesis $u^+ < +\infty$}

We will now discuss the asymptotic behaviour of the grid solutions to problem \ref{mark} under the hypothesis that $u^+ < +\infty$.
Under this hypothesis, the steady states of the grid function formulation with $n_2 = 0$ are all asymptotically \stable.

\begin{proposition}\label{n_2 = 0}
	Let $\steady$ be a steady state of system \ref{pb discreto} with $n_2 = 0$.
	Then $\steady$ is asymptotically \stable.
\end{proposition}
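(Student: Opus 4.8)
The plan is to verify directly the criterion that characterizes asymptotically \stable\ steady states, already used in the proof of Lemma~\ref{lemma bound}, namely Proposition~8 of \cite{sd}: a non-homogeneous steady state $\steady$ of system~\ref{pb discreto} is asymptotically \stable\ if and only if $(-1)^iX_i(\steady)>0$ for $i=1,\ldots,N$, where $X_1(\steady)=-(\phi'(\steady(0))+\phi'(\steady(1)))$ and
$$
X_{i+1}(\steady)=-\phi'(\steady(i+1))\,X_i(\steady)+(-1)^{i+1}\prod_{j=0}^{i}\phi'(\steady(j)).
$$
If $\steady$ happens to be the homogeneous steady state, the claim is already contained in Proposition~\ref{poutpurri} together with Lemma~\ref{lemma stable implies as}: indeed $n_2=0$ forces the constant value of $\steady$ into $(0,u^-)$ or into $(u^+,+\infty)$, i.e.\ $\vsum{\ns{u_0}}<u^-$ or $\vsum{\ns{u_0}}>u^+$, which is exactly the case treated there. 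So from now on I assume $\steady$ non-homogeneous.

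The single structural input I would use is that $n_2=0$ forces every component of $\steady$ to equal $\omega_1$ or $\omega_3$, and Hypotheses~\ref{ipotesi di partenza} give $\phi'(\omega_1)>0$ and $\phi'(\omega_3)>0$. Hence, writing $a_j:=\phi'(\steady(j))$, we have $a_j>0$ for every $j=0,\ldots,N$, no matter how the values $\omega_1$ and $\omega_3$ are arranged along the grid --- this is the only point where the present situation differs from Lemma~\ref{lemma bound}, where a convenient ordering of the components was fixed.

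The remaining step is pure sign bookkeeping. Setting $Y_i:=(-1)^iX_i(\steady)$, the recursion above becomes $Y_1=a_0+a_1$ and
$$
Y_{i+1}=a_{i+1}\,Y_i+\prod_{j=0}^{i}a_j .
$$
Since $Y_1=a_0+a_1>0$ and every step adds to the positive multiple $a_{i+1}Y_i$ of $Y_i$ the positive quantity $\prod_{j=0}^{i}a_j$, a one-line induction yields $Y_i>0$ for all $i=1,\ldots,N$; that is, $(-1)^iX_i(\steady)>0$, which is exactly the criterion for $\steady$ to be asymptotically \stable. An alternative route that avoids this criterion would be to prove only $\ns{}$stability and then upgrade it via Lemma~\ref{lemma stable implies as}: the functional $L(u)=\sum_iV(u_i)$ with $V(s)=\int_0^s\phi(\tau)\,d\tau$ is a Lyapunov function for system~\ref{pb discreto} by Proposition~4 of \cite{discrete 2}, its gradient at $\steady$ is constant in $x$ and hence orthogonal to the mass constraint cutting out $\simplex^+(\ns{u_0})$, and its Hessian there is $\mathrm{diag}(\phi'(\steady(j)))$, positive definite precisely because $n_2=0$; thus $\steady$ is a strict constrained local minimum of $L$, hence \stable, and Lemma~\ref{lemma stable implies as} then gives that $\steady$ is asymptotically \stable.

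I do not expect a genuine obstacle: all the weight sits on correctly quoting Proposition~8 of \cite{sd} (or Proposition~4 of \cite{discrete 2} in the alternative route). The two points that deserve care are that the criterion must be invoked for an \emph{arbitrary} placement of $\omega_1$ and $\omega_3$ on the grid, and that the conserved mass mode and the Neumann endpoint terms require no separate discussion because they are already built into the statement of that criterion.
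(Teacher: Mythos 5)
Your proof is correct and rests on the same ingredient as the paper's, which simply states that the result ``is a consequence of Proposition 8 of \cite{sd}''; you invoke that same criterion and additionally carry out the sign verification ($a_j=\phi'(\steady(j))>0$ for all $j$ when $n_2=0$, hence $Y_{i+1}=a_{i+1}Y_i+\prod_{j\le i}a_j>0$ by induction) that the paper leaves implicit. The extra care about arbitrary placement of $\omega_1,\omega_3$ and the homogeneous case is sound but not something the paper's one-line proof makes explicit.
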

\begin{proof}
	It is a consequence of Proposition 8 of \cite{sd}.
\end{proof}

It turns out that, thanks to the hypotheses over $\phi$, all \stable\ non-homogeneous steady states of system \ref{pb discreto} for which $\omega_1 \not \sim u^-$ and $\omega_3 \not \sim u^+$ must have $n_2 = 0$, giving a partial converse to Proposition \ref{n_2 = 0}.

\begin{proposition}\label{last stability}
	If $\steady$ is an asymptotically \stable\ non-homogeneous steady state of \ref{pb discreto} with
	$\omega_1 \not \sim u^-$ and $\omega_3 \not \sim u^+$,
	then $n_2 = 0$.
\end{proposition}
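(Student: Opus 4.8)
The plan is to argue by contradiction, combining the bound of Lemma \ref{lemma bound} with the fact that $N=N_0!$ is an \emph{infinite} hypernatural. Suppose $\steady$ is an asymptotically \stable, non-homogeneous steady state of \ref{pb discreto} with $\omega_1 \not\sim u^-$ and $\omega_3 \not\sim u^+$, and suppose for contradiction that $n_2 \geq 1$. By Proposition \ref{stabilita 1}, every steady state of \ref{pb discreto} with $n_2 > 1$ is \unstable, hence not asymptotically \stable; therefore $n_2 = 1$. I first treat the case in which all three values $\omega_1,\omega_2,\omega_3$ actually occur, i.e.\ $n_1,n_3 \geq 1$.

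The first real step is to certify that $\phi'(\omega_1)$ and $\phi'(\omega_3)$ are appreciable, that is, finite and not infinitesimal. Put $c = \ns{\phi}(\omega_1) = \ns{\phi}(\omega_2) = \ns{\phi}(\omega_3)$. Since $\phi$ is a standard function satisfying Hypotheses \ref{ipotesi di partenza}, the branch inverses $S_1,S_2,S_3$ of $\phi$ are standard, continuous and strictly monotone on their branches, hence S-continuous, and so are $\phi$ and $\phi'$ (with $\phi'(u^-)=\phi'(u^+)=0$ by continuity). From $\omega_1 = S_1(c)$ together with $\omega_1 \not\sim u^-$ I get $c \not\sim \phi(u^-)$, and from $\omega_3 = S_3(c)$ together with $\omega_3 \not\sim u^+$ I get $c \not\sim \phi(u^+)$; since moreover $\phi(u^+) \leq c \leq \phi(u^-)$ with $\phi(u^+),\phi(u^-)$ standard, $c$ is finite and bounded away from both endpoints of $(\phi(u^+),\phi(u^-))$ by standard amounts. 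Applying $S_1$ and $S_3$ once more and using their monotonicity, $\omega_1$ lies in a standard compact subinterval of $(0,u^-)$ and $\omega_3$ in a standard compact subinterval of $(u^+,+\infty)$; on each of these $\phi'$ is continuous and strictly positive, hence bounded below by a standard positive constant and above by a standard one, so $\phi'(\omega_1),\phi'(\omega_3) \in \fin$ are appreciable. Consequently the right-hand side of inequality \ref{inequality}, namely $\max\{\phi'(\omega_1),\phi'(\omega_3)\}^2 / \bigl( N \min\{\phi'(\omega_1),\phi'(\omega_3)\}\bigr)$, is a ratio of appreciable quantities divided by the infinite $N$, hence infinitesimal; Lemma \ref{lemma bound} then forces $|\phi'(\omega_2)| \sim 0$.

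Now I would extract the contradiction. Since $\phi' < 0$ throughout $(u^-,u^+)$, $\phi'$ is continuous and $\phi'(u^-) = \phi'(u^+) = 0$, the relation $|\phi'(\omega_2)| \sim 0$ forces $\omega_2 \sim u^-$ or $\omega_2 \sim u^+$. If $\omega_2 \sim u^-$, then by S-continuity of $\phi$ we have $c = \ns{\phi}(\omega_2) \sim \phi(u^-)$, whence by S-continuity of $S_1$, $\omega_1 = S_1(c) \sim S_1(\phi(u^-)) = u^-$, contradicting $\omega_1 \not\sim u^-$; if $\omega_2 \sim u^+$, then $c \sim \phi(u^+)$ and $\omega_3 = S_3(c) \sim S_3(\phi(u^+)) = u^+$, contradicting $\omega_3 \not\sim u^+$. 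In either case we have a contradiction, so $n_2 = 0$ in this case.

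It remains to dispatch the configurations in which one of the outer values is absent ($n_1 = 0$ or $n_3 = 0$), where the corresponding hypothesis on $\omega_1$ or $\omega_3$ is vacuous. Here I would replace one of the two ``branch-away'' estimates above by mass conservation: by Theorem \ref{esistenza}, $\norm{\steady}_1 = \norm{\ns{u}_0}_1$, and since $n_2 = 1$ and $\norm{u_0}_1$ is a standard finite number, the surviving outer value is pinned infinitely close to $\norm{u_0}_1$; this makes its $\phi'$-value appreciable, the one-sided form of the bound read off from the proof of Lemma \ref{lemma bound} (with an empty complementary branch) still yields $|\phi'(\omega_2)| \sim 0$, and then $\omega_2 \sim u^-$ (resp.\ $\omega_2 \sim u^+$) is excluded as before. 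I expect this second step --- the nonstandard ``appreciable'' bookkeeping, i.e.\ converting $\omega_1 \not\sim u^-$ and $\omega_3 \not\sim u^+$ into genuine standard lower bounds on $|\phi'(\omega_1)|$ and $|\phi'(\omega_3)|$, together with the degenerate boundary behaviour of $\phi'$ near $0$, $u^-$, $u^+$ and the value-omitting configurations --- to be the only delicate part; once it is in place, the conclusion is immediate precisely because $N$ is infinite.
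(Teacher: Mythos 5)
Your proof is correct and follows essentially the same route as the paper's: reduce to $n_2=1$ via Proposition \ref{stabilita 1}, use the hypotheses $\omega_1\not\sim u^-$, $\omega_3\not\sim u^+$ to make $\min\{\ns{\phi}'(\omega_1),\ns{\phi}'(\omega_3)\}$ non-infinitesimal so that the right-hand side of inequality \ref{inequality} is infinitesimal, conclude $\ns{\phi}'(\omega_2)\sim 0$, and push the resulting $\omega_2\sim u^-$ or $\omega_2\sim u^+$ back through $\phi(\omega_1)=\phi(\omega_2)=\phi(\omega_3)$ to contradict the hypotheses. Your additional bookkeeping (the explicit ``appreciable'' estimates via the branch inverses $S_i$, and the degenerate configurations $n_1=0$ or $n_3=0$) only fills in details the paper leaves implicit.
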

\begin{proof}
	Suppose towards a contradiction that $n_2 = 1$.
	The hypotheses $\omega_1 \not \sim u^-$ and $\omega_3 \not \sim u^+$ imply $\min\{\ns{\phi}'(\omega_1), \ns{\phi}'(\omega_3)\} \not \sim 0$, otherwise either $\phi'(\sh{\omega_1})=0$ or $\phi'(\sh{\omega_3})=0$, against the hypotheses \ref{ipotesi di partenza}.
	As a consequence, $N\min\{\ns{\phi}'(\omega_1), \ns{\phi}'(\omega_3)\}$ is infinite.
	Thanks to inequality \ref{inequality}, we deduce that $|\ns{\phi}'(\omega_2)| \sim 0$.
	By the hypotheses over $\phi$, there exists $\omega_2 \in \ns{(u^-,u^+)}$ with $|\ns{\phi}'(\omega_2)| \sim 0$ if and only if $\omega_2 \sim u^-$ or $\omega_2 \sim u^+$.
	However, $\omega_2 \sim u^-$ implies $\omega_1 \sim u^-$ and $\omega_2 \sim u^+$ implies $\omega_3 \sim u^+$, in contradiction with the hypotheses $\omega_1 \not \sim u^-$ and $\omega_3 \not \sim u^+$.
\end{proof}

Putting together the results of this section, we can characterize the asymptotic behaviour of a grid solution of problem \ref{mark}.
In particular, for almost every initial data, the grid solution converges to a steady state that is a superposition of at most two Dirac measures centred at the stable branches of $\phi$.

\begin{proposition}
	Let $[u], [\ns{\phi(u)}]$ be the grid solution of problem \ref{mark} with initial data $\ns{u_0}$.
	For almost every $u_0 \in L^\infty(\Omega)$, $[u]$ converges to a steady state $\nu$ satisfying:
	\begin{enumerate}
		\item there exists $c \in \R$ such that $\int_{\R} \phi(\tau) d\nu(x) = c$ for all $x \in \Omega$;
		\item there exist $r_1 \in [0,u^-]$, $r_3 \in [u^+,+\infty)$, and $\lambda_1, \lambda_3 : \Omega \rightarrow [0,1]$, such that
		\begin{enumerate}
			\item $ \nu(x) = \lambda_1(x) \delta_{r_1}+\lambda_3(x) \delta_{r_3}$ for a.e.\ $x \in \Omega$;
			\item $ \phi(r_1) = \phi(r_3) = c$;
			\item $\lambda_1(x)+\lambda_3(x) = 1$ for a.e.\ $x \in \Omega$.
		\end{enumerate}
	\end{enumerate}
\end{proposition}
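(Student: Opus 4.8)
The plan is to combine the convergence statement of Proposition~\ref{limiti} with the classification of the asymptotically \stable\ steady states of \ref{pb discreto} obtained in Propositions~\ref{poutpurri}, \ref{stabilita 1}, \ref{n_2 = 0} and \ref{last stability}, together with a genericity argument that rules out convergence to \unstable\ equilibria. Under the standing hypothesis that the equilibria of \ref{pb discreto} are isolated in $\simplex^+(\ns{u_0})$ --- which by \cite{sd} holds for almost every initial datum --- Proposition~\ref{limiti} gives that $u(t)$ converges, as $t\to+\infty$, to a steady state $\steady\in\simplex^+(\ns{u_0})$ with $\ns{\phi}(\steady(x))$ constant and equal to some $c$. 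Since $\norm{\steady}_\infty\in\fin$ by Proposition~\ref{ex invariant set}, the grid function $\steady$ is nearstandard and $c\in\fin$, so, letting $\nu$ be the Young measure associated to $\steady$ via Theorem~\ref{parametrized measures}, condition (1) already holds with $\sh{c}$ in place of $c$.

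Next I would show that, for almost every initial datum, the limit $\steady$ is in fact asymptotically \stable. By the argument in the proof of Proposition~\ref{stabilita 1}, system~\ref{pb discreto} is a gradient-like flow for the Lyapunov function $L(u)=\sum_i V(u_i)$ with $V(u_i)=\int_0^{u_i}\phi(s)\,ds$, so every trajectory converges to an equilibrium; since the equilibria are isolated and $\simplex^+(\ns{u_0})$ is $\ns{}$compact, there are only finitely many of them, and (using the non-degeneracy that underlies the isolation hypothesis, as in \cite{stable patterns, sd}) the stable set of each \unstable\ equilibrium has positive codimension in $\simplex^+(\ns{u_0})$. Transferring this finite-dimensional fact, the set of initial data whose trajectory converges to an \unstable\ steady state is Lebesgue-null; off this set the trajectory converges to a \stable\ steady state, which by Lemma~\ref{lemma stable implies as} is asymptotically \stable.

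It then remains to read off the structure of an asymptotically \stable\ steady state $\steady$ and translate it into a statement about $\nu$. If $\steady$ is homogeneous, Proposition~\ref{poutpurri} forces $\vsum{\ns{u_0}}<u^-$ or $\vsum{\ns{u_0}}>u^+$, so $\steady$ is constant and $\nu(x)$ is a single Dirac mass, in $[0,u^-]$ (resp.\ in $[u^+,+\infty)$), for a.e.\ $x$. If $\steady$ is non-homogeneous, Proposition~\ref{stabilita 1} gives $n_2\le 1$, and Proposition~\ref{last stability} gives $n_2=0$ unless $\omega_1\sim u^-$ or $\omega_3\sim u^+$; in every case $\steady$ takes, off a null set of grid points, exactly the two values $\omega_1,\omega_3$ with $\sh{\omega_1}\in[0,u^-]$, $\sh{\omega_3}\in[u^+,+\infty)$ and $\phi(\omega_1)=\phi(\omega_3)=c$. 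Defining $\lambda_1(x),\lambda_3(x)$ to be the weights of $\omega_1,\omega_3$ in $\nu_x$, one obtains $\nu(x)=\lambda_1(x)\delta_{\omega_1}+\lambda_3(x)\delta_{\omega_3}$ with $\lambda_1(x)+\lambda_3(x)=1$ and $\int_\R\phi\,d\nu(x)=c$ for a.e.\ $x$, which is exactly condition (2).

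The main obstacle I anticipate is the genericity step: making precise, within the nonstandard framework, that ``almost every'' initial datum avoids the stable sets of the \unstable\ equilibria. This rests on \ref{pb discreto} being genuinely gradient-like --- so that every $\omega$-limit set is a single equilibrium --- and on \unstable\ equilibria of such a flow with isolated (non-degenerate) critical points having stable sets of positive codimension; both facts follow from the Lyapunov structure exploited in \cite{discrete 2, sd}, but must be invoked and transferred with care. A secondary subtlety is that, when $n_2=1$, a single grid site carries the spinodal value $\omega_2$; this contributes nothing to $\nu$ for a.e.\ $x$ because one grid point has infinitesimal weight in every monad, but this point should be stated explicitly.
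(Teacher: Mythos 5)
Your proposal is correct and takes essentially the same route the paper intends: the paper states this proposition with no written proof, presenting it as the direct assembly of Proposition~\ref{limiti} with the classification of \stable\ steady states in Propositions~\ref{poutpurri}, \ref{stabilita 1}, \ref{n_2 = 0} and \ref{last stability}, which is exactly the combination you carry out. Your explicit treatment of the genericity step (avoiding the stable sets of \unstable\ equilibria under the isolation hypothesis) and your remark that a single spinodal grid site has infinitesimal weight in every monad supply details the paper leaves implicit.
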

\begin{proof}
	Notice that, since $\phi(u_0(x)) \in \R$ for all $x \in \Omega$,
	$\lap\ns{\phi}(u_0(x)) \sim 0$ for all $x \in \dom$ if and only if 
	$\lap\ns{\phi}(u_0(x)) = 0$ for all $x \in \dom$.
	This condition is verified if and only if there are $r_1 \in [0,u^-]$, $r_2 \in (u^-,u^+)$ and $r_3 \in [u^+,+\infty)$ satisfying  $\phi(r_1)=\phi(r_2)=\phi(r_3)$ such that for all $x \in \Omega$ there exists $i \leq 3$ for which $u_0(x)=r_i$.
	However, the set of such initial data is a null set in $L^\infty(\Omega)$.

	If $\phi(\ns{u_0}(x))$ is not constant, Proposition \ref{limiti} ensures that a solution of system \ref{pb discreto} with initial data $\ns{u_0}$ will converge towards an asymptotically $\ns $stable steady state $\steady$ of \ref{pb discreto}.
	Thanks to Lemma \ref{lemma grafi}, for each of these steady states there exists $\xi \in \hR$ such that $\ns{\phi}(\steady(x)) = \xi$ for all $x \in \dom$; moreover, $\xi \in \fin$ by Proposition \ref{ex invariant set}.
	As a consequence, equality (1) is readily obtained from Theorem \ref{parametrized measures} by taking $c=\sh{\xi}$.
	
	Thanks to Proposition \ref{stabilita 1}, $\ns $stability of the steady state $\steady$ implies that $0 \leq n_2 \leq 1$, i.e.\ that $[\phi(\steady)]\not \in (u^-,u^+)$.
	Taking into account Theorem \ref{mina}, we obtain (2a) and (2b) with $r_1 = \sh{\omega_1}$ and $r_3=\sh{\omega_3}$.
	Property (2c) is a consequence of conservation of mass settled in Theorem \ref{esistenza}.
\end{proof}

\subsection{Asymptotic behaviour of the grid solutions under the hypothesis $u^+ = +\infty$}\label{asymptotic +infty}

If $u^+ = +\infty$, the bound of Lemma \ref{lemma bound} becomes
\begin{equation}\label{inequality2}
|\phi'(\omega_2)|  < \frac{\phi'(\omega_1)}{N}.
\end{equation}
From this inequality we will deduce that a necessary condition for the asymptotic $\ns{}$stability of a non-homogeneous steady state $p$ is that $\phi'(\omega_2) \sim 0$, and this is possible only when $\omega_2$ is infinite.

\begin{proposition}\label{spike}
	Suppose that $u^+ = +\infty$ and that $\steady$ is an asymptotically \stable\ non-homogeneous steady state of \ref{pb discreto}.
	Then $\omega_2$ is infinite.
\end{proposition}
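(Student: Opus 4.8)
The plan is to combine the restriction on the configuration of an asymptotically \stable\ steady state coming from Proposition~\ref{stabilita 1}, the spectral bound of Lemma~\ref{lemma bound} (whose inequality \ref{inequality} reduces to \ref{inequality2} when $u^+=+\infty$), the mass constraint \ref{numero soluzioni}, and $S$-continuity of $\phi$ and $\phi'$.

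First I would fix the shape of $\steady$. By Proposition~\ref{stabilita 1} an asymptotically \stable\ steady state has $n_2\le 1$; since $u^+=+\infty$ the only values $\steady$ may assume are $\omega_1\in\ns{(0,u^-)}$ and $\omega_2\in\ns{(u^-,+\infty)}$ (as $\ns{\phi}$ is injective on each of these), so a non-homogeneous \stable\ steady state must contain both, i.e.\ $n_2=1$ and $n_1=N$ (a single spike), and in particular $\omega_2$ genuinely occurs. Lemma~\ref{lemma bound}, in the form \ref{inequality2}, then gives $|\ns{\phi}'(\omega_2)|<\ns{\phi}'(\omega_1)/N$. Because $0<\omega_1<u^-$ is finite and $\phi'\in C^0(\R)$, the value $\ns{\phi}'(\omega_1)$ is finite; as $N$ is infinite, $\ns{\phi}'(\omega_1)/N\approx 0$, and therefore $\ns{\phi}'(\omega_2)\approx 0$.

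Next I would argue by contradiction, assuming $\omega_2$ finite. Then $\sh{\omega_2}\ge u^-$ exists, and $S$-continuity of $\phi'$ gives $\phi'(\sh{\omega_2})=\sh{(\ns{\phi}'(\omega_2))}=0$. By Hypotheses~\ref{ipotesi di partenza}, $\phi'<0$ on $(u^-,+\infty)$, while continuity of $\phi'$ together with its sign change at $u^-$ forces $\phi'(u^-)=0$; hence $u^-$ is the only zero of $\phi'$ in $[u^-,+\infty)$, so $\sh{\omega_2}=u^-$, i.e.\ $\omega_2\approx u^-$. Consequently $c:=\ns{\phi}(\omega_2)\approx\phi(u^-)$, and since $\steady$ is a steady state $\ns{\phi}(\omega_1)=c\approx\phi(u^-)$ as well; as $\phi$ is strictly increasing on $[0,u^-]$ with $\phi(v)<\phi(u^-)$ for $v<u^-$, and $\omega_1<u^-$, this forces $\omega_1\approx u^-$. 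Thus every component of $\steady$ is $\approx u^-$, so $\vsum{\ns{u_0}}=\norm{\steady}_1\approx u^-$ (recall $\steady\in\simplex^+(\ns{u_0})$), whence $\sh{(\vsum{\ns{u_0}})}=u^-$. But this threshold value of the total mass lies outside the regime considered in this section, Proposition~\ref{poutpurri} treating only $\vsum{\ns{u_0}}<u^-$ and $\vsum{\ns{u_0}}>u^+$; this contradiction shows that $\omega_2$ must be infinite.

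I expect the delicate point to be exactly this last step, the exclusion of $\omega_2\approx u^-$. If one does not want to discard the threshold mass as atypical, one must show directly that $\omega_2\approx u^-$ — which via the mass constraint \ref{numero soluzioni} also forces $\omega_1\approx u^-$ — is incompatible with \ref{inequality2}, and this amounts to a quantitative comparison of $\ns{\phi}'(\omega_1)$ and $|\ns{\phi}'(\omega_2)|$ for $\omega_1,\omega_2$ near $u^-$. Such a comparison is immediate when $\phi''(u^-)\neq 0$ but genuinely delicate under the bare $C^1$ hypothesis on $\phi$; a fully rigorous treatment would either appeal to the standing assumptions on $u_0$ used throughout this section or add a mild nondegeneracy condition on $\phi$ at $u^-$.
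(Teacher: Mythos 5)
Your core argument is exactly the paper's: use Proposition \ref{stabilita 1} to reduce to $n_2=1$, apply inequality \ref{inequality2} with $\omega_1$ finite and $N$ infinite to get $\ns{\phi}'(\omega_2)\sim 0$, and invoke the sign condition $\phi'<0$ on $(u^-,+\infty)$ from Hypotheses \ref{ipotesi di partenza} to conclude that $\omega_2$ is infinite. Where you diverge is in flagging the loophole $\omega_2$ finite with $\omega_2\sim u^-$, for which $\ns{\phi}'(\omega_2)\sim 0$ also holds; the paper's proof passes over this case in silence, even though in the companion statement for $u^+<+\infty$ (Proposition \ref{last stability}) it explicitly adds the hypotheses $\omega_1\not\sim u^-$ and $\omega_3\not\sim u^+$ precisely to exclude such degeneracies. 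So your worry is legitimate and points at a real gap in the paper's own argument. However, your proposed patch does not close it: deducing $\vsum{\ns{u_0}}\sim u^-$ and declaring that this mean value lies outside the regime of the section is not a contradiction, since the hypotheses of Proposition \ref{spike} place no restriction on the mean of $u_0$, and Proposition \ref{poutpurri} is a list of sufficient conditions rather than an exhaustive trichotomy. As you correctly observe in your closing paragraph, genuinely excluding $\omega_2\sim u^-$ requires comparing the two infinitesimals $|\ns{\phi}'(\omega_2)|$ and $\ns{\phi}'(\omega_1)/N$ under the constraint $\ns{\phi}(\omega_1)=\ns{\phi}(\omega_2)$, and under the bare $C^1$ hypothesis this comparison can go the wrong way (for instance if $\phi'$ vanishes at $u^-$ to much higher order from the right than from the left, inequality \ref{inequality2} is compatible with a finite $\omega_2\sim u^-$). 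In short: on the main line your write-up matches the paper; the residual case is a defect of the published proof rather than of yours, but your attempted repair, as stated, does not actually repair it — an added nondegeneracy hypothesis at $u^-$, parallel to the one in Proposition \ref{last stability}, is the honest fix.
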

\begin{proof}
	Since the steady state is non-homogeneous, $\omega_2 > u^-$.
	By inequality \ref{inequality2}, and since $\phi \in C^1(\R)$, it must hold $\ns{\phi}'(\omega_2) \sim 0$.
	Since hypotheses \ref{ipotesi di partenza} entails the inequality $\ns{\phi}'(x)<0$ for all $x>u^-$, the condition $\ns{\phi}'(\omega_2) \sim 0$ is satisfied only if $\omega_2$ is infinite, as desired.
\end{proof}

This result together with Proposition \ref{stabilita 1} implies that any non-homogeneous asymptotically \stable \ steady states of system \ref{pb discreto} in the case where $u^+ = +\infty$ are piecewise constant with a single spike.
Proposition \ref{spike} implies that an infinite amount of the mass is concentrated in the spike.
This result is in accord with both the theoretical results and the numerical experiments of \cite{painter2, reinforcement, sd, sobolev padron, discrete 2}.
However, as we observed previously, we do not expect that $[u]$ converges to a steady state which satisfies Proposition \ref{spike}.
Proposition \ref{spike} should be interpreted as a confirmation of a conjecture by Smarazzo that, for a grid solution $[u]$ of problem \ref{mark}, the regular part of the solution eventually vanishes, and the singular part of the solution prevails.
More precisely, we obtain the following result.

\begin{proposition}\label{proposition conjecture smarrazzo}
	Let $[u], [\ns{{\phi(u)}}]$ be the grid solution of problem \ref{mark} with initial data $\ns{u_0}$.
	For almost every $\ns{u_0} \in L^\infty(\Omega)$, $[u]$ converges to a steady state $\nu+\mu$ satisfying:
	\begin{enumerate}
		\item $\nu$ is a homogeneous Dirac Young measure centred at $0$, i.e.\ $\nu \in L^\infty(\Omega)$ and $\nu(x) = 0$ a.e.;
		\item $\mu= \norm{\ns{u_0}}_1\tilde{\mu}$, and $\tilde{\mu}$ is a probability measure over $\Omega$.
	\end{enumerate}
	In particular, for almost every initial data $\ns{u_0}$, $[u]$ converges to a steady state with null regular part.
\end{proposition}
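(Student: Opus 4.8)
The plan is to read off the limiting grid steady state from Proposition \ref{limiti}, to determine its shape from Propositions \ref{stabilita 1} and \ref{spike}, and then to translate that grid function into a Young measure and a Radon measure by means of Theorem \ref{parametrized measures}, Corollary \ref{corollario baricentro} and the mass conservation of Theorem \ref{esistenza}. First I would invoke Corollary \ref{global ex} and Proposition \ref{limiti}: $u(t)$ stays in $\simplex^+(\ns{u_0})$ and converges, as $t\to+\infty$, to a steady state $\steady$ of \ref{pb discreto}. Since the steady states are isolated in $\simplex^+(\ns{u_0})$, a standard LaSalle-type argument built on the Lyapunov function $L(u)=\sum_i\int_0^{u_i}\phi(s)\,ds$ of the proof of Proposition \ref{stabilita 1} (the stable manifolds of the non-minimal equilibria being lower-dimensional; see \cite{sd}) shows that $\steady$ is asymptotically \stable\ for every $\ns{u_0}$ outside a Lebesgue-null set. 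In the regime $\vsum{\ns{u_0}}>u^-$, where the homogeneous steady state $\eq$ is \unstable\ by Proposition \ref{poutpurri}, the limit $\steady$ is non-homogeneous; Proposition \ref{stabilita 1} then gives $n_2\le 1$, and since a non-homogeneous steady state cannot have $n_2=0$ we get $n_2=1$. Hence $\steady$ takes the value $\omega_2$ at exactly one grid point $x_0\in\dom$ and the value $\omega_1$ at every other point, and by Proposition \ref{spike} the value $\omega_2$ is infinite.

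Next I would pin down $\omega_1$ and compute the parametrized measure $\nu$ associated to $\steady$ by Theorem \ref{parametrized measures}. From $\ns{\phi}(\omega_1)=\ns{\phi}(\omega_2)=c$, the infiniteness of $\omega_2$ and the hypothesis $u^+=+\infty$ together with the last clause of Hypotheses \ref{ipotesi di partenza} give $c=\ns{\phi}(\omega_2)\sim 0$, so $\sh{\omega_1}\in[0,u^-]$ is a zero of $\phi$, forcing $\sh{\omega_1}=0$, i.e.\ $\omega_1\sim 0$. For $g\in\bcf(\R)$ and $\varphi\in C^0_c(\Omega)$ I would split
\[
\langle\ns{g}(\steady),\ns{\varphi}\rangle=\varepsilon^k\sum_{x\in\dom\setminus\{x_0\}}\ns{g}(\omega_1)\,\ns{\varphi}(x)\ +\ \varepsilon^k\,\ns{g}(\omega_2)\,\ns{\varphi}(x_0),
\]
where $S$-continuity of $\ns{g}$ and $\omega_1\sim 0$ make the first sum infinitely close to $g(0)\,\varepsilon^k\sum_{x\in\dom}\ns{\varphi}(x)\sim g(0)\int_\Omega\varphi\,dx$, while the second term is infinitesimal because $g$ vanishes at infinity (hence $\ns{g}(\omega_2)\sim 0$) and $\varepsilon^k\ns{\varphi}(x_0)\sim 0$. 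Thus $\int_\R g\,d\nu_x=g(0)$ for a.e.\ $x$, i.e.\ $\nu_x=\delta_0$ a.e.; this is assertion (1), and it shows that the barycentre $u_r(x)=\int_\R\tau\,d\nu_x$ of Corollary \ref{corollario baricentro} vanishes a.e., which is the ``null regular part'' conclusion.

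It then remains to compute the singular part $\mu=[\steady]-u_r=[\steady]$. For $\varphi\in C^1(\overline{\Omega})$, the pairing $\langle[\steady],\varphi\rangle$ is the shadow of $\varepsilon^k\omega_1\sum_{x\ne x_0}\ns{\varphi}(x)+\varepsilon^k\omega_2\,\ns{\varphi}(x_0)$; the first term is infinitely close to $\omega_1\int_\Omega\varphi\,dx$, hence $0$ since $\omega_1\sim 0$, while the second equals $m\,\varphi(\sh{x_0})$ with $m:=\sh{(\varepsilon^k\omega_2)}$. Conservation of mass (Theorem \ref{esistenza}) together with $\omega_1\sim 0$ gives $\vsum{\ns{u_0}}=\norm{\steady}_1=\varepsilon^k\omega_1\,|\dom\setminus\{x_0\}|+\varepsilon^k\omega_2\sim\varepsilon^k\omega_2$, so $m=\vsum{\ns{u_0}}$. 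Therefore $\mu=\vsum{\ns{u_0}}\,\delta_{\sh{x_0}}$, and taking $\tilde\mu:=\delta_{\sh{x_0}}$ — a probability measure whose support $\{\sh{x_0}\}$ is $\leb$-null — proves assertion (2); the weak-$\star$ convergence $[u(t)]\to\nu+\mu$ follows from $\norm{u(t)-\steady}_\infty\to 0$ and the estimate $|\langle u(t)-\steady,\ns{\varphi}\rangle|\le\norm{u(t)-\steady}_\infty\,\norm{\ns{\varphi}}_1$.

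The crux I anticipate is the twofold behaviour of the single, infinitesimally thin spike at $x_0$: it must be invisible to the parametrized-measure pairing — because the admissible test functions $g\in\bcf(\R)$ vanish at infinity and a single grid point is a negligible fraction of every monad — and yet it must persist, as an honest Dirac mass, in the measure limit $[\steady]$, which amounts to testing against the unbounded function $\tau\mapsto\tau$; conservation of mass along the flow is exactly what forces the weight of that Dirac mass to equal $\vsum{\ns{u_0}}$. The genericity (``for almost every $\ns{u_0}$'') is the softer ingredient, imported from the finite-dimensional dynamical-systems analysis of \cite{sd} through the Lyapunov function above.
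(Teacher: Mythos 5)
Your argument is correct and is precisely the one the paper intends: the paper states this proposition without a written proof, presenting it as a direct consequence of Proposition \ref{limiti} (convergence to a steady state), Propositions \ref{stabilita 1} and \ref{spike} (a \stable\ non-homogeneous steady state is a single spike of infinite height over an $\omega_1\sim 0$ background), and conservation of mass from Theorem \ref{esistenza}, which is exactly the chain you assemble and then push through Theorem \ref{parametrized measures} to read off $\nu=\delta_0$ and $\mu=\vsum{\ns{u_0}}\,\tilde\mu$. The only caveat, shared with the paper's own statement, is that the conclusion can only hold in the regime $\vsum{\ns{u_0}}>u^-$ where the homogeneous equilibrium is \unstable; you correctly make that restriction explicit.
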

\begin{proof}
	Notice that, since $\phi(u_0(x)) \in \R$ for all $x \in \Omega$,
	$\lap\ns{\phi}(u_0(x)) \sim 0$ for all $x \in \dom$ if and only if 
	$\lap\ns{\phi}(u_0(x)) = 0$ for all $x \in \dom$.
	This condition is verified if and only if there are $r_1 \in [0,u^-]$ and $r_2 \in (u^-,+\infty)$ satisfying  $\phi(r_1)=\phi(r_2)$ such that for all $x \in \Omega$ there exists $i \leq 2$ for which $u_0(x)=r_i$.
	However, the set of such initial data is a null set in $L^\infty(\Omega)$.
	
	If $\phi(\ns{u_0}(x))$ is not constant, Proposition \ref{limiti} ensures that a solution of system \ref{pb discreto} with initial data $\ns{u_0}$ will converge towards an asymptotically $\ns $stable steady state $\steady$ of \ref{pb discreto}.
	Thanks to Proposition \ref{stabilita 1} and to Proposition \ref{spike}, $\ns $stability of the steady state $\steady$ implies that $n_2 = 1$, that $\omega_2$ is infinite and, as a consequence of the equality $\ns{\phi}(\steady(\omega_1)) = \ns{\phi}(\steady(\omega_2))$ and of the hypotheses over $\phi$, that $\omega_1 \sim 0$.
	Taking into account Theorem \ref{equivalenza smarazzo} and Proposition \ref{equivalenza smarazzo 2}, we obtain the desired assertions.
\end{proof}

\section{The Riemann problem}\label{sezione riemann}

In the study of problems \ref{mark} and \ref{mark II} in the case when $u^+ < +\infty$, the dynamics of solutions with Riemann initial data are of particular interest both in the theoretical and in the numerical setting (see for instance \cite{riemann, lamascia}).
We will discuss the Riemann problem where the initial data $\ns{u_0}$ satisfies
\begin{equation}\label{riemann data}
\ns{u_0}(i\varepsilon) = 	\left\{
\begin{array}{ll}
p_l \in [0, u^-] &\mathrm{for\ } 0 \leq i \leq n\\
p_r \in [u^+, +\infty) &\mathrm{for\ } n+1 \leq i \leq N
\end{array} 
\right.
\end{equation}
for some $n \leq N$.
In order to understand the evolution of system \ref{pb discreto} with initial data \ref{riemann data}, we need to focus on the behaviour of the solution near the discontinuity in the data.
In particular, we will discuss the conditions at which $u_i(t) \in (0, u^-]$, $u_{i+1} \in [u^+, +\infty)$ and either $u_i(t+\tau) \in (u^-,u^+)$ or $u_{i+1}(t+\tau) \in (u^-,u^+)$ for some small $\tau > 0$.
If $u_i(t) \in (0, u^-]$ and $u_i(t+\tau) \in (u^-,u^+)$ we will say that there is an upward phase transition at $u_i(t)$; if $u_{i+1}(t) \in [u^+, +\infty)$ and $u_{i+1}(t+\tau) \in (u^-,u^+)$ we will say that there is a downward phase transition at $u_{i+1}(t)$.

\begin{proposition}\label{riemann transitions}
	Let $u$ be a solution of system \ref{pb discreto} with initial data \ref{riemann data}.
	Then an upward phase transition occurs at $u_i(t)$ for some $t > 0$ and for some $0 \leq i \leq N$ iff $u_i(t) = u^-$,
	\begin{equation}\label{inuguaglianza su}
	\ns{\phi}(u_{i-1}(t)) + \ns{\phi}(u_{i+1}(t)) > 2\ns{\phi}(u^-)
	\end{equation}
	and
	\begin{equation}\label{maximality}
	i = \max_{j \in\ns{\N},\ j \leq N}\{j : u_m(t) \leq u^- \mathrm{\ for\ all\ } m \leq j \}.
	\end{equation}
	A downward phase transition occurs at time $t$ at some $0 \leq i \leq N$ iff $u_i(t) = u^+$,
	\begin{equation}\label{inuguaglianza giu}
	\ns{\phi}(u_{i-1}(t)) + \ns{\phi}(u_{i+1}(t)) < 2\ns{\phi}(u^+)
	\end{equation}
	and
	\begin{equation}\label{minimality}
	i = \max_{j \in\ns{\N},\ j \leq N}\{j : u_m(t) \geq u^+ \mathrm{\ for\ all\ } m \leq j \}.
	\end{equation}
\end{proposition}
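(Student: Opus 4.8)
The plan is to analyze the time evolution of each component $u_i(t)$ near the point where the initial data jumps, using the finite-difference equation $u_t = \lap\ns{\phi}(u)$ componentwise. The key observation is that for $k=1$ the equation reads
$$
\frac{d}{dt}u_i(t) = \D_i^+\D_i^-\ns{\phi}(u(t)) = \frac{\ns{\phi}(u_{i+1}(t)) - 2\ns{\phi}(u_i(t)) + \ns{\phi}(u_{i-1}(t))}{\varepsilon^2},
$$
so the sign of $u_t$ at a grid point is governed entirely by the discrete second difference of $\ns{\phi}(u)$ at that point. I would first establish a monotonicity/ordering lemma: with Riemann initial data \ref{riemann data}, the solution stays ordered in the sense that the set $\{m : u_m(t)\leq u^-\}$ is an initial segment $\{0,1,\ldots,j(t)\}$ and the complementary set consists of points with $u_m(t)\geq u^+$, at least up until the first phase transition. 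This is what makes condition \ref{maximality} (resp.\ \ref{minimality}) natural: a phase transition can only happen at the "interface" index, because only there can a neighbour lie on the opposite stable branch while the point itself sits at the threshold $u^-$ (resp.\ $u^+$).

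Next I would prove the "only if" direction. Suppose an upward phase transition occurs at $u_i(t)$, meaning $u_i(t)\in(0,u^-)$ and $u_i(t+\tau)\in(u^-,u^+)$ for small $\tau>0$. By continuity of $u_i(\cdot)$ there is a first time $t_0\in(t,t+\tau]$ with $u_i(t_0)=u^-$; replacing $t$ by $t_0$ we may assume $u_i(t)=u^-$. For $u_i$ to cross upward through $u^-$ we need $\frac{d}{dt}u_i(t)\geq 0$, and in fact $>0$ on a neighbourhood to actually enter $(u^-,u^+)$; plugging $u_i(t)=u^-$ into the equation above gives exactly $\ns{\phi}(u_{i-1}(t)) + \ns{\phi}(u_{i+1}(t)) > 2\ns{\phi}(u^-)$, which is \ref{inuguaglianza su}. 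The maximality condition \ref{maximality} follows from the ordering lemma together with the fact that, by Hypotheses \ref{ipotesi di partenza} and the mass constraint, the interface index is well defined; one checks that a point strictly inside the left segment cannot reach $u^-$ before the interface point does, using that $\ns{\phi}$ is increasing on $(0,u^-)$ so the left block behaves like a discrete heat equation there and cannot overshoot $u^-$ ahead of the boundary point. The downward case is symmetric, using that $\ns{\phi}$ is increasing on $(u^+,+\infty)$ and reversing inequalities (note the sign flip: crossing $u^+$ downward needs $\frac{d}{dt}u_i(t)\leq 0$, yielding \ref{inuguaglianza giu}).

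For the "if" direction, assume $u_i(t)=u^-$, inequality \ref{inuguaglianza su} holds, and the maximality condition \ref{maximality} holds. Then the computation above gives $\frac{d}{dt}u_i(t)>0$, so $u_i$ strictly increases past $u^-$ into the spinodal interval $(u^-,u^+)$, i.e.\ an upward phase transition occurs; one must also check, using \ref{maximality} and the ordering lemma, that no other index transitions simultaneously in a way that would invalidate the local picture, but this is immediate since the hypotheses pin down $i$ as the unique interface. The downward direction is again symmetric.

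The main obstacle I expect is the ordering/monotonicity lemma — specifically, showing that the structure of the data (left block $\leq u^-$, right block $\geq u^+$) is preserved by the flow up to the first transition, and that the interface index is the first to reach the threshold. This requires a careful discrete maximum-principle argument on each block separately (where $\ns{\phi}$ is monotone, so $\lap\ns{\phi}$ behaves like a genuine discrete Laplacian after composing with the increasing branch), handled via the invariant-set technique already used in the proof of Proposition \ref{ex invariant set}: one looks at $\bar t = \sup\{t : \text{ordering holds}\}$ and shows the ordering cannot first fail at an interior point of either block. Once that lemma is in place, the rest is the elementary sign analysis of $\frac{d}{dt}u_i$ sketched above.
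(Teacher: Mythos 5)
Your proposal is correct and follows essentially the same route as the paper: the central step in both is evaluating $\varepsilon^2 u_i'(t) = \ns{\phi}(u_{i-1}(t)) - 2\ns{\phi}(u_i(t)) + \ns{\phi}(u_{i+1}(t))$ at the threshold $u_i(t)=u^-$ (resp.\ $u^+$) to read off the sign condition. The only difference is cosmetic: where you propose a full ordering lemma via a discrete maximum principle to justify \ref{maximality} and \ref{minimality}, the paper gets the same conclusion from the quicker observation that the two inequalities already rule out a transition at any index whose neighbours lie in the same stable phase, so with Riemann data only the interface index can transition.
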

\begin{proof}
	Suppose that $u_i(t) = u^-$ for some $t \in \ns{\R}_+$ and for some $i \leq N$.
	Then there is a phase transition iff
	$$
	\varepsilon^2 u'_i(t) 
	= 
	\ns{\phi}(u_{i-1}(t)) - 2\ns{\phi}(u_i(t))+ \ns{\phi}(u_{i+1}(t)) > 0
	$$
	from which \ref{inuguaglianza su} follows.
	Inequality \ref{inuguaglianza giu} can be proved in a similar way.
	Notice that the two inequalities imply that if at time $t$ $u_{i}(t)$, $u_{i+1}(t)$ and $u_{i-1}(t)$ are in the same stable phase, then $u_i$ cannot have a transition at time $t$.
	This is sufficient to entail \ref{maximality} and \ref{minimality} for Riemann initial data.
\end{proof}

\begin{proposition}\label{interfaccia sottile}
	Let $u$ be a solution of system \ref{pb discreto} with initial data \ref{riemann data}.
	For every $t \in \ns{\R}_+$, there exists at most one $i \leq N$ such that $u_i(t) \in (u^-, u^+)$.
\end{proposition}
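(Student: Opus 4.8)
The plan is to track the finitely many grid points that lie in the spinodal interval $(u^-,u^+)$ over time, and to show that a second such point can never be created while a first one exists. By Proposition \ref{riemann transitions}, a point $u_i$ enters the spinodal interval only through a phase transition --- an upward transition when $u_i(t)=u^-$ and the discrete Laplacian of $\ns{\phi}(u)$ at $i$ is strictly positive, or a downward transition when $u_i(t)=u^+$ and that Laplacian is strictly negative --- and in each case the maximality/minimality conditions \ref{maximality} and \ref{minimality} pin down the index at which this can happen. So first I would observe that, at $t=0$, the Riemann data \ref{riemann data} has \emph{no} point in $(u^-,u^+)$, and that the set of times at which some $u_i(t)\in(u^-,u^+)$ is open by continuity of the solution (Corollary \ref{global ex}).

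Next I would argue by a continuity/first-time argument. Suppose for contradiction that at some time there are two indices $i<j$ with $u_i(t),u_j(t)\in(u^-,u^+)$. Let $\t$ be the infimum of times at which two such indices coexist; by openness of the ``one spinodal point'' condition and continuity, at $\t$ exactly one of the two is on the boundary of the spinodal interval (equal to $u^-$ or $u^+$), having just been created, while the other is already strictly inside. The key point is the structure of the configuration \emph{just before} $\t$: by Proposition \ref{riemann transitions}, the index $i$ where the existing spinodal value sits is, by \ref{maximality} or \ref{minimality}, the unique boundary between an initial run of values $\leq u^-$ (resp.\ $\geq u^+$) and the rest. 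Once one point has transitioned, the monotone ``block'' structure of the Riemann data is broken only at that single interface, so the newly created spinodal point would have to arise at an index adjacent to --- in fact equal to --- the same interface $i$, contradicting $i\neq j$. Concretely: for an upward transition to occur at $j$ we need $u_m(t)\le u^-$ for all $m\le j$, but the existing spinodal point $u_i>u^-$ with $i<j$ violates this; symmetrically for a downward transition. Hence no second transition can fire, and no second spinodal point is created.

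The main obstacle I anticipate is handling the interplay of upward and downward transitions simultaneously, and making the ``first time'' argument rigorous in the nonstandard/internal setting --- one must ensure $\t$ is well-defined (the relevant set is internal, being defined by an internal formula over the internal solution $u\in\ns{C^1}$, so $\ns{}$infimum applies) and that at $\t$ the solution is genuinely in the described transitional configuration rather than in some degenerate coincidence where several coordinates hit $u^\pm$ at once. I would dispose of the degenerate case by noting that \ref{inuguaglianza su} and \ref{inuguaglianza giu} are \emph{strict} inequalities, so a coordinate sitting exactly at $u^-$ or $u^+$ with all neighbours in the same stable phase has zero time-derivative of $\ns{\phi}(u_i)$ only to first order and in fact cannot cross --- this is precisely the observation already recorded at the end of the proof of Proposition \ref{riemann transitions}. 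Combining this with the maximality constraints \ref{maximality}--\ref{minimality} forces the index of any new transition to coincide with the index of the existing spinodal point, which is the desired contradiction.
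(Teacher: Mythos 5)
Your overall strategy --- no spinodal points at $t=0$, plus a first-time argument showing that Proposition \ref{riemann transitions} forbids the creation of a second spinodal point while one exists --- is the same strategy the paper uses, and the first-time formalization you add is a reasonable way to make the induction over transitions explicit. However, your concrete case analysis covers only half of the relevant configurations, and the half you omit is precisely the half that cannot be dispatched by the maximality/minimality conditions alone.

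Your argument rules out a new upward transition at $j$ when the existing spinodal point sits at some $i<j$ (condition \ref{maximality} fails because $u_i>u^-$), and symmetrically a new downward transition at $j$ when the spinodal point sits at some $i>j$. It does not address the opposite-side cases: an upward transition proposed at $j=i-1$ when the spinodal point is at $i$, or a downward transition proposed at $j=i+1$. In the first of these, \ref{maximality} is perfectly consistent with $u_i\in(u^-,u^+)$, since that condition only constrains the indices $m\le j=i-1$; what kills the case is inequality \ref{inuguaglianza su}, which would require $\ns{\phi}(u_{i-2})+\ns{\phi}(u_i)>2\ns{\phi}(u^-)$. Since $u_i\in(u^-,u^+)$ forces $\ns{\phi}(u_i)<\ns{\phi}(u^-)$, one would need $\ns{\phi}(u_{i-2})>\ns{\phi}(u^-)$, which is incompatible with $u_{i-2}\le u^-$ (itself forced by \ref{maximality}) because $\ns{\phi}$ attains its maximum on $[0,u^-]$ at $u^-$. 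The downward case is symmetric via \ref{inuguaglianza giu} and \ref{minimality}. Likewise, the possibility that two spinodal points are created simultaneously --- an upward transition at $i$ together with a downward one at $i+1$, which your first-time argument dismisses as a ``degenerate coincidence'' --- needs the same computation: \ref{inuguaglianza su} at $i$ with $u_{i+1}=u^+$ forces $\ns{\phi}(u_{i-1})>2\ns{\phi}(u^-)-\ns{\phi}(u^+)>\ns{\phi}(u^-)$, again impossible for $u_{i-1}\le u^-$. These are exactly the cases the paper's proof works through; you cite the strict inequalities only to exclude grazing contact at $u^\pm$, not to close these configurations. The gap is fixable with the tools you already invoke, but as written the proof does not establish the statement.
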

\begin{proof}
	Conditions \ref{maximality} and \ref{minimality} imply that if $u_i(t)$ and $u_{i+1}(t) \in (0, u^-)$ or if $u_i(t)$ and $u_{i+1}(t) \in (u^+, +\infty)$, then they cannot have a simultaneous phase transition.
	If both $u_i(t)$ and $u_{i+1}(t) \not \in (u^-, u^+)$, there cannot be an upwards phase transition at point $u_i(t)$ and a downward phase transition at point $u_{i+1}(t)$: otherwise, from \ref{inuguaglianza su} and \ref{inuguaglianza giu} we would have $\ns{\phi}(u_{i+1}(t)) > \ns{\phi}(u^-)$ or $\ns{\phi}(u_{i}(t)) < \ns{\phi}(u^+)$, against the necessity that $u_i(t) = u^-$ and $u_{i+1}(t) = u^+$.
	If $u_{i}(t) \in (u^-, u^+)$ and if $u_{i-1}(t)$ had an upwards phase transition, from \ref{inuguaglianza su} we would have $\ns{\phi}(u_{i-2}(t)) > \ns{\phi}(u^-)$, contradicting \ref{maximality}.
	If $u_i(t) \in (u^-, u^+)$ and if $u_{i+1}(t)$ had a downward phase transition, from \ref{inuguaglianza giu} we would have $\ns{\phi}(u_{i+2}(t)) < \ns{\phi}(u^+)$, against \ref{minimality}.
\end{proof}

Notice that Propositions \ref{riemann transitions} and \ref{interfaccia sottile} can be generalized to any piecewise S-continuous initial data taking values in $(0, u^-) \cup (u^+, +\infty)$: in this case, if the initial data has $n$ discontinuities, then $u_i(t) \in (u^-, u^+)$ for at most $n$ values of $i \leq N$.
In particular, if the initial data has finitely many discontinuities, then the dynamics of the system outside of the stable branches of $\phi$ is negligible.
In these cases, it could be argued by the above proposition that the phase transitions of $[u]$ trace a clockwise hysteresis loop, in agreement with the behaviour of two-phase solutions to \ref{mark} studied in \cite{evans survey, irreversibility, matete}.

We conclude our discussion of the Riemann problem with initial data \ref{riemann data} with a characterization of the asymptotic behaviour of the solution.

\begin{corollary}
	Let $u$ be the solution of system \ref{pb discreto} with initial data \ref{riemann data}.
	If $\ns{\phi}(p_l) > \ns{\phi}(p_r)$ then no phase transitions occur.
\end{corollary}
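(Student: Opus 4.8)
The plan is to reduce the statement to the characterisation in Proposition~\ref{riemann transitions}: an upward phase transition at time $t$ can only happen at an index $i$ with $u_i(t)=u^-$ for which inequality~\ref{inuguaglianza su} and the maximality condition~\ref{maximality} hold, and a downward phase transition only at an index $i$ with $u_i(t)=u^+$ for which~\ref{inuguaglianza giu} and~\ref{minimality} hold. I would rule out every one of these by showing that two properties are preserved along the flow of~\ref{pb discreto} for all $t\ge 0$: \emph{(i)} the phase profile of the Riemann data is frozen, i.e.\ $0\le u_i(t)\le u^-$ for $i\le n$ and $u_i(t)\ge u^+$ for $i>n$; and \emph{(ii)} the grid sequence $i\mapsto\ns{\phi}(u_i(t))$ is non-increasing.

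Both (i) and (ii) hold at $t=0$: (i) is exactly~\ref{riemann data}, and since $\ns{\phi}(u_i(0))$ equals $\ns{\phi}(\omega_1)$ for $i\le n$ and $\ns{\phi}(\omega_3)$ for $i>n$, the hypothesis $\ns{\phi}(\omega_1)>\ns{\phi}(\omega_3)$ gives (ii). I would then propagate the two invariants simultaneously. While (i) holds, Hypotheses~\ref{ipotesi di partenza} give $\ns{\phi}'(u_i)\ge 0$ for every $i$ (each $u_i$ lies in $[0,u^-]$ or in $[u^+,+\infty)$, where $\ns{\phi}'\ge 0$); writing $w_i=\ns{\phi}(u_i)$ and $d_i=w_i-w_{i+1}$, the functions $d_i$ satisfy a discrete parabolic system with non-negative coefficients $\ns{\phi}'(u_i)/\varepsilon^2$, so at any index where $d_i$ first hits $0$ one gets $\dot d_i=\varepsilon^{-2}\bigl(\ns{\phi}'(u_i)d_{i-1}+\ns{\phi}'(u_{i+1})d_{i+1}\bigr)\ge 0$ (with an obvious one-sided modification at the Neumann ends $i=0,N$), which keeps the $d_i$ non-negative and hence preserves (ii). Conversely, while (ii) holds, if some $u_i$ with $i\le n$ reaches $u^-$ then $w_i=\ns{\phi}(u^-)$, while $w_{i-1}\le\ns{\phi}(u^-)$ by (i) and $w_{i+1}\le w_i$ by (ii), so $\dot u_i=\varepsilon^{-2}(w_{i-1}-2w_i+w_{i+1})\le 0$; symmetrically, if some $u_i$ with $i>n$ reaches $u^+$ then $w_i=\ns{\phi}(u^+)$, $w_{i-1}\ge w_i$ by (ii) and $w_{i+1}\ge\ns{\phi}(u^+)$ by (i), so $\dot u_i\ge 0$. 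Thus no $u_i$ can leave its phase interval, and (i) is preserved. Running this comparison argument on the maximal subinterval of $\ns{[0,+\infty)}$ on which both invariants hold shows it is all of $\ns{[0,+\infty)}$.

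Once (i) and (ii) are established the conclusion is immediate. From (i), $u_0(t)\le u^-<u^+$, so no index satisfies~\ref{minimality} and a downward transition never occurs. Again from (i), $u_0(t),\dots,u_n(t)\le u^-$ while $u_{n+1}(t)\ge u^+>u^-$, so the only index satisfying~\ref{maximality} is $i=n$; but an upward transition at $u_n$ would force $u_n(t)=u^-$, whence $\ns{\phi}(u_{n-1}(t))\le\ns{\phi}(u^-)$ by (i) and $\ns{\phi}(u_{n+1}(t))\le\ns{\phi}(u_n(t))=\ns{\phi}(u^-)$ by (ii), contradicting~\ref{inuguaglianza su}. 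Hence no phase transition of either type ever occurs, which is the assertion.

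The delicate point is precisely this bootstrap: invariants (i) and (ii) are interdependent — the parabolic-comparison step that preserves (ii) needs the sign $\ns{\phi}'(u_i)\ge 0$, which is (i), while the moving-boundary estimate that preserves (i) needs the ordering (ii) — so they cannot be obtained one after the other but must be carried along together, which is why the argument has to be run on the maximal common interval of validity. A second, routine technicality is the one-sided form of $\lap$ at the Neumann endpoints; the sign computations above go through there verbatim.
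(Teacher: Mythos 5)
Your proposal is correct, and it reads the hypothesis the only sensible way (the ``$\ns{\phi}(\omega_2)$'' in the statement must be $\ns{\phi}(\omega_3)$, since $\omega_2$ does not appear in the Riemann data \ref{riemann data}); but it goes a noticeably longer and more complete route than the paper. The paper's own proof is a two-line appeal to Proposition \ref{riemann transitions} together with the observation that $\ns{\phi}(\omega_1)>\ns{\phi}(\omega_3)$ forces $u_n'(0)<0$ and $u_{n+1}'(0)>0$, i.e.\ at $t=0$ the two interface values are pushed away from $u^-$ and $u^+$ respectively; the persistence of this configuration for all later times is left implicit. You instead supply exactly that missing propagation: the pair of invariants --- (i) the phase profile of \ref{riemann data} is frozen, and (ii) $i\mapsto\ns{\phi}(u_i(t))$ is non-increasing --- is seeded at $t=0$ by the hypothesis and then carried forward by a discrete maximum-principle bootstrap, after which \ref{maximality}, \ref{minimality} and the strict inequality in \ref{inuguaglianza su} rule out every candidate transition. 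The interdependence you flag (the comparison step for (ii) needs the sign $\ns{\phi}'(u_i)\ge 0$ coming from (i), while the barrier estimate for (i) needs the ordering (ii)) is real, and your monotonicity invariant is doing genuine work: invariant (i) alone would not control $\ns{\phi}(u_{n+1})$, which can exceed $\ns{\phi}(u^-)$ when $u_{n+1}$ is large on the branch $(u^+,+\infty)$. The only technical point worth making explicit in a final write-up is that the ``first time $d_i$ hits zero'' argument is the usual weak-invariance (Nagumo-type) step, applied to the hyperfinite ODE system by transfer of the finite-dimensional statement; with that remark your argument is a self-contained and strictly more rigorous version of the paper's sketch.
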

\begin{proof}
	It is a consequence of \ref{inuguaglianza su} and \ref{inuguaglianza giu} of Proposition \ref{riemann transitions} and of the fact that $\ns{\phi}(p_l) > \ns{\phi}(p_r)$ implies $u'_n(0) < 0$ and $u'_{n+1}(0) > 0$.
\end{proof}

\begin{corollary}
	Let $[u]$ be the grid solution of problem \ref{mark} with initial data \ref{riemann data}.
	Then $[u]$ converges to an asymptotically stable state that is either constant or Riemann-shaped.
\end{corollary}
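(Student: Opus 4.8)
The plan is to combine the global existence and convergence results of Sections~\ref{ns models} and~\ref{properties} with the structural description of the Riemann dynamics obtained in Propositions~\ref{riemann transitions} and~\ref{interfaccia sottile}.

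First I would invoke Corollary~\ref{global ex}: the solution $u$ of system~\ref{pb discreto} with initial data~\ref{riemann data} is defined for all $t\ge 0$ and stays in the $\ns{}$compact set $\simplex^+(\ns{u_0})$. Under the hypothesis (used throughout Section~\ref{properties}) that the steady states are isolated in $\simplex^+(\ns{u_0})$, Proposition~\ref{limiti} then gives that $\ns{\lim_{t\to+\infty}}u(t)$ exists and equals a steady state $\steady$ of~\ref{pb discreto}; indeed the limit lies in $M$, and $M$ consists of steady states since $\ns{\phi}(\steady(x))$ is constant on $\dom$ precisely when $\lap\ns{\phi}(\steady)=0$.

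Next I would describe the shape of $\steady$. By Proposition~\ref{interfaccia sottile}, at every time $t$ at most one grid point lies in the spinodal interval $(u^-,u^+)$, so the limit $\steady$ has at most one component in $(u^-,u^+)$, that is $n_2\le 1$. Moreover, the maximality conditions~\ref{maximality} and~\ref{minimality} from the proof of Proposition~\ref{riemann transitions} keep the indices on the lower stable branch and those on the upper stable branch contiguous throughout the evolution, separated by at most the single spinodal site; hence $\steady$ assigns a value $\omega_1\in[0,u^-]$ to an initial segment $\{0,\dots,m\}$ and a value $\omega_3\in[u^+,+\infty)$ to $\{m+1,\dots,N\}$, with at most one interface site carrying an intermediate value $\omega_2$, and with $\phi(\omega_1)=\phi(\omega_3)$ (being a steady state). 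If one of the two blocks is empty this is the constant case; otherwise $\steady$ is Riemann-shaped. Finally, for asymptotic stability: if $\steady$ carries a spinodal value on its single interface site, replacing that one value by $\omega_1$ (or $\omega_3$) changes $\steady$ on exactly one grid point, hence produces a steady state $\steady'$ with $[\steady']=[\steady]$ and $n_2=0$; by Proposition~\ref{n_2 = 0} such a $\steady'$ is asymptotically stable, while if $\steady$ is the homogeneous state Proposition~\ref{poutpurri} gives the same conclusion. Thus $[u]$ converges to the equivalence class of an asymptotically stable steady state which is either constant or Riemann-shaped.

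The hard part will be the third step: upgrading the pointwise transition analysis of Propositions~\ref{riemann transitions} and~\ref{interfaccia sottile} into the global statement that the lower- and upper-phase regions stay contiguous with a single interface, so that no interleaved configuration of the form $\omega_1,\omega_3,\omega_1,\omega_3,\dots$ can arise as a limit from Riemann data. A secondary subtlety, handled by the one-grid-point modification above, is that "asymptotically stable" has to be read at the level of the equivalence class $[\,\cdot\,]$, since the single surviving spinodal site is $\leb$-negligible but need not make $\steady$ itself an asymptotically stable fixed point of~\ref{pb discreto}.
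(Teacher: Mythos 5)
Your proposal is correct, and it is considerably more explicit than the paper's own proof, which consists of two sentences: in the no-transition case the corollary is attributed to Proposition \ref{limiti} alone, and in the case with transitions to Proposition \ref{limiti} together with Proposition \ref{last stability}. The common core is the same — both arguments rest on Proposition \ref{limiti} to get convergence to a steady state in the invariant set where $\ns{\phi}$ is constant — but after that the routes diverge. The paper leans on Proposition \ref{last stability} (asymptotically \stable\ non-homogeneous steady states away from the degenerate values $u^-,u^+$ have $n_2=0$) and leaves entirely implicit why the limiting two-valued configuration is Riemann-shaped rather than an interleaved arrangement of $\omega_1$'s and $\omega_3$'s. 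You instead extract the shape directly from the dynamics: Proposition \ref{interfaccia sottile} gives $n_2\le 1$ in the limit, and the localization conditions \ref{maximality} and \ref{minimality} confine all transitions to the single interface, so the two stable-phase blocks remain contiguous for all time and hence in the limit. This is exactly the structural content the paper's citation of Proposition \ref{last stability} does not supply, and your flagging of it as the step requiring real work is accurate. Your secondary observation — that a limit with one spinodal site need not itself be an asymptotically \stable\ fixed point of \ref{pb discreto}, and that one should pass to the $\equiv$-equivalent steady state with $n_2=0$ (asymptotically \stable\ by Proposition \ref{n_2 = 0}, or by Proposition \ref{poutpurri} in the homogeneous case) — is a genuine refinement that the paper glosses over. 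What the paper's version buys is brevity and uniformity with the general asymptotic analysis of Section \ref{properties}; what yours buys is an actual proof of the Riemann shape and a correct reading of ``asymptotically stable'' at the level of $[\,\cdot\,]$. One small caveat: your argument, like the paper's, implicitly assumes the steady states are isolated in $\simplex^+(\ns{u_0})$ so that Proposition \ref{limiti} applies; it would be worth stating that hypothesis explicitly in the corollary.
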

\begin{proof}
	If no phase transitions occur, then the thesis is a consequence of Proposition \ref{limiti}.
	If phase transitions occur, this is a consequence of Proposition \ref{limiti} and of Proposition \ref{last stability}.
\end{proof}

\textbf{Acknowledgements}
We are grateful to Professor Vieri Benci for having suggested us the study of problem \ref{mark} with techniques from nonstandard analysis, and for the support he gave us during the writing of this paper.
Professors Imme van den Berg, Nigel Cutland, Todor T.\ Todorov and an anonymous referee also provided valuable feedback on the preliminary drafts of this paper.
The core of the paper was written while the author was affiliated to the University of Trento, Italy.


\begin{thebibliography}{99}
\bibitem{nsa theory apps} Leif O. Arkeryd, Nigel J. Cutland and C. Ward Henson (editors), \emph{Nonstandard analysis - theory and applications}, Kluwer Academic Publications, 1997.


\bibitem{balder} E.\ J.\ Balder, \emph{Lectures on Young measures}, Cahiers de mathématiques de la décision 9514, CEREMADE,
Université Paris-Dauphine (1995).

\bibitem{ball} J.\ M.\ Ball, \emph{A version of the fundamental theorem for Young measures}, PDEs and Continuum Models of Phase Transitions: Proceedings of an NSF-CNRS Joint Seminar Held in Nice, France, January 18--22, (1988), doi:10.1007/BFb0024945.

\bibitem{imme2} I.\ P.\ van den Berg, \emph{Discretisations of higher order and the theorems of Fa`a di Bruno and DeMoivre-Laplace}, Journal of Logic and Analysis 5:6 (2013) pp. 1--35. 

\bibitem{imme1} I.\ P.\ van den Berg, \emph{On the relation between elementary partial difference equations and partial differential equations}, Annals of Pure and Applied Logic 92 (1998), pp.\ 235--265.

\bibitem{ema2}  E.\ Bottazzi, \emph{Generalized solutions to partial differential equations}, in preparation.

\bibitem{da}
M. Davis,
\emph{Applied Nonstandard Analysis}, John Wiley \& Sons (1977).

\bibitem{demoulini} S. Demoulini, \emph{Young measure solutions for a nonlinear parabolic equation of forward-backward type}, SIAM J. Math. Anal., 27 (1996), pp. 376--403.

\bibitem{evans survey}L.\ C.\ Evans, \emph{A survey of entropy methods for partial differential equations}, Bulletin (New Series) of the American Mathematical Society, Volume 41 (2004), Number 4, pp. 409--438.

\bibitem{irreversibility}L.\ C.\ Evans, M.\ Portilheiro \emph{Irreversibility and hysteresis for a forward-backward diffusion equation}, Mathematical Models and Methods in Applied Sciences, Volume 14, Number 11, pp. 1599--1620.

\bibitem{riemann}
B.\ H.\ Gilding, A.\ Tesei, \emph{The Riemann problem for a forward-backward parabolic equation}, Physica D 239 (2010), pp.\ 291--311.

\bibitem{local}
M.\ Ghisi, M.\ Gobbino, \emph{A class of local classical solutions for the one-dimensional Perona-Malik equation}, Transactions of the American Mathematical Society, Volume 361 (2009), Number 12, pp. 6429--6446.


\bibitem{global}
M.\ Ghisi, M.\ Gobbino, \emph{An example of classical solution for the Perona-Malik equation}, Communications in Partial Differential Equations (2001), pp. 1318--1352.


\bibitem{go}
R.\ Goldblatt,
\emph{Lectures on the Hyperreals -- An Introduction
	to Nonstandard Analysis}, Graduate Texts in Mathematics,
vol. 188 (1998), Springer.


\bibitem{watt} F.\ Hanqiao, D.\ F.\ St.\ Mary and F.\ Wattenberg, \emph{Applications of nonstandard analysis to partial differential equations-I. The diffusion equation}, Mathematical Modelling, Vol. 7 (1986), pp. 507--523.

\bibitem{painter2} D.\ Horstmann, H.\ G.\ Othmer, K.\ J.\ Painter, \emph{Localization in Lattice and Continuum Models of Reinforced Random Walks}, Applied Mathematics Letters 16 (2003) pp. 375--381.

\bibitem{reinforcement} D.\ Horstmann, Hans G.\ Othmer, K.\ J.\ Painter, \emph{Aggregation under local reinforcement: from lattice to continuum}, European Journal of Applied Mathematics, vol 15 (2004), pp. 545--576.

\bibitem{characterization} D.\ Horstmann, B.\ Schweizer, \emph{A free boundary characterization of measure-valued solutions for forward-backward diffusion}, Advances in Differential Equations, Vol. 13 (2008), pp. 201--227.

\bibitem{ns stability} A.\ E.\ Hurd, \emph{Nonstandard analysis of dynamical systems. I. Limit motions, stability}, Transactions of the
American Mathematical Society 160 (1971), pp. 1--26.

\bibitem{keisler} H.\ J.\ Keiser, \emph{Elementary calculus -- An infinitesimal approach}, (2000).

\bibitem{sd} M.\ Lizana, V.\ Padron, \emph{A spatially discrete model for aggregating populations}, Journal of Mathematical Biology (1999).

\bibitem{nsa working math} P.\ A.\ Loeb, M.\ Wolff editors, \emph{Nonstandard analysis for the working mathematician}, Kluwer Academic Publishers (2000).

\bibitem{lux} W.\ A.\ J.\ Luxemburg, K.\ D.\ Stroyan, \emph{Introduction to the Theory of Infinitesimals}, Academic Press, New York (1976).

\bibitem{graph} U.\ Khan, \emph{Advanced control of complex dynamical systems}, lecture notes available at the page \url{http://www.eecs.tufts.edu/~khan/Courses/Spring2013/EE194/Lecs/Lec18and19.pdf}

\bibitem{paradox} S.\ Kichenassamy, \emph{The Perona--Malik Paradox}, SIAM Journal on Applied Mathematics, 57 (1997), pp. 1328–1342.

\bibitem{lamascia} P.\ Lafitte, C.\ Mascia, \emph{Numerical exploration of a forward-backward diffusion equation}, Mathematical Models and Methods in Applied Sciences, World Scientific Publishing, Vol. 22 (2012).

\bibitem{matete} C.\ Mascia, A.\ Terracina and A.\ Tesei, \emph{Evolution of Stable Phases in Forward–Backward Parabolic Equations}, Advanced Studies in Pure Mathematics (2005).

\bibitem{matete2} C.\ Mascia, A.\ Terracina and A.\ Tesei, \emph{Two-phase Entropy Solutions of a Forward–Backward Parabolic Equation}, Arch. Rational Mech. Anal. 194 (2009), pp. 887--925.

\bibitem{numerical1} A.\ R.\ Mitchell and D.\ F.\ Griffiths, \emph{The Finite Difference Method in Partial Differential Equations}, John Wiley \& Sons (1980), ISBN 0‐471‐27641‐3.

\bibitem{numerical2} K.\ Morton, and D.\ Mayers, \emph{Numerical Solution of Partial Differential Equations: An Introduction} Cambridge University Press (2005), doi:10.1017/CBO9780511812248.

\bibitem{stable patterns} A.\ Novick-Cohen, R.\ L.\ Pego, \emph{Stable patterns in a viscous diffusion equation}, Transactions of the American Mathematical Society, Volume 324 (1991), Number 1.

\bibitem{sobolev padron} V.\ Padron, \emph{Sobolev regularization of a nonlinear ill-posed parabolic problem as a model for aggregating populations}, Communications in Partial Differential Equations 23 (1998), pp. 457--486.

\bibitem{perona malik} P.\ Perona, J.\ Malik, \emph{Scale-Space and Edge Detection Using Anisotropic Diffusion}, Transactions on pattern analysis and machine intelligence, vol.\ 12 (1990).

\bibitem{plotnikov} P.\ I.\ Plotnikov, \emph{Passing to the limit with respect to viscosity in an equation with variable parabolicity direction}, Differential Equations 30 (1994), pp. 614--622.

\bibitem{sari} 	T.\ Sari. \emph{Stroboscopy and averaging}, A. Fruchard and A. Troesch (eds.), Colloque Trajectorien à la Mémoire de Georges Reeb et Jean-Louis Callot (Strasbourg-Obernai, 1995), pp. 95--124.

\bibitem{slemrod} M.\ Slemrod, \emph{Dynamics of measured valued solutions to a backward-forward heat equation}, Journal of Dynamics and Differential Equations (1991).

\bibitem{smarrazzo} F.\ Smarrazzo, \emph{On a class of equations with variable parabolicity direction}, Discrete and continuous dynamical systems 22 (2008), pp.\ 729--758.

\bibitem{strichartz} R.\ Strichartz, \emph{A guide to distribution theory and Fourier Transforms}, CRC Press (1994).


\bibitem{tartar} L.\ Tartar, \emph{An introduction to Sobolev spaces and interpolation spaces}, Springer-Verlag Berlin Heidelberg (2007).

\bibitem{non-uniqueness} A.\ Terracina, \emph{Non-uniqueness results for entropy two-phase solutions of forward–backward parabolic problems with unstable phase}, Journal of Mathematical Analysis and Applications 413 (2014), pp.\  963--975.

\bibitem{webbym} M.\  Webb, \emph{Classical  Young  Measures  in  the  Calculus  of  Variations},
\url{http://www.damtp.cam.ac.uk/user/mdw42/webbyoungmeasures.pdf} (2013).

\bibitem{discrete 2} T.\ P.\ Witelski, David G.\ Schaeffer, Michael Shearer, \emph{A discrete model for an ill-posed nonlinear parabolic PDE}, Physica D 160 (2001), pp. 189-221.


\end{thebibliography}
\end{document}